\newtheorem{theorem}{Theorem}
\newtheorem{claim}[theorem]{Claim}
\newtheorem{corollary}[theorem]{Corollary}
\newtheorem{definition}[theorem]{Definition}
\newtheorem{lemma}[theorem]{Lemma}
\theoremstyle{remark}
\newtheorem{remark}[theorem]{Remark}
\newcommand{\R}{\mathbf{R}}
\newcommand{\pd}{ {\partial}}
\newcommand{\lin}{\mathcal{L}}
\newcommand{\el}{\mathcal{E}}
\newcommand{\eps}{\varepsilon}
\newcommand{\Uplus}{U_1+\eps^2 v_a}
\newcommand{\OmRN}{\Omega_{R, N}}
\newcommand{\rrt}{_{R, R_1, \tau_1}}
\begin{document}

\title[Construction of  Self-Similar Surfaces under MCF II]{Construction of Complete Embedded Self-Similar Surfaces under Mean Curvature Flow. Part II. }
\author{Xuan Hien Nguyen}
\address{Department of Mathematical Sciences, University of Cincinnati, Cincinnati,
OH, 45221} 
\email{hien.nguyen@uc.edu}
\subjclass[2000]{Primary 53C44}
\keywords{mean curvature flow, self-similar, singularities}

\begin{abstract} We study the Dirichlet problem associated to the equation for self-similar surfaces for graphs over the Euclidean plane with a disk removed. We show the existence of a solution provided the boundary conditions on the boundary circle  are small enough and satisfy some symmetries.  This is the second step towards the construction of new examples of complete embedded self similar surfaces under mean curvature flow.
\end{abstract}

\maketitle

\section{Introduction}

This paper is the second one of a series of three articles describing the construction of new examples of complete embedded self-similar surfaces under mean curvature flow \cite{mine;part1}\cite{mine;part3}.  Our general strategy is inspired by  Kapouleas'  article  \cite{kapouleas;embedded-minimal-surfaces} and is described in detail in the previous installment. 
Let us recall it briefly:  to construct a new self-similar surface, we take two known examples suitably positioned and replace a neighborhood of their intersection with an appropriately bent scaled Scherk's singly periodic surface.  The procedure is called desingularization. 

The resulting surface is not smooth; however it is a good approximate solution. The next task is to find  a small function whose graph over it satisfies the self-similar surface equation
	\begin{equation}
	\label{eq:self-shrinker}
	H+X\cdot \nu =0,
	\end{equation}
where $H$ is the mean curvature and $\nu$ is the normal vector so that the mean curvature vector is $\mathbf{H}=H \nu$. The sign of $H$ is chosen so that the mean curvature of a convex surface is positive. 	Before considering graphs of functions on the entire surface, we have to work locally and study the Dirichlet problems with small boundary conditions  corresponding to \eqref{eq:self-shrinker}  for graphs of functions on the different pieces. 

In the first part \cite{mine;part1}, we found that a perturbation of an appropriately scaled and bent Scherk surface satisfies \eqref{eq:self-shrinker} and can be used to desingularize the intersection of a cylinder and a plane or a sphere and a plane. In this article, we study the Dirichlet problem  corresponding to \eqref{eq:self-shrinker}  for  functions on the outer plane, which is the Euclidean plane with a disk removed. We show that the Dirichlet problem possesses a solution if the boundary conditions on the circle are small enough. Our methods here are different from the one used in Part I. Since the equation \eqref{eq:self-shrinker} can be written with a single set of coordinates on the outer plane, our tools come mainly from  partial differential equation theory, with some interior estimates established by Ecker and Huisken \cite{ecker-huisken;interior-estimates} for the Mean Curvature Flow. The third article is to discuss the gluing of the solutions on the different pieces in a manner to obtain a smooth complete embedded self-similar surface.

\subsection{Main result} 
Let $2> R> \sqrt 3 /2$ and $N\geq5$, consider a function $f \in C^4([0, 2\pi])$ satisfying the symmetries
	\begin{equation}
	\label{eq:symmetries-f}
	f(\theta) = -f(-\theta) = f(\pi/N - \theta).
	\end{equation}

	\begin{theorem}
	\label{thm:main-1}
There is an $\eps_0 >0$ depending on $R$ and $N$ such that for any $f: [0, 2 \pi] \to \R$ with $\| f \|_{C^4[0, 2 \pi]}=\eps \leq \eps_0$ and $f$ satisfying the symmetries above, there is a function $u$ on $\Omega = \R^2\setminus \bar B_R$ such that 
	\begin{gather}
	\textrm{ the graph of $u$ over $\Omega$ satisfies $H + X\cdot \nu =0$} \label{eq1}\\
	u =f \textrm{ on } \pd B_R, \label{eq:boundary-condition}\\
	u(r, \theta)=-u(r, -\theta) = u(r, \pi/N -\theta) \text{ for } r >R, \theta \in [0, 2\pi)\label{eq:symmetries-u}
	\end{gather}
In addition, We can choose the constant $\eps_0$ uniformly for all $R$ such that  $2>R>\sqrt 3/2$.
	\end{theorem}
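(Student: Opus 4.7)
The plan is to reformulate \eqref{eq1} as a quasilinear elliptic equation on $\Omega$ and then solve by combining a Fredholm linear theory for the Jacobi-type operator with a contraction mapping on a space of symmetric functions. For a graph $X=(x,y,u(x,y))$ the equation $H+X\cdot\nu=0$, after multiplying by $\sqrt{1+|\nabla u|^2}$, becomes
\[
\mathcal{Q}[u] := L[u] + \mathcal{N}[u] = 0, \qquad L[u] := \Delta u - x\cdot\nabla u + u,
\]
where $\mathcal{N}[u] = -u_i u_j u_{ij}/(1+|\nabla u|^2)$ is higher order in $u$ and its derivatives. I then seek a small symmetric $u$ solving $L[u]=-\mathcal{N}[u]$ on $\Omega$ with $u|_{\pd B_R}=f$.

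The central step is a linear theory for $L$ on $\Omega$ in a symmetric weighted H\"older space, with estimate constants uniform in $R\in(\sqrt 3/2,2)$. I would use a Gaussian weight $e^{|x|^2/4}$, adapted to the drift $-x\cdot\nabla$, under which $L$ becomes Fredholm of index zero. On $\R^2$, $\ker L$ is spanned by the coordinate functions $x,y$, corresponding to angular modes $|k|=1$. The symmetries \eqref{eq:symmetries-u} force the $\theta$-Fourier expansion of $u$ to contain only modes $\sin(kN\theta)$ with $k$ odd, so every surviving angular frequency is $\geq N\geq 5$, hence disjoint from the kernel. On the symmetric subspace $L$ is therefore invertible; separating variables yields, for each surviving mode, a two-point boundary value problem on $(R,\infty)$ whose unique Gaussian-decaying solution can be bounded $R$-uniformly by standard ODE techniques. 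Summing over modes gives an estimate
\[
\|u\|_\ast \leq C\bigl(\|f\|_{C^{4}(\pd B_R)}+\|L u\|_\dagger\bigr),
\]
with $C$ bounded uniformly over the compact range $R\in(\sqrt 3/2,2)$.

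With the linear theory in place, I would close the argument by contraction. Define $\mathcal{T}$ on a small closed ball $\{w:\|w\|_\ast\leq M\eps\}$ of the symmetric weighted H\"older space by letting $\mathcal{T}(w)$ be the unique symmetric solution of $L[v]=-\mathcal{N}[w]$, $v|_{\pd B_R}=f$. Since $\mathcal{N}$ is higher order, $\|\mathcal{N}[w]\|_\dagger \lesssim \|w\|_\ast^2$, and $\mathcal{N}$ is locally Lipschitz on the ball, so for $\eps\leq\eps_0$ small enough $\mathcal{T}$ maps the ball into itself and is a strict contraction; its fixed point is the desired $u$. If one prefers to avoid the unbounded domain at first, the same fixed point argument can be carried out on truncated annuli $\OmRN=B_N\setminus\bar B_R$ with vanishing data on $\pd B_N$: the uniform weighted bounds together with the interior estimates of Ecker--Huisken \cite{ecker-huisken;interior-estimates} provide enough compactness to extract a limit $u_N\to u$ as $N\to\infty$, with the symmetries \eqref{eq:symmetries-u} preserved throughout because the whole construction respects them.

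The principal obstacle is the linear theory: choosing a weighted H\"older pair $(\|\cdot\|_\ast,\|\cdot\|_\dagger)$ in which, simultaneously, (i) $L$ is an isomorphism on the symmetric subspace, (ii) the estimate constant is uniform in $R\in(\sqrt 3/2,2)$, and (iii) the remainder $\mathcal{N}$ is controlled in the weaker norm by the square of the stronger one. The drift $-x\cdot\nabla$ dictates Gaussian decay for admissible solutions, but this decay must be matched to the $C^4$ regularity of the boundary data on $\pd B_R$, and the modal ODE analysis needed to obtain $R$-uniform inverses is the technical heart of the argument.
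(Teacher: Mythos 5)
Your route — contraction mapping on a symmetric weighted Hölder space — is genuinely different from the paper's. The paper does not carry out a fixed-point argument in function spaces. Its strategy is: (1) solve the linear Dirichlet problem for $\lin = \Delta - \xi\cdot D + 1$ using a Poincar\'e inequality (modes $\geq N$) and a maximum principle built from $\psi = r\cos\theta$; (2) construct explicit sub/supersolutions $u_\pm = u_1 \pm K_1^3\eps^3 k\bigl(r - \tfrac{R^2}{r}\bigr)$ to the quasilinear equation; (3) run the parabolic flow $\pd_\tau u = \el u$ with $u_\pm$ as barriers, invoking Ecker--Huisken interior estimates away from $\pd\Omega$ and parabolic Schauder theory near it to show long-time existence; (4) use a Huisken-type monotonicity to extract a limit solving $\el u = 0$; (5) prove uniqueness (via a positive supergrowth solution $\psi = r^\alpha - \tfrac{3}{4}r^{\alpha-2}$) to recover the symmetries. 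The parabolic-flow-with-barriers approach buys a clean treatment of the unbounded domain without ever inverting the linearization in a H\"older space; your fixed-point approach, if the weighted spaces can be set up, would be more direct and would also deliver uniqueness for free from the contraction.

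There are, however, several concrete gaps you should address. First, the phrase ``Gaussian-decaying solution'' is misleading: the solution to the Dirichlet problem does not decay, it grows linearly ($|U_1(\xi)| \leq 3K_0|\xi|$), and this is compatible with $L^2(\Omega, e^{-|\xi|^2/2}d\xi)$. If you conjugate by $e^{-|\xi|^2/4}$ the conjugated function decays, but then the H\"older pair $(\|\cdot\|_\ast,\|\cdot\|_\dagger)$ must accommodate $u\sim r$, $Du$ bounded, $D^2 u \sim r^{-1}$ in unconjugated variables, and the crucial mapping property is precisely that $\lin^{-1}$ of a function $\sim r^{-1}$ has linear growth plus $r^{-1}$ corrections — exactly the $v_a = a(r - R^2/r)$ structure the paper exploits. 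This is the step you have flagged as ``the technical heart'' and left open; without it the contraction cannot close. Second, the paper's constraint $R > \sqrt 3/2$ (more generally $R > 1/\sqrt 2$) is not a blemish but enters substantively: the sign of $1-2R^2$ is what makes $v_a$ a valid supersolution correction, and a similar constraint appears in the uniqueness supersolution. Your argument as written never encounters this constraint, which means either you are implicitly proving a stronger result (possible, but then you should say so and justify it) or the constraint is hiding inside the weighted-space mapping bounds and will resurface there. Third, the truncation-and-limit plan on annuli $B_{R_1}\setminus\bar B_R$ requires $(R,R_1)$-uniform constants, not just $R$-uniform, and the Ecker--Huisken estimates you cite are estimates for the flow, not for static elliptic solutions — the paper makes them apply by observing that a static solution of $\el u = 0$ generates a self-shrinking MCF solution $v(x,t)=\sqrt{2(1-t)}\,u(x/\sqrt{2(1-t)})$; you should make the same identification explicit.
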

Here,  $u=f$ on $\pd B_R$ means that $u(R, \theta) = f(\theta)$ in polar coordinates. This is a slight abuse of notation which does not induce confusion. Note that the conditions \eqref{eq:symmetries-u} imply that $u$ is odd with respect to rotations of $180$ degrees with respect to the $x$-axis and even with respect to reflections across the planes $\theta = \pi/2N+ k \pi/N$, which are exactly the symmetries imposed in Part I \cite{mine;part1}.

Computing the mean curvature and the normal vector $\nu$ of the graph of $u$ using coordinates in $\R^2$, we find that \eqref{eq1} is equivalent to 
	\begin{equation}
	\label{eq:quasilinear}
	\el(u)=g^{ij}(Du(\xi))D_{ij}u(\xi) - \xi \cdot D u(\xi) + u(\xi) = 0, \quad \xi \in \Omega,
	\end{equation}
where $g^{ij}(Du) = \delta^{ij} - \frac{D_i u D_j u }{1+|Du|^2}$. This is our main equation. Note that our domain is unbounded and that the coefficient in front of $u$ is positive, so the standard maximum principle is not applicable directly. 

We first study the  properties of the linear operator $\lin$ associated to $\el$. From solutions to the linear problem, we then construct a sub and supersolution to the quasilinear equation $\el u =0$.

The solution to the elliptic equation \eqref{eq:quasilinear} is found as a limit for time $\tau$ going to infinity of a solution $u$ to the parabolic equation $\pd_{\tau} u =\el u$. For initial conditions with bounded gradient, standard theory on parabolic equations assures the existence of a solution to the initial value problem on a short time interval $[0, \tau_0)$. The constructed sub and supersolution serve as barriers for solutions of the parabolic equation $\pd_{\tau} u =\el u$ and allow us to control the growth of $u$ at infinity in the space variable for all time. Exploiting the close relation  between the parabolic equation $\pd_{\tau} u =\el u$ and the mean curvature flow, we use interior estimates from the latter to bound derivatives of our solution to the former away from the boundary. Loosely speaking, this gives us control of the derivatives outside of  a bounded annulus around $\pd \Omega$. 
To bridge the gap, we invoke standard parabolic theory on bounded sets and obtain uniform estimates for derivatives of $u$ for all $\tau \in [0, \tau_0)$. This implies that the solution has to exists for all time $\tau \in [0, \infty)$. We conclude that a subsequence of $u(\cdot, \tau)$  tends to a solution of $\el u=0$ as $\tau$ goes to infinity by proving a monotonicity formula. 

Some sections  of this paper appeared in the author's thesis, written under the direction of Sigurd Angenent at the University of Wisconsin-Madison. The author is indebted to him for invaluable discussion.

%
\section{Definitions}
\label{sec:definitions}

\subsection{Mean curvature flow}
The properties of  solutions to \eqref{eq1} are closely related to properties of solutions to the mean curvature flow (MCF), which will be used intensively. Let us therefore define it.

Let $X(\cdot, t): M^2 \to \R^3$ be a one parameter family of immersions of $2$-dimensional smooth hypersurfaces in $\R^3$. We say that $M_t = X(M^2, t)$  is a solution to the mean curvature flow if
	\begin{gather}
	\frac{d}{dt} X(p, t) = \mathbf{H}(p, t), \quad p \in M, t>0 \label{eq:mcf1}\\
	X(p, 0) = X_0 (p)\nonumber
	\end{gather}
is satisfied for some initial data $X_0$. Here $\mathbf{H}(p, t)$ is the mean curvature vector of the hypersurface $M_t$ at $X(p, t)$. 

If the hypersurfaces $M_t$ can be written locally as graphs of a function $v(\cdot, t)$ over a domain in the $xy$- plane, the quasilinear equation
	\begin{equation}
	\label{eq:mcf}
	\frac{d}{dt} v = g^{ij}(Dv) D_{ij}v, \quad g^{ij}(Dv) = \delta_{ij}-\frac{D_{i}v D_{j}v}{1+|D v|^2}
	\end{equation}
is equivalent to \eqref{eq:mcf1} up to tangential diffeomorphisms.

\subsection{Weighted Hilbert Sobolev spaces $H$, $V$ and $V_0$}

We consider the Hilbert space
\[
H= L^2(\Omega, dm(\xi)),
\]
where $\Omega$ is an open subset of $\R^n$ (not necessarily bounded)
and $m$ is the Gaussian measure
$
dm(\xi) = e^{-\frac12|\xi|^2}d\xi.
$
We write $\|f\|_H$ and $(f, g)_H$ for the norm and inner product of $H$. We also define the Hilbert space
\[
V= \{ f\in H : \pd_if\in H \text{ for }i=1,\ldots,n\}.
\]
The inner product of $V$ is
$
(f,g)_V = \int_{\Omega}\{fg+D f\cdot D g\} dm(\xi).
$
Let $V_0$ be the closure of $C^\infty_c(\Omega)$ in $V$. Even when $\Omega$ is unbounded the inclusion $V_0\subset H$ is
compact. 

\section{The Linear Operator}
\label{sec:linear-operator}

Let us define sections $\Omega_{R, N}$ of the outer plane by
	\begin{equation}
	\Omega_{R, N} = \{ (r, \theta) : R<r , - \pi/N < \theta < \pi/N \} \label{eq:OmRN}
	\end{equation}
and their corresponding Hilbert spaces
	\begin{align*}
	V_{0; Sym}(\Omega_{R,N})  &= \{ u \in V_0(\Omega_{R, N}) \mid u(r, \theta) = -u(r, -\theta) = u(r, \pi/N - \theta)\} \\
	H_{Sym}(\Omega_{R, N}) &= \{ u \in H(\Omega_{R, N}) \mid u(r, \theta) = -u(r, -\theta) = u(r, \pi/N - \theta)\} 
	\end{align*}
In this section, we study the properties of $\lin$, the linear operator associated to $\el$ defined in \eqref{eq:quasilinear},
 	\[
	\lin (u) = \Delta u- \xi \cdot D u+ u.
	\]


\subsection{Eigenvalues and eigenfunctions}
\label{ssec:eigenvalues}
Consider the operator 
	\[
	\lin_-(u)=\Delta u- \xi \cdot D u- u
	\]
on $\Omega_{0, N}$. The compactness of the inclusion $V_0(\Omega_{0, N}) \subset H(\Omega_{0, N})$ yields the following theorem:

\begin{theorem}
\label{thm:countable-eigenvalues}
The operators $\lin_-$ and $\lin$ have a countable set of eigenvalues having no limit point except possibly $\lambda = \infty$ and a corresponding $H$-orthogonal basis of eigenfunctions $\varphi_i \in V_0(\Omega_{0, N})$. 
\end{theorem}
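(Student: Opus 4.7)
The plan is to recognize $\lin_-$ as a negative, formally self-adjoint operator with respect to the Gaussian inner product on $H$, whose inverse is a compact self-adjoint operator on $H$; the spectral theorem will then supply the eigenbasis for $\lin_-$, and the statement for $\lin$ will follow from $\lin u = \lin_- u + 2u$ by a constant shift.

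First I would rewrite $\Delta u - \xi\cdot Du = e^{|\xi|^2/2}\,\mathrm{div}(e^{-|\xi|^2/2}\,Du)$, so that for $u,v\in C_c^\infty(\Omega_{0,N})$ the Gaussian weight kills any boundary contribution in an integration by parts:
\[
-(\lin_- u,v)_H = \int_{\Omega_{0,N}} (Du\cdot Dv + uv)\,dm = (u,v)_V.
\]
By density this extends to $u,v\in V_0(\Omega_{0,N})$. The bilinear form $a(u,v):=(u,v)_V$ is symmetric, continuous, and coercive on $V_0$ (in fact $a(u,u)=\|u\|_V^2$), so by the Lax-Milgram theorem, for each $f\in H$ there is a unique $u=Tf\in V_0$ with $a(u,v)=(f,v)_H$ for every $v\in V_0$. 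Composing $T\colon H\to V_0$ with the compact inclusion $V_0\hookrightarrow H$ recalled in Section~\ref{sec:definitions} will make $T\colon H\to H$ compact; symmetry of $a$ yields $T^*=T$, and $Tf=0$ forces $(f,v)_H=0$ for all $v\in V_0\supset C_c^\infty(\Omega_{0,N})$, hence $f=0$, so $T$ is injective.

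Then I would apply the spectral theorem for compact self-adjoint operators on the separable Hilbert space $H$: injectivity of $T$ gives $\overline{\mathrm{Range}(T)}=H$, so there is an $H$-orthonormal basis of eigenfunctions $\{\varphi_i\}\subset V_0(\Omega_{0,N})$ with nonzero real eigenvalues $\mu_i$, accumulating only at $0$. Unwinding the Lax-Milgram identity shows that each $\varphi_i$ is a weak (hence, by elliptic regularity, classical) solution of $-\lin_-\varphi_i = \mu_i^{-1}\varphi_i$, so the eigenvalues of $\lin_-$ have no finite accumulation point. Finally $\lin u = \lin_- u + 2u$ delivers the same eigenbasis for $\lin$ with eigenvalues shifted by $+2$. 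The only step requiring real care is verifying that the Gaussian decay genuinely eliminates boundary terms at infinity in the integration by parts; this reduces to the density of $C_c^\infty(\Omega_{0,N})$ in $V_0(\Omega_{0,N})$ that is built into the definition of $V_0$. Apart from this, the argument is entirely standard once the compact embedding $V_0\subset H$ is in hand, and I do not foresee any further analytic obstacle.
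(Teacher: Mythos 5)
Your argument is correct and fills in precisely what the paper's one-line proof ("theory on compact bounded self-adjoint operators") leaves implicit: the Gaussian integration by parts identifying $-(\lin_- u,v)_H=(u,v)_V$, the Lax--Milgram construction of the compact self-adjoint inverse via the compact embedding $V_0\subset H$, the spectral theorem, and the shift $\lin=\lin_-+2$. This is the same approach, just spelled out.
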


\begin{proof} Theory on compact bounded self-adjoint operators in Hilbert spaces gives us the result for $\lin_-$. The result for $\lin$ then  follows immediately.
\end{proof}



Consider eigenfunctions of the form $f(r) g(\theta)$ satisfying the symmetries
	\begin{equation}
	\label{eq:sym}
	g(\theta) = -g(-\theta) = g(\pi/N - \theta).
	\end{equation}
There is an eigenvalue $\lambda \in \R$ such that 
	\[
	\lin( f(r) g(\theta)) = f_{rr} g+ \frac{1}{r} f_r g + \frac{1}{r^2} f g_{\theta\theta} - r f_r g +fg = \lambda fg. 
	\]
Separation of variables and the symmetry conditions  \eqref{eq:sym} on $g$ imply that the only possibilities for $g$ are $g= C \sin (m\theta)$ with $m=kN$, $ k \in \mathbf{N}$. Therefore
	\begin{gather*}
	g_{\theta\theta} =- m^2 g\\
	\lin_0(f) =  f_{rr}+\frac{1}{r}f_r-r f_r+(1-\lambda) f - \frac{1}{r^2}m^2 f =0.
	\end{gather*}
We have
	\[
	\lin_0 (r^n) = (n^2 -m^2) r^{n-2} + (1 -\lambda - n) r^n.
	\]
$\lin_0$ is a linear map on the vector space $V=\{ r^m F(r^2) \mid F \textrm{ polynomial } \} $ and it is upper diagonal in the basis $\{ r^m, r^{m+2}, r^{m+4}, \ldots \}$. The operator $\lin_0$ has a zero eigenvalue if one of the entry on the diagonal vanishes, i.e. if $1-\lambda-(m+2l) =0$. 	We therefore found some of the eigenvalues of our original operator $\lin$,
	\begin{equation}
	\label{eq:eigenvalues}
	\lambda = 1-(kN+2l), \quad k \in \mathbf{N}, l \in \{ 0\} \cup \mathbf N,
	\end{equation}
where $\mathbf N =\{ 1, 2, \ldots \}$ is the set of natural numbers. For each value of $k, l$ the corresponding eigenfunction is of the form 
	\begin{equation}
	\label{eq:eigenfunctions}
	r^{kN} P_{k, l}(r^2) \sin(k N \theta)
	\end{equation}
where $P_{k, l}$ is a polynomial of degree $l$. \\


We now show that the eigenvalues values given by  \eqref{eq:eigenvalues} are in fact all the possible eigenvalues of $\lin$.  This is done by proving the following theorem:

\begin{theorem}
\label{thm:completeness}
The set $S$ of eigenfunctions \eqref{eq:eigenfunctions} corresponding to the eigenvalues \eqref{eq:eigenvalues} forms a basis for  the Hilbert space $H_{Sym}(\Omega_{0, N})$.
\end{theorem}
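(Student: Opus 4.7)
The strategy is to separate variables and reduce the completeness of $S$ to two classical density statements: completeness of the admissible Fourier sine modes on the angular interval, and completeness of associated Laguerre polynomials in a weighted $L^2$ space on the half-line. First I would decompose functions in $H_{Sym}(\Omega_{0,N})$ into their angular and radial parts. The subspace of $L^2((-\pi/N, \pi/N))$ cut out by $g(\theta) = -g(-\theta) = g(\pi/N - \theta)$ has an orthogonal basis consisting of the Fourier modes $\sin(m\theta)$ respecting both symmetries; the separation-of-variables argument preceding the theorem identifies these as $m = kN$. Hence every $u \in H_{Sym}$ admits a unique expansion $u(r,\theta) = \sum_k u_k(r) \sin(kN\theta)$ that converges in $H_{Sym}$, and by Fubini each $u_k$ lies in $L^2((0,\infty), r e^{-r^2/2}\,dr)$. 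It therefore suffices to show that for each admissible $k$, the radial eigenfunctions $\{r^{kN} P_{k,l}(r^2)\}_{l \geq 0}$ form a complete orthogonal system in this radial Hilbert space.

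Next I would identify the radial eigenfunctions explicitly. Substituting $f(r) = r^{kN} h(r^2/2)$ into the radial eigenvalue ODE $\lin_0 f = \lambda f$ and changing variables to $t = r^2/2$ reduces it to Kummer's confluent hypergeometric equation
\begin{equation*}
t h''(t) + (kN + 1 - t) h'(t) + \tfrac{1}{2}(1 - kN - \lambda) h(t) = 0,
\end{equation*}
whose polynomial solutions of degree $l$ are the associated Laguerre polynomials $L_l^{(kN)}$, corresponding precisely to $\lambda = 1 - kN - 2l$ as in \eqref{eq:eigenvalues}. Under $t = r^2/2$ the radial measure $r e^{-r^2/2}\,dr$ pulls back, after absorbing $r^{2kN}$ from $|f|^2$, to a constant multiple of $t^{kN} e^{-t}\,dt$, so completeness of $\{r^{kN} P_{k,l}(r^2)\}_l$ in the radial Hilbert space is equivalent to completeness of $\{L_l^{(kN)}\}_l$ in $L^2((0,\infty), t^{kN} e^{-t}\,dt)$.

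The latter is classical: the moments of $t^{kN} e^{-t}\,dt$ satisfy Carleman's condition, so polynomials are dense in this weighted space, and the Laguerre polynomials are an orthonormalization of the monomial basis. Combining the angular and radial decompositions via Parseval on the product measure then assembles $S$ into an orthogonal basis of $H_{Sym}(\Omega_{0,N})$. I expect the main obstacle to be not any single density result, all of which are standard, but rather the careful bookkeeping: verifying the algebraic reduction of the radial ODE to Kummer's equation, matching $P_{k,l}$ to a rescaled $L_l^{(kN)}$, and justifying the Fubini/Parseval exchange across the symmetry-adapted angular basis and the radial substitution.
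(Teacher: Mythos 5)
Your proof is correct and follows essentially the same route as the paper: expand in the symmetry-adapted Fourier modes $\sin(kN\theta)$, reduce to density of the radial polynomials in the weighted $L^2$ space on the half-line, and invoke a classical completeness criterion for orthogonal polynomials. The only departures are cosmetic: you explicitly identify the $P_{k,l}$ with associated Laguerre polynomials $L^{(kN)}_l$ via Kummer's equation and appeal to Carleman's condition, whereas the paper keeps the $P_{k,l}$ abstract (noting only their orthogonality as eigenfunctions of distinct eigenvalues) and cites the exponential-moment criterion from Sz.-Nagy ($\int e^{r|x|}\,d\mu<\infty$ for some $r>0$ implies density); both criteria apply directly to $\rho^{kN}e^{-\rho/2}\,d\rho$.
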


Before we start the proof,  note that for each fixed $k \in \mathbf N$ and $s \neq l$,
	\[
	(r^{kN} P_{k, l} \sin(kN\theta) , r^{kN} P_{k, s} \sin(kN\theta))_{H(\Omega_{0, N})} =0
	\]
since they are eigenfunctions corresponding to different eigenvalues. Hence,
	$
	\int_{r=0}^{\infty}  P_{k, l} (r^2) P_{k, s}(r^2) r^{2kN} e^{-r^2/2} r \ dr =0.
	$
By the change of variable $\rho = r^2$,
	\begin{equation}
	\int_{\rho =0}^{\infty} P_{k, l}(\rho) P_{k, s}(\rho) \rho^{kN} e^{-\rho/2} d\rho =0 \label{P-orthogonal}.
	\end{equation}
The polynomials $P_{k, l}$, which are closely related to the general Laguerre polynomials, are therefore orthogonal in $L^2(0, \infty; \rho^{kN} e^{-\rho/2} d\rho)$,  the  $L^2$-space weighted by  $\rho^{kN} e^{-\rho/2} d\rho$. In addition, they span the functions $\rho^n$, $n \in \mathbf N$. The fact that  $\{P_{k, l} \}_{l \in \{0\} \cup \mathbf N} $ is a maximal orthogonal set in $L^2(0, \infty; \rho^{kN}e^{-\rho/2}d\rho)$
follows from the theorem p. 333 in \cite{sznagy;orthogonal-expansions}  which is recalled below. 

\begin{theorem}[Theorem p. 333 \cite{sznagy;orthogonal-expansions}] 
\label{thm:bela}
The orthonormal sequence of polynomials attached to a mass distribution function $\mu(x)$ on the interval $(a, b)$, is complete in $L^2(a, b; \mu)$ whenever  there exists a number $r>0$ such that the integral 
	\[
	\int_a^b e^{r|x|} d\mu(x) 
	\]
exists.
\end{theorem}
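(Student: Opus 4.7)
The plan is to prove completeness by showing that the $\mu$-orthogonal complement of the polynomials in $L^2(a,b;\mu)$ is trivial. First I would note that the hypothesis $\int_a^b e^{r|x|} d\mu(x) < \infty$, combined with $e^{r|x|}\geq 1$, forces $\mu$ to be a finite measure and every polynomial to lie in $L^2(a,b;\mu)$, so Gram--Schmidt on $\{1, x, x^2, \ldots\}$ yields a well-defined orthonormal sequence $\{p_n\}$. Completeness of this sequence will follow once I show: if $f \in L^2(a,b;\mu)$ satisfies $\int_a^b f(x) x^n \, d\mu(x) = 0$ for every integer $n \geq 0$, then $f=0$ $\mu$-a.e.

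Given such an $f$, I would associate the transform
\[
F(z) = \int_a^b f(x)\, e^{izx} d\mu(x), \quad z \in \mathbf{C}.
\]
Using the bound $|e^{izx}|^2 \leq e^{2|\operatorname{Im} z|\,|x|}$ and Cauchy--Schwarz against $f\in L^2(\mu)$, the integral converges absolutely whenever $|\operatorname{Im} z| < r/2$. Differentiating formally under the integral gives $\int_a^b ix f(x) e^{izx} d\mu(x)$; the extra factor of $|x|$ is absorbed by the estimate $|x|\leq C_\delta e^{\delta|x|}$ for arbitrarily small $\delta>0$, so the differentiation is legitimate by dominated convergence. This shows $F$ is holomorphic on the strip $S_{r/2} = \{|\operatorname{Im} z| < r/2\}$.

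Next I would exploit the moment hypothesis to show $F\equiv 0$ on $S_{r/2}$. Expanding $e^{izx}$ in its Taylor series and applying Fubini---justified for $|z| < r/2$ by
\[
\int_a^b \sum_{n\geq 0} \frac{|z|^n |x|^n}{n!} |f(x)|\,d\mu(x) = \int_a^b e^{|z|\,|x|} |f(x)|\,d\mu(x) < \infty,
\]
which is finite by another Cauchy--Schwarz step against the exponential moment---one obtains
\[
F(z) = \sum_{n=0}^{\infty} \frac{(iz)^n}{n!} \int_a^b f(x) x^n\, d\mu(x) = 0
\]
on the disk $|z| < r/2$. The identity principle then propagates the vanishing to all of $S_{r/2}$, and in particular $F(\xi)=0$ for every real $\xi$. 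Since $d\nu := f\,d\mu$ is a finite signed Borel measure on $(a,b)$ (again by Cauchy--Schwarz, using finiteness of $\mu$), its Fourier transform vanishes identically, and uniqueness of the Fourier transform of finite measures forces $\nu=0$, i.e.\ $f=0$ in $L^2(a,b;\mu)$.

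The main obstacle is the analytic step: establishing that $F$ extends to a holomorphic function on a horizontal strip and that its Taylor series at the origin may be integrated term by term. Both rely crucially on the exponential moment bound---without it the integrals defining $F$ and its derivatives need not even converge for nonreal $z$. Once the strip of holomorphy is in hand, the vanishing of all moments of $f$ immediately annihilates the power series, and the final passage from $\widehat{\nu}\equiv 0$ to $\nu = 0$ is entirely standard.
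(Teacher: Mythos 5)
The paper does not prove this statement: it is quoted verbatim as a theorem from the reference \cite{sznagy;orthogonal-expansions} and used as a black box to establish the maximality of the family $\{P_{k,l}\}_l$ in the weighted $L^2$ space. So there is no in-paper proof to compare against. Your proposal is a correct and complete proof, and it is in fact the classical argument (due to M.~Riesz) that one finds in the cited reference and in standard treatments of the moment problem: the exponential moment guarantees that $F(z)=\int f\,e^{izx}\,d\mu$ is holomorphic on the strip $|\operatorname{Im} z| < r/2$, the vanishing of all polynomial moments of $f$ kills the Taylor coefficients of $F$ at the origin, analytic continuation forces $F\equiv 0$ on the strip and in particular on $\R$, and Fourier uniqueness for the finite signed measure $f\,d\mu$ gives $f=0$ $\mu$-a.e. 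The one point worth stating slightly more carefully is the Fubini step: you correctly bound $\int e^{|z||x|}|f|\,d\mu$ by Cauchy--Schwarz against $\bigl(\int e^{2|z||x|}\,d\mu\bigr)^{1/2}$, and this is finite only for $|z|<r/2$, which is exactly the disk you claim; just make sure to say explicitly that you are applying Cauchy--Schwarz with $f\in L^2(\mu)$ and $e^{|z|\,|x|}\in L^2(\mu)$, since $\mu$ being finite alone would not suffice. With that spelled out, the argument is airtight and matches the expected source.
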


\begin{proof}[Proof of Theorem \ref{thm:completeness}]
It suffices to show that the set of all finite linear combinations of members of $S$ is dense. Let $f  (r, \theta) \in H_{Sym}(\Omega_{0, N})$. Since $f$ is periodic in $\theta$ and $f(r, \cdot) \in L^2(-\pi/N, \pi/N)$ for almost every $r$, we can expand $f(r, \cdot)$ in Fourier series for almost every  $r \in (0, \infty)$. The symmetries of $f$ imply  $f(r, \theta) = \sum_{j \in \mathbf{N}} a_j(r) \sin(jN \theta)$. Using the change of variable $\rho =r^2$, we write
	\begin{equation}
	\tilde{f} (\rho, \theta) = f(\sqrt \rho, \theta) = \sum_{j \in \mathbf{N}} b_j(\rho) \sin(jN\theta) \textrm { for a.e $\rho \in (0, \infty)$}.
	\end{equation}
The $H$-norm of $f$ is finite, therefore
	\begin{align*}
	\lefteqn{\int_{\Omega_{0, N}} |f (r, \theta)|^2  e^{-|\xi|^2/2} d\xi = \int_{r=0}^{\infty} \int_{\theta = -\pi/N}^{\pi/N} |f(r, \theta)|^2 d\theta \ e^{-r^2/2} r \ dr }\\ 
	 &=  \frac{\pi}{2N} \int_{\rho=0}^{\infty} \sum_{j \in \mathbf N} b_j^2(\rho) e^{-\rho/2} d\rho = \frac{\pi}{2N}\sum_{j \in \mathbf N} \int_{\rho=0}^{\infty}  b_j^2(\rho) e^{-\rho/2} d\rho <\infty.
	\end{align*}
Hence, the functions $b_j$'s  are in $L^2(0, \infty; e^{-\rho/2} d\rho)$ for $j \in \mathbf N$.  For every $\eps>0$, there is a $J (\eps)\in \mathbf N$ so that 
	\[
	\frac{\pi}{2N}\sum_{j > J(\eps)} \int_{\rho=0}^{\infty}  b_j^2(\rho) e^{-\rho/2} d\rho \leq \eps /2.
	\]
We can approximate $b_{j} \rho^{-jN/2} \in L^2(0, \infty; \rho^{jN} e^{-\rho/2} d\rho)$ by a linear combination of $P_{j, l}$, since $\{P_{j, l} \}_{l \in \{0\} \cup \mathbf N} $ is a maximal orthogonal set . More precisely, for every $\eps>0$ and every $j$,  there is a linear combination of $P_{j, l}$'s denoted by $Q_{j, \eps}$ so that
	\[
	\frac{\pi}{2N} \int_0^{\infty}|b_j\ \rho^{-jN/2} - Q_{j, \eps}|^2 \rho^{jN} e^{-\rho/2} \ d\rho < 2^{-j}\eps.
	\]
Define $\mathcal Q_{\eps}$ to be the linear combination of elements of $S$  given by 
	$
	\mathcal Q_{\eps} =  \sum_{0<j \leq J(\eps)} Q_{j, \eps}  \rho^{jN/2} \sin(jN \theta)$.  Then
	\begin{align*}
	\lefteqn{\| f - \mathcal{Q}_{\eps} \|^2_{H(\Omega_{0, N})} }\\
	&= \frac{\pi}{2N}  \int_{\rho=0}^{\infty}\left( \sum_{j >J(\eps) } b_j^2(\rho) e^{-\rho/2} 
	  + 
	  \sum_{0<j \leq J(\eps) } |b_j - Q_{j, \eps} \rho^{jN/2}|^2 e^{-\rho/2}\right) d\rho\\
		& \leq \eps/2 + \sum_{0<j \leq J(\eps)} 2^{-j-1} \eps \leq \eps.
	\end{align*}
This shows that every function in $H_{Sym}(\Omega_{0, N})$ can be approximated arbitrarily closely by a linear combination of functions of $S$. 
\end{proof}


\subsection{Poincar\'e inequality}
As an immediate consequence of Theorem \ref{thm:completeness}, we have the following inequality for $u \in C_c^{\infty}(\Omega_{0, N})$:
	\[
	\| u \|_V^2 = -( \Delta u -\xi \cdot Du - u, u)_H \geq - \lambda_0 \|u\|_H^2 = (N+1) \| u\|_H^2, 
	\]
where $\lambda_0$ is the lowest eigenvalue of the operator $- \lin_-$. This implies a Poincar\'e inequality
	\begin{equation}
	\label{eq:poincare}
	\| u \|_{V(\Omega')} \geq \sqrt{N+1}\ \| u \|_{H(\Omega')}, \quad u \in V_0(\Omega')
	\end{equation}
for every domain  $\Omega' \subset \Omega_{0, N}$.

\subsection{Dirichlet problem}

Let $\OmRN$ be defined as in \eqref{eq:OmRN}. 
\begin{lemma}
\label{lem:linear-problem-solvable}
For any $g \in H(\OmRN)$, the equation 
	\begin{equation}
	\label{eq:linear-problem}
	\lin (u) = \Delta u -\xi \cdot Du +u = g
	\end{equation}
possesses a weak solution $u \in V_0(\OmRN)$.
\end{lemma}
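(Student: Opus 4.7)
The plan is to recast the problem variationally and apply the Lax--Milgram lemma. For $u, v \in C^\infty_c(\OmRN)$, the identity $\operatorname{div}(Du\, e^{-|\xi|^2/2}) = (\Delta u - \xi \cdot Du)\,e^{-|\xi|^2/2}$ and one integration by parts give
$$
(\lin u, v)_H \;=\; -\int_{\OmRN} Du \cdot Dv\,dm + \int_{\OmRN} uv\,dm.
$$
I would therefore introduce the bilinear form
$$
a(u,v) \;=\; \int_{\OmRN}\bigl(Du\cdot Dv - uv\bigr)\,dm
$$
on $V_0(\OmRN) \times V_0(\OmRN)$, and call $u \in V_0(\OmRN)$ a weak solution of $\lin u = g$ whenever $a(u,v) = -(g,v)_H$ for every $v \in V_0(\OmRN)$. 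The identity above, extended by density from $C^\infty_c(\OmRN)$, shows that this is indeed the natural notion of a weak solution in this Gaussian-weighted setting.

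Continuity of $a$ on $V_0(\OmRN) \times V_0(\OmRN)$ is an immediate consequence of Cauchy--Schwarz, giving $|a(u,v)| \leq \|u\|_V \|v\|_V$, and the right-hand side $v \mapsto -(g,v)_H$ is bounded on $V_0(\OmRN)$ by $\|g\|_H$. The main point is coercivity, which is where the Poincar\'e inequality \eqref{eq:poincare} enters. Since $\OmRN \subset \Omega_{0, N}$, extension by zero lets us apply \eqref{eq:poincare}: for every $u \in V_0(\OmRN)$ we have $\|u\|_V^2 \geq (N+1)\,\|u\|_H^2$, hence $\|u\|_H^2 \leq (N+1)^{-1}\|u\|_V^2$. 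Writing $\|Du\|_H^2 = \|u\|_V^2 - \|u\|_H^2$ we obtain
$$
a(u,u) \;=\; \|u\|_V^2 - 2\|u\|_H^2 \;\geq\; \Bigl(1 - \tfrac{2}{N+1}\Bigr)\|u\|_V^2 \;=\; \tfrac{N-1}{N+1}\,\|u\|_V^2,
$$
which is strictly positive since $N \geq 5$. Applying Lax--Milgram then produces a (unique) $u \in V_0(\OmRN)$ with $a(u, v) = -(g, v)_H$ for all $v \in V_0(\OmRN)$, which is the required weak solution.

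I do not expect any real obstacle here. The one structural subtlety is the sign of the zeroth-order term $+u$ in $\lin$, which yields the subtracted $-2\|u\|_H^2$ in $a(u,u)$; this is a lower-order perturbation that the spectral gap of $-\lin_-$ --- encoded in \eqref{eq:poincare} --- comfortably absorbs under the symmetry assumption $N \geq 5$. Uniqueness of the weak solution is a bonus from Lax--Milgram, though the statement of the lemma does not require it.
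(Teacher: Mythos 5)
Your proof is correct and follows essentially the same route as the paper: the paper minimizes the energy functional $J(u) = \int_{\OmRN}\bigl\{\tfrac12|Du|^2 - \tfrac12 u^2 + gu\bigr\}\,dm$ over $V_0(\OmRN)$, whose Euler--Lagrange equation is precisely your weak form $a(u,v) = -(g,v)_H$, and both arguments hinge on the Poincar\'e inequality \eqref{eq:poincare} to absorb the wrong-sign zeroth-order term. Casting it as Lax--Milgram rather than the direct method is a cosmetic difference, and your coercivity estimate $a(u,u) \geq \tfrac{N-1}{N+1}\|u\|_V^2$ is accurate.
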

	
\begin{proof} It is a classical variational argument for the functional
	$
	J(u) = \int_{\OmRN} \Bigl\{\frac{1}{2}|D u(\xi)|^2 - \frac{1}{2}u(\xi)^2+g(\xi)u(\xi)\Bigr\}\;dm(\xi), $ over  $u\in V_0(\OmRN)$ 
and using the Poincar\'e inequality \eqref{eq:poincare}.
\end{proof}


\subsection{Maximum Principle.}

Let $\Omega'$ be any domain that is a (not necessarily proper) subset of $\OmRN$. 

\begin{definition}
We say that $u \in V(\Omega')$ satisfies $u \leq 0$ on $\partial \Omega'$ if its positive part $u^+=\max (u, 0) \in V_0(\Omega')$. 
\end{definition}
\begin{theorem} [Maximum Principle]
\label{thm:max-principle}
 Let $N \geq 5$. Suppose $u \in V(\Omega')$ satisfies 
	\begin{gather*}
	\lin u = \Delta u - \xi \cdot D u +u \geq 0 \textrm{ in  }\Omega'\\
	u \leq 0 \textrm{ on }\partial \Omega',
	\end{gather*}
then $u \leq 0$ in $\Omega'$. 
\end{theorem}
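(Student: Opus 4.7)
The plan is to use the standard Stampacchia weak maximum principle argument, adapted to the Gaussian weight: test the weak hypothesis $\lin u \geq 0$ against $u^+$, integrate by parts against $dm$, and close the estimate with the Poincar\'e inequality \eqref{eq:poincare}. The structural feature that makes the bookkeeping clean is that $\lin$ is formally self-adjoint in $H$, because
	\[
	\Delta u - \xi\cdot Du \;=\; e^{|\xi|^2/2}\,\mathrm{div}\bigl(e^{-|\xi|^2/2}\,Du\bigr).
	\]

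Concretely, I would first record the integration by parts identity
	\[
	\int_{\Omega'} (\Delta u - \xi\cdot Du)\,v\,dm \;=\; -\int_{\Omega'} Du\cdot Dv\,dm, \qquad v\in V_0(\Omega'),
	\]
so that the weak statement $\lin u\geq 0$ reads $-\int Du\cdot Dv\,dm + \int uv\,dm \geq 0$ for every non-negative $v\in V_0(\Omega')$. By hypothesis $u^+\in V_0(\Omega')$ is an admissible non-negative test function, and the standard identities $D(u^+)=\chi_{\{u>0\}}Du$ and $u\cdot u^+ = (u^+)^2$ reduce this to
	\[
	\int_{\Omega'}|Du^+|^2\,dm \;\leq\; \int_{\Omega'}(u^+)^2\,dm. \qquad (\star)
	\]
Next, I would invoke the Poincar\'e inequality \eqref{eq:poincare} applied to $u^+$, which in terms of the split $V$-norm says $\|u^+\|_H^2 + \|Du^+\|_H^2 \geq (N+1)\|u^+\|_H^2$, i.e.\ $\|Du^+\|_H^2 \geq N\|u^+\|_H^2$. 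Combining with $(\star)$ yields
	\[
	N\int_{\Omega'}(u^+)^2\,dm \;\leq\; \int_{\Omega'}|Du^+|^2\,dm \;\leq\; \int_{\Omega'}(u^+)^2\,dm,
	\]
and since $N\geq 5>1$ this forces $u^+\equiv 0$, i.e.\ $u\leq 0$ in $\Omega'$.

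The main obstacle I anticipate is the legitimacy of the last application of Poincar\'e for $u^+$, because \eqref{eq:poincare} was derived through the eigenfunction expansion on the symmetric Hilbert space $H_{Sym}$ and $u^+$ does not inherit the full dihedral symmetry even when $u$ does: the antisymmetry $u(r,-\theta)=-u(r,\theta)$ is incompatible with taking the positive part. I would circumvent this by extending $u^+$ first by zero beyond $\Omega'\subset \Omega_{R,N}$ and then by successive odd/even reflections across the angular faces $\theta=\pm\pi/N$, which is permissible precisely because $u^+\in V_0$ guarantees vanishing trace on those faces. The resulting extension is a genuinely symmetric function on $\R^2\setminus\bar B_R$ whose $H$- and $V$-norms are equal integer multiples of those of $u^+$, so the Poincar\'e constant $\sqrt{N+1}$ passes to $u^+$ unchanged.
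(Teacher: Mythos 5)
Your argument---test $\lin u \geq 0$ weakly against $u^+ \in V_0(\Omega')$ to obtain $\|Du^+\|_H^2 \leq \|u^+\|_H^2$, then invoke \eqref{eq:poincare} in the form $\|Du^+\|_H^2 \geq N\|u^+\|_H^2$ to force $u^+ \equiv 0$---is correct and takes a genuinely different, and shorter, route than the paper. The paper instead performs a ground-state reduction: noting that $\psi := r\cos\theta$ is a strictly positive solution of $\lin\psi = 0$ on $\Omega_{R,N}$, it writes $u = \psi\varphi$, derives the inequality \eqref{eq:phi} for $\varphi$, tests with $v = \max(\varphi,0)$, and closes via Cauchy--Schwarz together with \eqref{eq:poincare}, which produces the geometric condition $R\cos\frac{\pi}{2N} > 1/\sqrt{N}$ (hence the joint role of $N\geq 5$ and $R>1/\sqrt 2$). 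Your direct Stampacchia-type test dispenses with $\psi$ entirely and closes under the weaker condition $N>1$, independent of $R$. On your anticipated obstacle, however, both the worry and the proposed repair are off the mark: \eqref{eq:poincare} is asserted for \emph{every} $u\in V_0(\Omega')$ with $\Omega'\subset\Omega_{0,N}$ --- the domain $\Omega'$ sits inside a single angular sector, so the dihedral symmetry on the full plane simply never enters --- and the paper's own proof already applies \eqref{eq:poincare} to $v=\max(\varphi,0)$, which is no more symmetric than your $u^+$. Moreover, the reflection construction cannot work as written: a function satisfying $\tilde u(r,\theta)=-\tilde u(r,-\theta)$ that agrees with $u^+\geq 0$ near $\theta=0$ must vanish there, and for odd $N$ the chain of odd reflections across the $N$ angular faces returns to the starting sector with an overall sign $(-1)^N=-1$, so the extension is not even well-defined. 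Delete that last paragraph; the rest of the proof stands as written, citing \eqref{eq:poincare} directly.
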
  

\begin{proof}
First note that $\psi:= r\cos \theta$ is a solution of $\lin \psi =0$. By the hypotheses, we have
	\[
	\int_{\Omega'} Du\cdot  Dv - uv \ dm \leq 0, \quad \textrm{ for } v\geq 0, v \in C^1_0(\Omega').
	\]
Define the function $\varphi$ such that  
	$
	u=\psi \varphi.
	$ 
Since $1/\psi$ and $|D\psi|$ are uniformly bounded, $\varphi \in V$. Moreover, $\varphi$ satisfies
	\[
	\int_{\Omega'} \psi D\varphi \cdot  Dv -v D\psi \cdot D\varphi +D\psi \cdot D(\varphi v) - \psi \varphi v \ dm \leq 0.
	\]
Since $\lin \psi =0$ and $\varphi v \in V_0$, we have
	$
	\int_{\Omega'} D\psi \cdot D(\varphi v) - \psi \varphi v \ dm =0.
	$
Hence,
	\begin{equation} 
	\label{eq:phi}
	\int_{\Omega'} \psi D\varphi \cdot Dv - v D\varphi \cdot D\psi \ dm\leq 0.
	\end{equation}
Take $v=\max(\varphi, 0)$ in \eqref{eq:phi}. Note that we can do this since $u^+\in V_0$ and $\psi \neq 0$, so  $v \in V_0$. Therefore
	\[
	\int_{\Omega'} \psi Dv \cdot Dv \ dm \leq \int_{\Omega'} v Dv \cdot D\psi  \ dm
	\]
since $Dv=D\varphi$ on the support of $v$. Using $R \cos\frac{\pi}{2N} \leq \psi $ and $|D\psi| \leq 1$, we get
	\[
	R \cos\frac{\pi}{2N} \int_{\Omega'} |Dv|^2 dm \leq \int_{\Omega'} |Dv| v \ dm,
	\]
and 
	\[ 
	R \cos\frac{\pi}{2N} \|Dv\|_H^2 \leq \|Dv\|_H \|v\|_H
	\]
by the Cauchy Schwarz inequality. Our Poincar\'e inequality \eqref{eq:poincare} implies that
	\[
	R \cos\frac{\pi}{2N} \|Dv\|_H^2 \leq \|Dv\|_H \frac{1}{\sqrt{N}}\|Dv\|_H.
	\]
A simple computation shows that for $N\geq 5$, $R \cos\frac{\pi}{2N} -\frac{1}{\sqrt{N}} \geq \frac{1}{\sqrt 2} \cos\frac{\pi}{2N} -\frac{1}{\sqrt{N}} > 0$. Therefore
	\[
	\|Dv\|_H =0.
	\]
But $v \in V_0(\Omega')$, so $v \equiv 0$; this means that $\varphi \leq 0$ and $u\leq 0$ in $\Omega'$.
\end{proof}

\subsection{Boundary value problem}
\label{ssec:boundary-value-problem}

Let $F \in C^4([0, 2 \pi])$ with the symmetries \eqref{eq:symmetries-f}: $F(\theta) = - F(-\theta) =  F(\pi/N - \theta)$. We will use the notation $K_0 = \| F\|_{C^4([0, 2\pi])}$ throughout Section \ref{sec:linear-operator}.

\begin{definition}
\label{def:boundary-value-problem}
We say that $u \in V(\Omega)$ $(V(\OmRN))$ is a solution to the problem 
	\[
	\lin u = 0, \textrm { in  }\Omega\  (\OmRN \text{ resp.}), \quad
	u = F \textrm{ on } \pd\Omega \ (\pd \OmRN \text{ resp. })
	\]
if $u$ satisfies 
	\[
	\lin u =0, \textrm{ in } \Omega\  (\OmRN \text{ resp.}), \quad
	u - \psi \in V_0(\Omega)\ (V_0(\OmRN) \text{ resp.}),
	\]
where $\psi$ is a function in $C^4(\bar \Omega)$ ($C^4(\bar \Omega_{R,N})$ resp.) with  $\psi(R, \theta) = F(\theta)$ 
and $\psi(r, \theta) = -\psi(r, -\theta)= \psi(r, \pi/N -\theta) $ for $r \geq R,  \theta \in [0, 2 \pi)$, ($r \geq R,  \theta \in [-\pi/N, \pi/N]$ resp.). 
\end{definition}

It follows from Lemma \ref{lem:linear-problem-solvable} and the Maximum Principle (Theorem \ref{thm:max-principle}) that the solution $u$ to $\lin u =0 $ in $\OmRN$, $u=F$ on $\pd \OmRN$ exists and is well defined, in other words it is unique and independent of the choice of $\psi$. 

Denote by $U_1$ the extension of $u$ to $\Omega$ given by 
	\begin{gather*}
	U_1 (r, \theta) = u(r, \theta) \quad \textrm{ in } \OmRN \\
	U_1 (r, \theta) = - U_1 (r, -\theta) = U_1(r, \pi/N -\theta) \quad \textrm{ in } \Omega.
	\end{gather*}
The function $U_1$ is a  solution to boundary value problem 
	\begin{equation}
	\label{eq:U1-equation}
	\lin U_1 = \Delta U_1 - \xi \cdot DU_1 +U_1 =0 \textrm{ in } \Omega, \quad U_1 = F \textrm { on } \pd \Omega.
	\end{equation}
Moreover, it is smooth by standard elliptic theory.

\subsection{Behavior of solution at infinity}
\label{sec:behavior-infty}

Let $u$ be the solution to the problem $\lin u =0 $ in $\OmRN$, $u=F$ on $\pd \OmRN$. We can choose $\psi$ in Definition \ref{def:boundary-value-problem} satisfying 
	$
	\|\psi\|_{\infty} \leq \|F\|_{\infty} $ with $ \psi = 0 $ in $ \OmRN\setminus B_{R+2}.
	$
Let $a=3 \| F \|_{\infty}$ and  consider the function 
	\[
	w_1=u-a r \cos \theta.
	\]
Since $N\geq 5$, $a = 3 \| F \|_{\infty} \geq \frac{ 2 \|F\|_{\infty}}{\cos(\pi/N)}$ therefore  $w_1\leq 0$ on $\partial \OmRN$. It follows from the Maximum Principle  that $w_1\leq 0$ in $\OmRN$, hence
	$
	u (r, \theta) \leq a r \cos \theta \textrm { in } \OmRN.
	$
An similar argument with $w_2=-a r \cos \theta -u$ gives a lower bound on $u(r, \theta)$, so
	\begin{equation}
	\label{eq:linear-growth-OmRN}
	\lvert  u(r, \theta) \rvert \leq ar \cos (\theta) \leq 3 K_0 \ r \textrm { on } \OmRN.
	\end{equation}
Therefore, we also have
	\begin{equation}
	\label{eq:U1-growth}
	|U_1 (\xi)| \leq 3 K_0 |\xi|, \quad \xi \in \Omega.
	\end{equation}



\subsection{Bounds on the first and second derivatives of the solution} Let $U_1$ be the solution to \eqref{eq:U1-equation} from section \ref{ssec:boundary-value-problem}.

\begin{lemma}\label{lem:U1-bounds} There is a constant $K_1$ independent of $U_1$ and $F$ so that
	\begin{equation}
	\label{eq:U1-bounds}
	|DU_1|+|\xi||D^2U_1| \leq K_1 \|F\|_{C^4[0, 2\pi]}
	\end{equation}
\end{lemma}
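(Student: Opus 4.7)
The proof splits naturally by distance from the inner boundary $\partial B_R$. On the bounded annulus $\{R \leq |\xi| \leq R+2\}$, the operator $\lin$ is uniformly elliptic with smooth, bounded coefficients, and the symmetries of $U_1$ ensure that after odd/even reflection across the sector boundaries of $\OmRN$ the extension is smooth across these internal lines (the symmetries force $F(0)=F(\pi/N)=0$ and analogous compatibility for higher derivatives). The growth bound \eqref{eq:U1-growth} gives $|U_1| \leq 3(R+2) K_0$ on this annulus, and combined with the $C^{2,\alpha}$ regularity of $F$ inherited from $F \in C^4$, classical boundary and interior Schauder estimates yield $\|U_1\|_{C^{2,\alpha}} \leq C(R, N) \|F\|_{C^4}$ there. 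Since $|\xi|$ is bounded, this covers the claimed pointwise bound in the near-boundary region.

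For $|\xi_0| = r_0 > R+2$, a direct rescaling plus Schauder is insufficient, since $\lin$ is not scale invariant: on a unit ball the drift coefficient $\xi$ has size $r_0$, and the best naive estimate yields a gradient bound proportional to $r_0^2 K_0$ rather than $O(K_0)$. I would instead exploit the explicit representation built in Section \ref{ssec:eigenvalues}. The imposed symmetries force a Fourier expansion $U_1(r, \theta) = \sum_{k \geq 1} u_k(r) \sin(kN\theta)$, where each radial coefficient $u_k$ solves an ODE that, via the substitution $u_k = r^{kN} v_k(r^2/2)$, reduces to Kummer's confluent hypergeometric equation with parameters $a = (kN-1)/2$, $b = kN+1$. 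The unique solution with $u_k(R) = F_k$ (the $k$-th Fourier coefficient of $F$) and at most linear growth at infinity is $u_k(r) = F_k\, \phi_k(r)/\phi_k(R)$, where $\phi_k(r) = r^{kN} U\bigl((kN-1)/2,\,kN+1,\,r^2/2\bigr)$ and $U$ denotes Tricomi's function. The $C^4$ regularity of $F$ gives Fourier decay $|F_k| \leq C\|F\|_{C^4}/k^4$, and termwise differentiation of the series, using known asymptotic expansions of $\phi_k$, $\phi_k'$ and $\phi_k''$, provides the pointwise bounds on $DU_1$ and $|\xi| D^2 U_1$ after summation.

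The principal technical obstacle is quantitative control of Tricomi's $U(a, b, s)$ as the parameter $a = (kN-1)/2$ grows with $k$. Using the small-argument expansion $U(a, b, s) \sim \Gamma(b-1)/\Gamma(a)\, s^{1-b}$ together with Stirling's formula, $\phi_k(R)$ grows super-exponentially in $k$, so the effective weights $|F_k|/\phi_k(R)$ decay very rapidly; this dominates both the polynomial-in-$k$ growth of $\phi_k'$ and $\phi_k''$ (extracted from the large-argument expansion $U(a, b, s) \sim s^{-a}$) and the factor $kN$ arising from $\theta$-differentiation of $\sin(kN\theta)$, yielding absolute convergence of the Fourier series for $DU_1$ and $|\xi| D^2 U_1$ with sum $\leq K_1 \|F\|_{C^4}$. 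Making these hypergeometric asymptotics precise, uniformly in the large parameter $a$ and for $r \in [R, \infty)$, is the main technical heart of the proof.
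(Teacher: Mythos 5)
Your near-boundary argument (elliptic regularity on the annulus $\{R\leq|\xi|\leq R+2\}$, reflecting across the sector boundaries and using the growth bound together with the $C^4$ data) is essentially the paper's Step 2, which uses $W^{4,2}$ global regularity plus Sobolev embedding rather than direct Schauder; either works. You also correctly diagnose why naive rescaling fails in the far region: the drift coefficient $\xi$ grows, so a rescaled elliptic estimate picks up an unwanted factor of $|\xi|$.

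Away from the boundary you take a genuinely different and much heavier route, and it leaves a real gap. The paper instead observes that
\[
v(x,t) = \sqrt{1-t}\;U_1\Bigl(\frac{x}{\sqrt{2(1-t)}}\Bigr)
\]
solves the ordinary heat equation $v_t=\Delta v$ (the self-similar structure of $\lin$ becomes pure diffusion under this change of variables). Standard interior parabolic estimates
$|D^m v(p,0)|\leq C(m)\,s^{-m}\sup_{B_s(p)\times(-s^2,0)}|v|$
with $s\sim|p|$, together with the linear growth $|v|\lesssim K_0|x|$ coming from \eqref{eq:U1-growth}, then give $|DU_1|\lesssim K_0$ and $|\xi||D^2U_1|\lesssim K_0$ directly for $|\xi|$ a fixed multiple of $R$. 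This is short once one spots the transformation. Your Fourier/Tricomi-function alternative is plausible in outline, but the step you call ``the main technical heart'' is in fact unresolved: the expansion $U(a,b,s)\sim\Gamma(b-1)\Gamma(a)^{-1}s^{1-b}$ is a small-$s$ expansion valid for \emph{fixed} $a,b$, and here $a\sim kN/2$, $b=kN+1$ both diverge while $s=r^2/2\geq R^2/2$ is bounded away from $0$. Establishing the claimed super-exponential decay of $|F_k|/\phi_k(R)$ and its domination over $\phi_k'$, $\phi_k''$, and the $kN$ coming from $\theta$-differentiation, uniformly over $r\in[R,\infty)$, requires genuine uniform-in-parameter asymptotics (Olver-type analysis) that you assert but do not supply. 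So as written the far-field bound is not proved; the heat-equation change of variables is the missing idea that makes the whole estimate elementary.
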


\begin{proof}

\emph{Step 1: Estimate away from the boundary.}\\
The function $U_1$ is smooth in $\Omega$ by elliptic theory.  A computation shows that the function 
	\[
	v(x, t)=\sqrt{1-t}\  U_1\left(\frac{x}{\sqrt{2(1-t)}}\right)
	\] 
satisfies the heat equation
	$
	v_t=\Delta v.
	$
The theory for parabolic equations gives  us the following estimates on the derivatives of $v$,
	\begin{equation}
	\label{eq:parabest}
	|D^m v(p,0)|\leq \frac{C(m)}{s^m} \sup_{B_s(p) \times (-s^2,0)} |v(x,t)|,
	\end{equation}
where $C(m)$ is a constant independent of $v$ or $s$ and         as long as $v$ exists in $B_s(p) \times (-s^2,0)$. This is true provided 
	\[
	\frac{\min_{x\in B_s(p)} |x|}{\max_{t\in (-s^2, 0)} \sqrt{2 (1-t)}}=\frac{|p|-s}{\sqrt{2(1+s^2)}} >R.
	\]
In particular, it is true  if  $|p|>\frac{5\sqrt{2}}{3} R$ and $s=|p|/10$. From \eqref{eq:parabest}, we get
	\[
	|Dv(p,0)| \leq \frac{10\ C(1)}{|p|} \sup_{B_s(p) \times (-s^2,0)} |v(x,t)|.
	\]
The last factor of the right hand side can be estimated using equation \eqref{eq:U1-growth} to obtain
	\begin{align*}
	\sup_{B_s(p) \times (-s^2,0)} |v(x,t)| 
	&\leq\sqrt{2}\ K_0 (|p|+s).
	\end{align*}
Using the notation  $\xi=p/\sqrt{2}$, we have
	\[
	|DU_1(\xi)|=\sqrt{2}|Dv(p,0)|
	\leq 22\  C(1) K_0 	\textrm { for  } |\xi|>5R/3.
	\]
Similarly, we get an estimate on the second derivative
	\[
	|D^2 U_1(\xi)|=2|D^2 v(p,0)| \leq 
	\frac{ 220 \ C(2)}{|\xi|} K_0 \textrm { for  } |\xi|>5R/3.
	\]
Therefore
	\begin{equation}
	\label{eq:away}
	|DU_1|+|\xi||D^2U_1|\leq C \|F\|_{L^{\infty}} \textrm{ for } |\xi| >5R/3.
	\end{equation}
Since $R<  2$, the estimate \eqref{eq:away} is valid for $|\xi| \geq 4$ in particular.\\

\emph{Step 2: Estimates up to the boundary.}\\
Consider the equation 
	$
	\lin U_1 =0 \textrm { in the annulus  } A_{R, 4}=B_4 \setminus \bar{B}_R.
	$
The domain $A_{R, 4}$ is bounded so global regularity results for elliptic equations from \cite{gilbarg-trudinger;elliptic-equations} imply that
	\begin{equation}
	\label{eq:global-reg}
	\| U_1 \|_{W^{4, 2}(A_{R, 4})} \leq C ( \| U_1 \|_{L^2( A_{R, 4})}  + \| \psi \|_{W^{4, 2}( A_{R, 4})}),
	\end{equation}
where $\psi$ is a function in $W^{4,2}(A_{R, 4})$ such that $\psi -U_1 \in W^{1,2}_0(A_{R, 4})$. 

Note that we can bound the first four derivatives of $U_1$ for $|\xi| =4$ by an argument similar to the one in step 1,
	\[
	|D^j U_1| (\xi) \leq C \|F\|_{L^{\infty}} \textrm { for  } i =1, \ldots, 4 \text{ and }\xi \in \pd B_4.
	\]
It is then clear that we can find a constant $C$ independent of $F$ and $U_1$, and a function $\psi$ for which \begin{gather}
	\psi|_{\pd B_R}=F,
	\qquad 
	\psi|_{\pd B_4}=U_1|_{\pd B_4}, \notag\\
	 \|\psi \|_{C^4(A_{R, 4})} \leq C \|F\|_{C^4[0, 2 \pi]}. \label{req:3}
	\end{gather}
With this choice of $\psi$, equation \eqref{eq:global-reg} gives us
	\begin{equation}
	\label{eq:varphi}
	\| U_1 \|_{W^{4,2}(A_{R, 4}) }\leq C(\|\psi\|_{W^{4,2}(A_{R, 4}) } + \| U_1\|_{L^2(A_{R, 4}) }).
	\end{equation}
Using Sobolev's inequality, \eqref{req:3},  \eqref{eq:varphi} and  \eqref{eq:U1-growth}, we get
\begin{equation}
\label{eq:near}
\| U_1 \|_{C^2(A_{R, 4})} \leq C \|F\|_{C^4[0, 2\pi]}.
\end{equation}
Combining \eqref{eq:away} and \eqref{eq:near}, we obtain the desired result.
\end{proof}


\section{Finding sub and supersolutions}
This section is devoted to the construction of  a subsolution $u_-$ and a supersolution $u_+$ to the problem \eqref{eq:quasilinear}-\eqref{eq:boundary-condition}, or equivalently, to
	\begin{align}
 	\label{eq:nonlin}
 	 \lin u = \Delta u -\xi \cdot D u + u & =  \frac{D_i u D_ju D_{ij}u}{1+|Du|^2} \\
  	u|_{\pd\Omega} &= \eps F \nonumber
	\end{align}
with small $\eps$.  
Let us discuss the strategy for finding a supersolution first. Since the boundary data is of order $\eps$, we write $u = \eps U$, 
	\[
	\lin U =\eps^2 \frac{D_i UD_jUD_{ij}U}{1+\eps^2|DU|^2}, \qquad
	U|_{\pd\Omega} = F
	\]
and decompose $U$ into the two terms $U=U_1+\eps^2 U_2$, where
	\begin{align}
  	\lin U_1 &= 0 , & U_1|_{\pd\Omega}= F,\label{eq:U1}\\
  	\lin U_2 &= \frac{D_i UD_jUD_{ij}U}{1+\eps^2|DU|^2}, &U_2
 	 |_{\pd\Omega}=0.\label{eq:U2}
	\end{align}
We discussed the existence of such  a function $U_1$ in section \ref{sec:behavior-infty}. Choosing an appropriate function $U_2$ is done below.


\subsection{Preliminary computations for finding a supersolution}

In order to get a supersolution, we want $U_2$ to satisfy
	$
	\lin U_2 -\frac{D_i UD_jUD_{ij}U}{1+\eps^2|DU|^2} \leq 0.
	$
Roughly estimating $U$ by $U_1$ and bounding the derivatives  of $U_1$ using Lemma \ref{lem:U1-bounds}, we are looking for  a $U_2$ so that
	\[
	\lin U_2 -\frac{D_i UD_jUD_{ij}U}{1+\eps^2|DU|^2} \leq \lin U_2 + C_1^3/ r \leq 0,
	\]
where $C_1 = K_1  \|F\|_{C^4[0, 2\pi]}$. Define
	$
	v_a = a\left(r-\frac{R^2}{r}\right).
	$
A simple computation gives
	\[
	\lin v_a= -\frac{a R^2}{r^3} + \frac{a}{r}(1-2R^2), \qquad v_a =0 \textrm{ on }\pd \Omega.
	\]
This makes $v_a$ a perfect candidate for $U_2$.

\begin{remark} The term $a(1-2R^2)/r$ has to counterbalance the contribution $C_1^3/r$ from the nonlinear term. Therefore, we need $(1-2 R^2)$ to be negative. This  justifies the imposed lower bound   $R>1/\sqrt 2$. 
\end{remark}

\subsection{Existence of a supersolution and a subsolution}

Let $\eps >0$ be a small constant. 
Consider the function 
	\[
	u_+= \eps (U_1 + \eps^2 v_a),
	\]
where $a$ is a constant to be chosen later. 
	\begin{equation}
\el u_+  
	=-\eps^3 \lin v_a
- \eps^3 \frac{D_i(\Uplus) D_j(\Uplus) D_{ij}(\Uplus)}{1+\eps^2|D(\Uplus)|^2}.\label{eq:4}
	\end{equation}
We have $
	|D_i v_a|  \leq 2a$ and $
	|D_{ij} v_a|  \leq \frac{6a}{r} $, which combined with \eqref{eq:U1-bounds}, give us
%
%
	\begin{multline*}
	\frac{1}{\eps^3} \el  u_+ \leq -\frac{aR^2}{r^3} + \frac{a}{r} (1-2R^2) + (C_1 + \eps^2 2a)^2 \left(\frac{C_1}{r} + \eps^2 \frac{6a}{r} \right)\\
	= \frac{1}{r}\left( -\frac{aR^2 }{r^2} -a(2R^2 -1) +C_1^3 + 10 a\eps^2 C_1^2 +28 a^2 \eps^4 C_1+24a^3 \eps^6 \right).
	\end{multline*}
Here we used the notation  $C_1 = K_1 \| F\|_{C^4}$.
We replace $a$ by $a= k C_1^3$, with $k$ a constant to be chosen later, to get
	\begin{multline}
	\label{eq:Eu1}
	\frac{r}{\eps^3} \el u_+ \leq C_1^3 \left( \frac{-kR^2}{r^2} -k(2R^2-1)+1 \right) \\
	 +C_1^3 \left( 10 \eps^2 k C_1^2+ 28 (\eps^2 k C_1^2)^2+ 24(\eps^2 k C_1^2)^3\right).
	\end{multline}
 If $\eps$ is small enough so that 
$
\eta := \eps^2 k C_1^2  \leq 1,
$
the equation \eqref{eq:Eu1} becomes
	\[
	\frac{r}{\eps^3} \el u_+ \leq C_1^3 \left( \frac{-kR^2}{r^2} -k(2R^2-1)+1+62 \eta \right).
	\]
Hence, to have a supersolution, it suffices to find $k$ and $\eps$ that satisfy
	\[
	\eta := \eps^2 k C_1^2  \leq 1 \textrm { and } -k(2R^2-1)+1+62 \eta  \leq 0. 
	\]
If we  take $R_0$ so that  $R\geq R_0> 1/\sqrt 2$, both inequalities are true for $k = \frac{2}{2R_0^2 -1}$ and $\eps^2 C_1^2 \leq  \frac{1}{62 k} =\frac{2R_0^2 -1}{124}$.
%
Recall that $C_1 = K_1 \|F\|_{C^4}$, therefore, for $\eps$ and $k$ so that
	\[
	\eps \|F\|_{C^4} \leq \frac{1}{K_1} \sqrt{ \frac{2R_0^2 -1}{124}}, \quad  k = \frac{2}{2R_0^2 -1}, \text{ and } R\geq R_0> 1/\sqrt 2
	\]
the function $u_+ = \eps (U_1 + \eps^2 kC_1^3 (r-\frac{R^2}{r}))$ is a supersolution $\el u_+ \leq 0$ with boundary condition $u_+|_{\pd \Omega} = \eps F$. A similar argument shows that $u_- = \eps (U_1 - \eps^2 kC_1^3 (r-\frac{R^2}{r}))$ is a subsolution  with boundary condition $u_-|_{\pd \Omega} = \eps F$. If we denote by $f= \eps F$ and $u_1 = \eps U_1$, we just proved the following result.

\begin{lemma}
\label{lem:subsuper} Assume $R\geq R_0>1/\sqrt 2$ and denote by $\Omega = \R^2 \setminus \bar B_R$, the plane with a hole of radius $R$ at the origin. Let  $f $ be a function in  $C^4[0,2\pi]$ with the symmetries \eqref{eq:symmetries-f},  
 $u_1$  a solution to the linear equation $\lin u_1 =0$, $ u_1|_{\pd\Omega}= f$ and 
  $K_1$  a constant (independent of $f$) so that
	\[
	|Du_1| +|\xi| |D^2u_1| \leq K_1 \|f\|_{C^4}.
	\]
If $\eps= \|f\|_{C^4[0, 2\pi]}$ satisfies
	\[
	0< \eps= \| f \|_{C^4} \leq \frac{1}{K_1} \sqrt{ \frac{2R_0^2 -1}{124}}, 
	\]
then the function $u_+ = u_1 + K_1^3 \eps^3 k (r-\frac{R^2}{r})$, where $k = \frac{2}{2R_0^2 -1}$, satisfies
\begin{gather*}
\el u_+ = g^{ij}(D u_+) D_{ij} u_+ -\xi \cdot D u_+ +u_+ \leq 0 \textrm { in } \Omega\\
u_+ =  f \textrm{ on }  \pd \Omega, 
\end{gather*}
and the function $u_-= u_1 - K_1^3 \eps^3 k (r-\frac{R^2}{r})$ satisfies
\begin{gather*}
\el u_- = g^{ij}(D u_-) D_{ij} u_- -\xi \cdot D u_- +u_- \geq 0 \textrm { in } \Omega\\
u_- =  f  \textrm{ on }  \pd \Omega.
\end{gather*}
\end{lemma}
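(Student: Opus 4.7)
The lemma is the final packaging of the preliminary computations carried out in the two preceding subsections, with the constants $k$ and the threshold on $\eps$ made explicit in terms of $R_0$ and $K_1$. My plan is to plug $u_+$ into $\el$, split off the quasilinear nonlinearity, and verify that the negative linear contribution from $w := K_1^3 \eps^3 k (r - R^2/r)$ dominates once $k$ and $\eps$ are chosen as stated.

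Since $u_+ = u_1 + w$ and $w$ vanishes on $\pd\Omega$, the boundary condition is automatic. A direct radial computation gives
\[
\lin w = -\frac{K_1^3 \eps^3 k R^2}{r^3} + \frac{K_1^3 \eps^3 k (1 - 2R^2)}{r},
\]
and in Cartesian coordinates the elementary bounds $|Dw| \le 2 K_1^3 \eps^3 k$ and $|D^2 w| \le 6 K_1^3 \eps^3 k/r$; since $\lin u_1 = 0$, we obtain $\lin u_+ = \lin w$. Writing $\el u = \lin u - \frac{D_i u\, D_j u\, D_{ij} u}{1 + |Du|^2}$, I would dominate the nonlinear term by $|Du_+|^2 |D^2 u_+|$ (dropping the positive denominator) and apply Lemma \ref{lem:U1-bounds} to get $|Du_1| \le K_1 \eps$ and $|D^2 u_1| \le K_1\eps/r$. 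Setting $\eta := k K_1^2 \eps^2$, these combine by the triangle inequality to
\[
|Du_+|^2 |D^2 u_+| \le \frac{(K_1\eps)^3}{r}\,(1+2\eta)^2(1+6\eta),
\]
and expanding $(1+2\eta)^2(1+6\eta) = 1 + 10\eta + 28\eta^2 + 24\eta^3$ gives the crude bound $\le 1 + 62\eta$ whenever $\eta \le 1$.

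Collecting the two pieces,
\[
\frac{r}{K_1^3 \eps^3}\, \el u_+ \;\le\; -\frac{kR^2}{r^2} - k(2R^2 - 1) + 1 + 62\eta.
\]
The choice $k = 2/(2R_0^2 - 1)$ together with $R \ge R_0$ forces $k(2R^2 - 1) \ge 2$, and the hypothesis $\eps \le K_1^{-1}\sqrt{(2R_0^2-1)/124}$ yields $\eta \le 1/62$, so $1 + 62\eta \le 2$ and the right-hand side is $\le 0$, proving $\el u_+ \le 0$. For $u_- = u_1 - w$, the linear part $\lin u_- = -\lin w$ carries the opposite (favorable) sign while the nonlinear term satisfies the same pointwise bound in absolute value, so the symmetric estimate gives $\el u_- \ge 0$. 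The only delicate bookkeeping is aligning the auxiliary threshold $\eta \le 1$ used in the algebraic simplification with the stricter $\eta \le 1/62$ needed to close the inequality; both are implied by the single stated bound on $\eps$. There is no genuine conceptual obstacle, since the earlier subsections already identified $v_a$ as the right cancelling term and the growth of $R>1/\sqrt 2$ as the mechanism that makes the coefficient $(1 - 2R^2)$ negative.
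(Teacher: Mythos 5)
Your proposal is correct and follows essentially the same route as the paper: split $\el u_+ = \lin w - \frac{D_iu_+D_ju_+D_{ij}u_+}{1+|Du_+|^2}$ with $w = K_1^3\eps^3 k(r-R^2/r)$, use $\lin u_1=0$, bound the nonlinearity by $|Du_+|^2|D^2u_+|$, expand with $\eta = kK_1^2\eps^2$ to get the polynomial $1+10\eta+28\eta^2+24\eta^3 \le 1+62\eta$, and then verify that $k = 2/(2R_0^2-1)$ and the stated bound on $\eps$ force $\eta \le 1/62$ so the right-hand side is nonpositive. The only (cosmetic) difference is that the paper first normalizes by writing $f = \eps F$ with $\|F\|_{C^4}=1$ and works with $U_1, v_a, C_1$, whereas you work directly with $u_1$ and $K_1\eps$; the algebra is identical.
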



From the bounds on the derivatives of $U_1$ and the definition of $u_+$ and $u_-$, we have
\begin{corollary} 
\label{cor:bound-subsuper}
Let $u_+$ and $u_-$ be defined as in the lemma above. Then there exists a constant $K_2$ depending only on $ \| f \|_{C^4}$ and $R_0$ so that 
	\[
	|\xi|^{-1} |u_{\pm}| + |Du_{\pm}|  + |\xi| |D^2 u_{\pm}| \leq K_2, \quad \xi \in \Omega
	\]
\end{corollary}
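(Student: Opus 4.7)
The plan is straightforward: since $u_\pm = u_1 \pm w$ with $w(\xi) = K_1^3 \eps^3 k\bigl(r-R^2/r\bigr)$ a purely radial correction, I would bound each summand separately by direct estimates and then apply the triangle inequality.

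For the linear piece $u_1 = \eps U_1$, the three pieces of the desired bound come from earlier results. The growth estimate \eqref{eq:U1-growth} gives $|U_1(\xi)| \leq 3\|F\|_{C^4}|\xi|$, hence $|\xi|^{-1}|u_1| \leq 3\|F\|_{C^4}\cdot\eps = 3\|f\|_{C^4}$. Lemma~\ref{lem:U1-bounds} directly gives $|Du_1| + |\xi||D^2 u_1| = \eps\bigl(|DU_1| + |\xi||D^2 U_1|\bigr) \leq K_1\eps\|F\|_{C^4} = K_1\|f\|_{C^4}$. So all three quantities for $u_1$ are controlled by an absolute multiple of $\|f\|_{C^4}$.

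For the radial correction, write $w(\xi) = c\cdot h(r)$ with $c = K_1^3\eps^3 k$ and $h(r) = r - R^2/r$. Since $r\geq R$, a direct calculation gives $|h(r)| \leq r$, $h'(r) = 1 + R^2/r^2 \in [1,2]$, and $h''(r) = -2R^2/r^3$, hence $|h''(r)| \leq 2/r$. Writing the Cartesian derivatives via $D_i w = c h'(r) \xi_i/r$ and $D_{ij} w = c h''(r)\xi_i\xi_j/r^2 + c(h'(r)/r)(\delta_{ij} - \xi_i\xi_j/r^2)$, this yields $|\xi|^{-1}|w| \leq c$, $|Dw| \leq 2c$, and $|\xi||D^2 w| \leq 4c$. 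Since $c = K_1^3 k\|f\|_{C^4}^3$ and $k = 2/(2R_0^2-1)$, each of these is bounded by a constant depending only on $\|f\|_{C^4}$ and $R_0$.

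Adding the two sets of estimates and taking the larger constant as $K_2$ delivers the claim. There is no real obstacle here; the only point worth flagging is that the bound on $|\xi|^{-1}|u_1|$ requires the linear-growth estimate \eqref{eq:U1-growth} rather than Lemma~\ref{lem:U1-bounds} itself (which only controls derivatives), and that the lower bound $R \geq R_0 > 1/\sqrt{2}$ is what keeps $k$ (and hence $K_2$) finite.
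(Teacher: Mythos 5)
Your argument is correct and is exactly the paper's intended route: the paper leaves the corollary to the reader, noting only that it follows ``from the bounds on the derivatives of $U_1$'' (Lemma \ref{lem:U1-bounds} together with \eqref{eq:U1-growth}) and the explicit radial form of $u_\pm - u_1$. Your direct computation of $|h|, |h'|, |h''|$ for $h(r) = r - R^2/r$ using $r \geq R$ is the missing elementary step, and your observation that the linear-growth bound \eqref{eq:U1-growth}, not Lemma \ref{lem:U1-bounds}, is what controls the zeroth-order term $|\xi|^{-1}|u_1|$ is exactly right.
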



\section{The Parabolic Equation $\pd_{\tau} u = \el u$}
\label{sec:parabolic-equation}

In this section, we prove that there exists a solution to the parabolic equation $\pd_{\tau} u = \el u$ for all time $\tau \in [0, \infty)$ for well chosen initial and boundary conditions $u_0$ and $f$ respectively.

To avoid confusion, let us fix notations: we denote by $D_i$ the ordinary derivative with respect to $i$-th coordinate $x_i$ or $\xi_i$ of a spatial variable and by $D v$ the gradient of the function $v$ with respect to spatial variables. The notation $\pd_{\tau}$ or $\pd_t$ is reserved for the derivative with respect to time.

%
 \subsection{Initial condition and short time existence}
 
 For the initial condition to the parabolic equation, we  choose a $C^3$ function $u_0$ in $\Omega$ that stays between the  sub and supersolutions constructed in Lemma \ref{lem:subsuper} and  has bounded derivatives
 	\begin{gather*}
	u_- \leq u_0 \leq u_+,\\
	\|D_{\xi} u_0 \|_{C^{2}( \bar \Omega)} \leq M_0< \infty. 
	\end{gather*}
Moreover, we assume that $u_0$ satisfies the following symmetries and compatibility conditions
 	\begin{gather}
	u_0(r, \theta) = - u_0 (r, -\theta) = u_0 (r, \pi/N - \theta), \label{eq:symmetries}\\
	\label{eq:comp-cond}
	u_0 (R, \theta) = f(\theta), \quad \el(u_0) (R, \theta) =0.
	\end{gather} 
Let us define the domains $\mathcal O_{t} = \{ x \in \R^2\mid |x| > \sqrt{2(1-t)} \ R\}$ and  $\mathcal Q_T = \{ (x, t) \mid 0<t<T , |x| >\sqrt{2(1-t)} \ R\}$. We consider the following corresponding problem for the mean curvature flow
	\begin{subequations}
	\label{eq:MCF-problem}
	\begin{gather}
	\label{eq:MCF}
   	\pd_t v(x, t)  = g^{ij}(D v)(x, t)D_{ij} v(x, t) ,    \\
	v(x, 0) = v_0 (x),  x \in \mathcal O_0 \label{eq:MCF-boundary1}\\
	v(x, t) = \sqrt 2 f (\frac{x}{\sqrt{2 (1-t) }}),    t \geq 0, x \in \pd \mathcal O_t.\label{eq:MCF-boundary}
	\end{gather}
	\end{subequations}
for some initial condition $v_0(x)$. Standard parabolic theory assures the existence of a smooth short time solution $v(x, t)$ to the boundary value problem \eqref{eq:MCF-problem} with initial condition $v_0(x) = \sqrt 2 u_0(\frac{x}{\sqrt 2})$. Moreover, if we denote the maximal time interval in which $v$ exists by $[0, t_0)$, we have
	\begin{equation}
	\label{eq:Dv-finite}
	M_t=  \sup_{\mathcal O_t} |Dv| + \sup_{\mathcal O_t} |D^2 v| < \infty, \quad t \in [0, t_0).
	\end{equation}
 For more details, we refer to \cite{eidelman;parabolic-systems}. 
 
 In order to show that the solution $v$ exists for all time $t$, we argue by contradiction and assume that $t_0<\infty$ then show that $M_t$ is uniformly bounded for all $t \in [0, t_0)$. Standard parabolic theory implies that the solution $v$ can then be continuated past $t_0$, which contradicts the maximality of $[0, t_0)$.  The rest of this section is devoted to the proof of a uniform bound on $M_t$.
 
 The function $u$ defined by
	\begin{equation}
	\label{eq:definition-v}
        v(x,t) = \sqrt{2(1-t)} \; u\bigl(\frac{x}{\sqrt{2(1-t)}}, -\frac{1}{2}\ln(1-t)\bigr)
	\end{equation}
satisfies
	\begin{subequations}
	\label{eq:zoom-in-problem}
	\begin{gather}
	\label{eq:zoom-in}
   	\pd_\tau u = g^{ij}(D u)D_{ij} u - \xi\cdot D u+u \textrm{ in } \Omega \times [0, \tau_0)\\
	u(\xi, 0)=u_0(\xi), \xi \in \Omega, \qquad
	u(\xi, \tau) = f  \text{ on } \pd \Omega\times [0, \tau_0), \label{eq:boundary-cond}
	\end{gather}
	\end{subequations}
where we used the change of variables $\xi = \frac{x}{\sqrt{2(1-t)}}, \tau = -\frac{1}{2} \ln(1-t)$ and $\tau_0 =  -\frac{1}{2} \ln(1-t_0)$. 

\begin{lemma} 
\label{lem:bound-compact-time}
Let $u$ be a smooth solution of  \eqref{eq:zoom-in}  in $\Omega \times [0, \tau_0)$ with boundary condition \eqref{eq:boundary-cond}, then 
	\begin{equation}
	\label{eq:bound-compact-time}
	\max_{\Omega \times [0, \tau_1]} |D_{\xi}u| +
	\max_{\Omega \times [0, \tau_1]} |D_{\xi\xi}^2u| < M(\tau_1) \textrm{ if } \tau_1<\tau_0
	\end{equation}
\end{lemma}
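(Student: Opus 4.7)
The plan is to work in the fixed $(\xi,\tau)$ coordinates and use the correspondence \eqref{eq:definition-v} with the graphical mean curvature flow for $v$. Setting $t_1=1-e^{-2\tau_1}<t_0$, the rescaling factor $\sqrt{2(1-t)}$ lies between $\sqrt 2\,e^{-\tau_1}$ and $\sqrt 2$ on $[0,t_1]$, and the chain rule gives $D_\xi u(\xi,\tau)=D_xv(x,t)$ and $D^2_{\xi\xi}u(\xi,\tau)=\sqrt{2(1-t)}\,D^2_{xx}v(x,t)$. Thus it suffices to bound $|Dv|$ and $|D^2v|$ uniformly on $\mathcal Q_{t_1}$, which I would split into an interior estimate (far from $\pd\mathcal O_t$) via Ecker-Huisken and a boundary estimate on a fixed bounded annulus via standard parabolic Schauder theory.

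The foundation for both estimates is a linear growth bound in space. The sub and supersolutions $u_\pm$ from Lemma \ref{lem:subsuper} are time-independent with $\el u_+\le 0\le\el u_-$ and $u_\pm|_{\pd\Omega}=f$, so they also bound solutions of the parabolic equation $\pd_\tau u=\el u$. Combined with the hypothesis $u_-\le u_0\le u_+$, a parabolic comparison argument (applied on the bounded annuli $(\Omega\cap B_{R'})\times[0,\tau_1]$ and then letting $R'\to\infty$, legitimate because $u$ and $u_\pm$ grow at most linearly) yields $u_-\le u(\xi,\tau)\le u_+$ throughout $\Omega\times[0,\tau_0)$. Corollary \ref{cor:bound-subsuper} then gives $|u(\xi,\tau)|\le K_2|\xi|$, which transfers to $|v(x,t)|\le K_2|x|$ on $\mathcal Q_{t_0}$.

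For the interior estimate, consider $(x_0,t_0)\in\mathcal Q_{t_1}$ with $|x_0|\ge\sqrt{2(1-t_0)}(R+1)$. A parabolic ball of radius of order $|x_0|$ around $(x_0,t_0)$, truncated backward in time by $t=0$ if necessary, fits inside $\mathcal Q_{t_1}$ and carries the height bound $|v|\le 2K_2|x_0|$. Using the scale invariance of MCF and the interior gradient and second-fundamental-form estimates of Ecker-Huisken \cite{ecker-huisken;interior-estimates}, one obtains $|Dv(x_0,t_0)|+|x_0|\,|D^2v(x_0,t_0)|\le C(K_2,M_0)$, uniform in $(x_0,t_0)$. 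For the boundary estimate, work with \eqref{eq:zoom-in} on the fixed bounded cylinder $(\Omega\cap B_{R+2})\times[0,\tau_1]$: the equation is uniformly parabolic and quasilinear with time-independent $C^4$ boundary data $f$ and $C^3$ initial data $u_0$ satisfying the compatibility condition \eqref{eq:comp-cond}, so classical parabolic Schauder theory (see \cite{eidelman;parabolic-systems}) produces a uniform $C^{2,\alpha}$ bound on $u$ in this region depending on $\tau_1$, $K_2$, $M_0$, and $\|f\|_{C^4}$. The two estimates overlap on $\{R+1\le|\xi|\le R+2\}$ and together cover $\Omega\times[0,\tau_1]$.

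The main obstacle I anticipate is the temporal behavior of the interior estimate near $t=0$, where the Ecker-Huisken curvature bound typically degenerates unless initial gradient control is incorporated; fortunately $u_0\in C^3$ with $\|D u_0\|_{C^2(\bar\Omega)}\le M_0$, so short-time parabolic regularity handles a thin initial time slab $\{0\le t\le\delta\}$ while the standard Ecker-Huisken estimate takes over for $t\ge\delta$. Gluing along this temporal overlap and along the spatial overlap $\{R+1\le|\xi|\le R+2\}$ produces the constant $M(\tau_1)$; the dependence on $\tau_1$ appears through the rescaling factor $\sqrt{2(1-t)}\ge\sqrt 2\,e^{-\tau_1}$ used when transferring $D^2_{xx}v$ bounds back to $D^2_{\xi\xi}u$.
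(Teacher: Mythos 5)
Your argument is correct in outline but is a very long detour for a lemma that the paper dispatches in one line, and that detour risks running in a circle with later results. The paper's proof is simply: via the change of variables \eqref{eq:definition-v}, $D_\xi u(\xi,\tau)=D_xv(x,t)$ and $D^2_{\xi\xi}u(\xi,\tau)=\sqrt{2(1-t)}\,D^2_{xx}v(x,t)$; since $\sqrt{2(1-t)}$ is bounded on $[0,t_1]$ with $t_1=1-e^{-2\tau_1}<t_0$, the conclusion follows immediately from \eqref{eq:Dv-finite}, the finiteness $M_t=\sup_{\mathcal O_t}|Dv|+\sup_{\mathcal O_t}|D^2v|<\infty$ on the maximal interval $[0,t_0)$, which is supplied by the parabolic existence theory already invoked from \cite{eidelman;parabolic-systems}. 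Your chain-rule identities are exactly these, so the lemma is done at that point; there is no need to construct barriers, invoke Ecker--Huisken, or run Schauder estimates. The quantity $M(\tau_1)$ is not claimed to be uniform in $\tau_0$ and is permitted to depend on the whole solution on $[0,\tau_1]$, so nothing quantitative is being asked for.

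The deeper issue with the elaborate route is ordering. The comparison $u_-\le u(\cdot,\tau)\le u_+$ is the content of Lemma~\ref{lem:bound-u}, which in the paper is proved \emph{after} and \emph{using} Lemma~\ref{lem:bound-compact-time}: its proof picks the annulus radius $\eta$ using precisely the bounds \eqref{eq:bound-compact-time}, and it needs the exponential weight $e^{-\eps|\xi|-(1+\delta)\tau}$ to control behaviour at spatial infinity, not a naive exhaustion by annuli. Your justification ``legitimate because $u$ and $u_\pm$ grow at most linearly'' presupposes control on $u$ at spatial infinity for all $\tau\le\tau_1$, which is exactly the information that \eqref{eq:Dv-finite} (hence this lemma) provides. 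So either you are implicitly using \eqref{eq:Dv-finite}, in which case the rest of your machinery is unnecessary, or you are assuming what you set out to prove. What your approach genuinely buys -- a uniform bound with explicit dependence only on $K_2$, $M_0$, $\|f\|_{C^4}$ -- is the content of the subsequent Lemmas~\ref{lem:boundDu} and \ref{lem:D2uaway}, not of this lemma, and the paper deliberately postpones that work.
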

\begin{proof}
To each solution $u$ to \eqref{eq:zoom-in-problem} corresponds a solution $v$ to  the problem \eqref{eq:MCF-problem} via the formula \eqref{eq:definition-v}. This lemma is therefore a consequence of \eqref{eq:Dv-finite}.
\end{proof}


\subsection{Bounds on the function $u$}

Here, we show that  a solution $u$ to the problem \eqref{eq:zoom-in-problem}  stays between the subsolution $u_-$ and the supersolution $u_+$:
\begin{lemma} 
\label{lem:bound-u}
Let $u(\xi, \tau)$ be a $C^2$ solution to the problem \eqref{eq:zoom-in-problem}, then
	\begin{equation}
	\label{eq:squeezeu}
	u_-(\xi) \leq u(\xi, \tau) \leq u_+(\xi), \quad |\xi| \geq R, 0 \leq \tau < \tau_0.
	\end{equation}
	\end{lemma}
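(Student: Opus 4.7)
The plan is to prove the upper bound $u \le u_+$ by a parabolic maximum-principle argument applied to $w = u - u_+$ on $\Omega \times [0,\tau_1]$ for arbitrary $\tau_1 < \tau_0$; the lower bound $u_- \le u$ will follow by the symmetric argument on $u_- - u$. The key obstacles are that $\Omega$ is unbounded and that the linearization of $\el$ produces a positive zero-order term $+w$, so neither the standard interior maximum principle nor a naive comparison closes directly.

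First I would linearize the nonlinearity. Using the mean-value theorem on $g^{ij}(p)$,
\[
\el u - \el u_+ = a^{ij}D_{ij}w + (h^k - \xi_k)D_k w + w,
\]
where $a^{ij}(\xi,\tau) = g^{ij}(Du)$ and $h^k(\xi,\tau) = D_{ij}u_+ \cdot \int_0^1 \pd_{p_k} g^{ij}(sDu + (1-s)Du_+)\,ds$. Lemma \ref{lem:bound-compact-time} bounds $|Du|$ on $\Omega \times [0,\tau_1]$, so $a^{ij}$ is uniformly elliptic with bounded entries, and Corollary \ref{cor:bound-subsuper} gives $|D^2 u_+| \le K_2/|\xi| \le K_2/R$ and $|Du_+| \le K_2$, so $h^k$ is bounded. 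Since $\pd_\tau u = \el u$ and $\el u_+ \le 0$, the function $w$ satisfies $\pd_\tau w - Lw \le 0$ with $L = a^{ij}D_{ij} + (h^k - \xi_k)D_k + 1$, together with $w \le 0$ at $\tau = 0$ and $w = 0$ on $\pd\Omega$.

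Next I would eliminate the bad $+w$ term by setting $\tilde w = e^{-\tau}w$, which satisfies $\pd_\tau \tilde w - \tilde L \tilde w \le 0$ with $\tilde L = a^{ij}D_{ij} + (h^k - \xi_k)D_k$. To run the comparison on an unbounded domain I introduce the barrier $\phi(\xi,\tau) = (1+|\xi|^2)e^{C\tau}$: a direct computation yields
\[
\pd_\tau \phi - \tilde L \phi = \bigl[(C+2)|\xi|^2 - 2\, h\cdot\xi + C - 2\,\mathrm{tr}(a)\bigr] e^{C\tau},
\]
which, using the uniform bounds on $a$ and $h$, is bounded below by a positive constant $\eta > 0$ on $\Omega \times [0,\tau_1]$ for $C$ chosen large enough. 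Hence $\tilde w - \delta\phi$ is a strict subsolution of $\pd_\tau - \tilde L$ for every $\delta > 0$, and it is strictly negative on the parabolic boundary (since $\tilde w$ vanishes on $\pd\Omega$ and is nonpositive at $\tau=0$, while $\delta\phi > 0$).

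To close the argument I need that any positive supremum of $\tilde w - \delta\phi$ on $\bar\Omega \times [0,\tau_1]$ is attained at a finite point. Because $|Du|$ is bounded on $\Omega \times [0,\tau_1]$ and $f$ is bounded, $|u|$ grows at most linearly in $|\xi|$, and the same holds for $|u_+|$ by Corollary \ref{cor:bound-subsuper}; thus $|\tilde w|$ grows at most linearly while $\delta\phi$ is quadratic, which forces the supremum to be attained at a finite point in the space-time interior or at $\tau = \tau_1$. There the usual first- and second-derivative tests contradict the strict inequality $\pd_\tau(\tilde w - \delta\phi) - \tilde L(\tilde w - \delta\phi) \le -\delta\eta < 0$. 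Hence $\tilde w \le \delta\phi$ pointwise, and letting $\delta \to 0$ gives $w \le 0$, i.e.\ $u \le u_+$. The hard part will be juggling the three features (unbounded domain, unbounded drift $-\xi$, positive zero-order term) simultaneously, which is precisely what the exponential substitution and the quadratic barrier $\phi$ together handle.
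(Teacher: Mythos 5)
Your argument is correct and reaches the same conclusion, but it routes differently from the paper's proof.  The paper introduces the auxiliary function $W = \psi\,(u_- - u) - \eps$ with a multiplicative penalization $\psi = e^{-\eps|\xi| - (1+\delta)\tau}$, keeps the nonlinear difference $g^{ij}(Du_-) - g^{ij}(Du)$ intact until a first-crossing point, applies the mean value theorem only there, balances $\delta$ against $\eps$ and $K_2$ by hand, and then sends $\eps\to 0$.  You instead linearize $\el u - \el u_+$ upfront via the integral form of the mean value theorem, pass to $\tilde w = e^{-\tau}w$ to kill the problematic zero-order $+w$ term in a clean algebraic step, and control the unbounded domain by an additive quadratic barrier $\delta(1+|\xi|^2)e^{C\tau}$ rather than by the paper's multiplicative spatial factor $e^{-\eps|\xi|}$.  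Both schemes rest on the same inputs (Lemma \ref{lem:bound-compact-time} for the bound on $|Du|$ giving at most linear growth of $u$, and Corollary \ref{cor:bound-subsuper} for $u_\pm$) and both localize the contradiction via the growth of the auxiliary function; the choices differ but carry comparable weight.  Your version is a bit more modular — the linearize-then-substitute-then-barrier structure is the classical Phragm\'en--Lindel\"of template, and the $e^{-\tau}$ substitution cleanly separates the treatment of the $+w$ term from the treatment of the unbounded drift — whereas the paper's single auxiliary function handles everything at once at the cost of a messier PDE inequality \eqref{eq:pdtauW} and a more delicate choice of $\delta$.  One small point worth being explicit about: when you assert that $|u|$ grows at most linearly, that bound comes from integrating the $|Du|\le M(\tau_1)$ estimate radially from the boundary circle where $u=f$; it cannot come from the sub/supersolution squeeze $u_-\le u\le u_+$, which is the very thing under proof.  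Your proposal does say this correctly, but it is the one place a reader might be tempted to circle.
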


\begin{proof} The proof is a slight modification of the proof of a Maximum Principle for parabolic equations. Let $K_2$ be as in Corollary \ref{cor:bound-subsuper}. For small $\eps>0$,  consider the function $W = \psi (u_--u) - \eps$, where $\psi(\xi, \tau) := e^{-\eps |\xi| - (1+\delta)\tau}$ and $\delta$ is to be chosen later. Note that $W$ becomes negative as $|\xi|$ tends to infinity. We have
	$
	\pd_{\tau} \psi = -(1+\delta) \psi, 
	D_i \psi = -\eps \frac{\xi_i}{|\xi|} \psi,$ and $
	D_{ij} \psi =\Bigl[-\frac{\eps}{|\xi|} (\delta_{ij} - \frac{\xi_i\xi_j}{|\xi|^2}) + \eps^2 \frac{\xi_i \xi_j}{|\xi|^2}\Bigr] \psi.
	$ 
A computation shows that $W$ satisfies 
	\begin{multline} 
	\label{eq:pdtauW}
	\pd_{\tau} W \leq - \delta  (W+\eps) - \psi \xi \cdot DW - \eps |\xi| (W+\eps) \\
	+ \psi \bigl(g^{ij}(Du_-) - g^{ij} (Du) \bigr) D_{ij} u_- \\
	+ g^{ij} (Du) \Bigl( D_{ij} W + 2 \eps \frac{\xi}{|\xi|} \cdot DW + (W+\eps) \frac{ D_{ij} \psi}{\psi} \Bigr).
	  \end{multline}
Fix $\tau_1 \in [0, \tau_0)$. The estimates in Corollary \ref{cor:bound-subsuper} and Lemma \ref{lem:bound-compact-time} guarantee the existence of a large constant $\eta$ such that $e^{-\eps |\xi|} (|u_-|+|u|) - \eps \leq -\eps /2$ for $|\xi| \geq \eta$, $\tau \in [0, \tau_1)$. 
Denote by $A_{R, \eta}$ the annulus $\{ R < |\xi| <\eta \}$. On the boundary of the cylinder $A_{R, \eta} \times [0, \tau_1]$, we have
	\begin{align*}
	&W(\xi, \tau) 
	\leq -\eps/2 \quad & |\xi| = \eta, \tau \in [0, \tau_1]\\
	&W(\xi, \tau) \leq -\eps \quad  &|\xi| = R, \tau \in [0, \tau_1]\\
	&W(\xi, 0) \leq - \eps  \quad  &A_{R, \eta} \times \{0\}.
	\end{align*}
Suppose that $W>0$ for some point in the cylinder $A_{R, \eta} \times [0,\tau_1]$. There is a first time $\tau_2 \in (0, \tau_1)$ for which  there is a point $\hat \xi$ with  $W(\hat \xi, \tau_2) = 0$. At this point $(\hat \xi, \tau_2)$, 
	 $ 
	D W(\hat \xi, \tau_2) = 0, 
	- D^2 W(\hat \xi, \tau_2) $ 
	 is positive definite, and 
	$\pd_{\tau} W(\hat \xi, \tau_2) \geq 0$. 
 At $(\hat \xi, \tau_2)$, the equation \eqref{eq:pdtauW} yields
	\begin{multline*}
	0 
	\leq -\eps \delta - \eps^2 |\hat \xi| + \psi \bigl(g^{ij}(Du_-) - g^{ij} (Du) \bigr) D_{ij} u_- + \eps g^{ij}(Du) \frac{D_{ij} \psi}{\psi}.
	\end{multline*}
The last term can be estimated by
	$ 
	\eps g^{ij} \frac {D_{ij} \psi}{\psi}  
		 \leq \frac{\eps ^2}{|\xi|} + \frac{\eps ^2 }{|\xi|} + \eps^3 \leq 4\eps^2.
	$ 
To bound the penultimate term, note that 
	\begin{align*}
	|\pd_{p_k} g^{ij} (p) | &= \left \lvert -\frac{ \delta_{ik} p_j + \delta_{jk} p_i}{1+|p|^2} + \frac{4 p_i p_j p_k }{(1+|p|^2)^2} \right \rvert \leq 6\label{eq:pdg}\\
	(Du_- - Du) (\xi_2, \tau_2)&= \frac{\eps D\psi }{\psi^2 }(\xi_2, \tau_2)
	\end{align*}	
therefore,
	$
	\left\lvert \psi \bigl(g^{ij}(Du_-) - g^{ij} (Du) \bigr) D_{ij} u_- \right\rvert \leq 12 \sqrt 2 \eps^2 K_2$ at $ (\xi_2, \tau_2)$ by the Mean Value Theorem. We obtain
	\begin{align*}
	0  \leq -\eps \delta - \eps^2 |\xi_2| + 12\sqrt 2 \eps^2 K_2 + 4 \eps ^2  \leq \eps (-\delta + 12\sqrt 2\eps K_2+4\eps).
	\end{align*} 
The choice of $\delta = 18\eps K_2+4 \eps$ makes the last expression negative. This is not possible. We have therefore shown that $W (\xi, \tau) \leq 0$ for $ R \leq  |\xi| \leq \eta, \tau \in [0, \tau_1]$. The argument stays valid for any $\eta' >\eta$, thus 
	\[
	u_-(\xi, \tau) - u(\xi, \tau) \leq \eps e^{\eps |\xi| + (1+ 18 K_2 \eps + 4 \eps) \tau}, \quad (\xi, \tau) \in \Omega \times [0, \tau_1].
	\]
Letting $\eps$ tend to $0$, we obtain
	$
	u_-(\xi, \tau) \leq u(\xi, \tau),  $ for $(\xi, \tau) \in \Omega \times [0, \tau_1].
	$
Since $\tau_1$ is arbitrary, we have 
	\[
	u_-(\xi, \tau) \leq u(\xi, \tau),  \text{ for }(\xi, \tau) \in \Omega \times [0, \tau_0).
	\]
The inequality $u(\xi, \tau) \leq u_+(\xi, \tau)$ is proved in a similar way.
\end{proof}


\subsection{Bound on the first derivative $|Du|$} 

We prove the bound in two steps. First, we use Lemma \ref{lem:bound-u} and interior estimates from the mean curvature flow to bound $|Dv|$. The interior estimate on $|Dv|$ gives us estimates for $|Du|$ for points staying farther away as time progresses. The second step is to consider an annulus $A_{R, \eta} = \{ R < |\xi| <\eta\}$, with $\eta$ large enough to have bounds for $|Du|$ on $|\xi| = \eta$ from the first step. A maximum principle for the parabolic equation satisfied by $|Du|^2$ yields  estimates in the interior of the annulus, and therefore on the whole domain.

\begin{lemma}
\label{lem:boundDu}
Let $u(\xi, \tau)$ be a smooth solution to \eqref{eq:zoom-in}  in the cylinder $\Omega \times [0, \tau_0)$ with boundary conditions \eqref{eq:boundary-cond}. There is a constant $M_1$ depending only on the boundary conditions $u_0$ and $f$ such that
	\begin{equation}
	\label{eq:boundDu}
	|Du(\xi, \tau)| \leq M_1
	\end{equation}
for all times $\tau$ for which $u$ exists, and $|\xi| \geq R$  
\end{lemma}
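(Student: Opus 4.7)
The plan combines two estimates: Ecker--Huisken interior gradient estimates for MCF to control $|Du|$ away from $\pd B_R$, and a classical parabolic maximum principle applied to the equation satisfied by $w := |Du|^2$ on a bounded annulus adjacent to $\pd B_R$, with parabolic boundary data supplied by the outer estimate, the initial slope bound $M_0$, and the sub-/super-solutions of Lemma \ref{lem:subsuper}.

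For the outer estimate, the rescaling \eqref{eq:definition-v} intertwines the two equations and a direct computation gives $Du(\xi,\tau)=Dv(x,t)$. Lemma \ref{lem:bound-u} together with Corollary \ref{cor:bound-subsuper} yields $|u(\xi,\tau)|\le K_2|\xi|$, hence $|v(x,t)|\le K_2|x|$ on $\mathcal Q_T$. Fix $R_1>R$; at a point $(\xi_0,\tau_0)$ with $|\xi_0|\ge R_1$, I would apply the interior gradient estimates of Ecker--Huisken \cite{ecker-huisken;interior-estimates} on a parabolic cylinder $B_\rho(x_0)\times[t_0-\rho^2,t_0]$ with $\rho$ a small multiple of $\sqrt{2(1-t_0)}(R_1-R)$, chosen so that the cylinder is contained in $\mathcal Q_T$. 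Both $\sup|v|$ and $\rho$ scale like $\sqrt{2(1-t_0)}$, so the resulting bound on $|Dv(x_0,t_0)|=|Du(\xi_0,\tau_0)|$ is uniform in $\tau_0$. The constraint $t_0\ge\rho^2$ forces $\tau_0$ to exceed a small threshold $\tau_*$ depending on $R_1$ and $R$; for $\tau_0\in[0,\tau_*]$ the bound comes directly from Lemma \ref{lem:bound-compact-time}.

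Next, fix $\eta>R$ large enough that step 1 applies at $|\xi|=\eta$, and work on the annulus $A_{R,\eta}=\{R<|\xi|<\eta\}$. Differentiating \eqref{eq:zoom-in} and recognising $2\xi_l D_ku\,D_{kl}u=\xi\cdot Dw$, one finds
\[
\pd_\tau w=g^{ij}(Du)D_{ij}w+\bigl[\pd_{p_l}g^{ij}(Du)D_{ij}u-\xi_l\bigr]D_lw-2g^{ij}(Du)D_{ik}u\,D_{jk}u,
\]
with vanishing zeroth-order term and a non-positive last term. By Lemma \ref{lem:bound-compact-time} all coefficients are bounded on $A_{R,\eta}\times[0,\tau_1]$ for each $\tau_1<\tau_0$, so the classical parabolic maximum principle places $\sup w$ on the parabolic boundary. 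There $w$ is controlled as follows: by step 1 on $\{|\xi|=\eta\}\times[0,\tau_1]$; by $M_0^2$ on $A_{R,\eta}\times\{0\}$; and on $\pd B_R\times[0,\tau_1]$ by combining the tangential bound $|\pd_\theta u|=|f'(\theta)|$ with a radial bound. For the latter, Lemma \ref{lem:bound-u} gives $u_-\le u\le u_+$ with equality on $\pd B_R$, which forces $\pd_r u_-\le\pd_r u\le\pd_r u_+$ there; the explicit expressions in Lemma \ref{lem:subsuper} then supply a bound depending only on $\|f\|_{C^4}$ and $R_0$. Letting $\tau_1\uparrow\tau_0$ yields \eqref{eq:boundDu}.

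The delicate part is step 1: as $\tau\to\infty$ (equivalently $t\to 1$), the parabolic cylinder admissible for Ecker--Huisken must shrink like $\sqrt{2(1-t)}$ to stay inside $\mathcal Q_T$, while the $L^\infty$ bound on $v$ scales linearly in $x$. These two rates are matched precisely, so the resulting gradient bound is independent of $\tau$; without this homogeneity the outer estimate would degenerate as $\tau\to\infty$ and no uniform estimate could be obtained.
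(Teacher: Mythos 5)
Your overall plan---Ecker--Huisken away from $\pd B_R$ plus a parabolic maximum principle for $w=|Du|^2$ near the boundary---matches the paper's summary, but both steps differ from what the paper actually does, and Step~1 as written has a gap. You seek a bound on a \emph{fixed} circle $|\xi|=R_1$ valid for all $\tau$, and for that you invoke Ecker--Huisken on a backward-in-time cylinder $B_\rho(x_0)\times[t_0-\rho^2,t_0]$ with $\rho\sim\sqrt{2(1-t_0)}(R_1-R)$. But the gradient estimate the paper quotes (Theorem~\ref{thm:ecker-huisken}) carries the factor $\sup_{B_\rho(x_0)}\sqrt{1+|Dv(\cdot,0)|^2}$, i.e.\ the gradient at the \emph{bottom} of the time slab. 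With your cylinder that bottom sits at $t=t_0-\rho^2$, where $|Dv|$ is precisely what you are trying to control, so the argument is circular; and you cannot anchor your shrinking cylinder at $t=0$ instead, because for $t_0$ near $1$ you have $|x_0|\sim R_1\sqrt{2(1-t_0)}\to 0$, so $B_\rho(x_0)$ falls outside $\mathcal O_0=\{|x|>R\sqrt 2\}$. A genuinely time-interior gradient estimate (one with no initial-gradient factor) would fill the gap, but that is not Theorem~\ref{thm:ecker-huisken}.

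The paper avoids this entirely. It takes cylinders anchored at $t=0$ with $|x_0|\geq 2\sqrt 2R$ fixed in the $x$-variable and $\rho=|x_0|/4$, so the first factor is controlled by $M_0=\|Du_0\|_{C^2(\bar\Omega)}$ and the second is uniform since $|v|\leq K_2|x|$ and $\rho\sim|x_0|$. Translating back, this yields $|Du(\xi,\tau)|\leq K_3$ only on the retreating region $|\xi|\geq 2Re^\tau$, not on any fixed $\xi$-circle---and that is enough, because the paper's Step~2 does \emph{not} work on a bounded annulus: it applies a maximum principle for parabolic equations on unbounded domains (Krylov, Theorem~8.1.4) directly on $\Omega\times[0,\tau_1]$, whose parabolic boundary is just $\pd B_R\times(0,\tau_1)\cup\bar\Omega\times\{0\}$, so no outer-circle estimate is ever needed; the role of Step~1 is merely to guarantee that $w$ is bounded so the unbounded-domain principle applies. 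If you prefer to keep the bounded-annulus route, the clean repair is to let the outer radius depend on $\tau_1$, say $\eta=2Re^{\tau_1}$, so that the retreating estimate controls $w$ on $\{|\xi|=\eta\}\times[0,\tau_1]$ by $K_3^2$ and the resulting bound $\max(K_2^2,M_0^2,K_3^2)$ is independent of $\tau_1$. Your derivation of the evolution equation for $w$ and your treatment of the data on $\pd B_R$ (tangential from $f'$, radial squeezed between $\pd_r u_-$ and $\pd_r u_+$ using $u_-\leq u\leq u_+$ with equality at $r=R$) are correct; the latter usefully spells out how the paper's constant $K_2^2$ on the parabolic boundary arises, a point the paper leaves implicit.
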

\begin{proof}

\emph{ Step 1. Away from the boundary. }  \\
Let $v(x, t)$ be the corresponding solution to the MCF defined by equation \eqref{eq:definition-v}. We use the interior estimate  below established by Ecker and Huisken: 

\begin{theorem}[Theorem 2.3, p551 \cite{ecker-huisken;interior-estimates}]
\label{thm:ecker-huisken} 
The gradient of the height function $v$ satisfies the estimate
	\begin{multline} 
		\label{eq:ecker-huisken}
	\sqrt{1+|Dv(x_0, t)|^2} \leq C_1(n)\sup_{B_{\rho}(x_0)} \sqrt{1+|Dv(x, 0)|^2}\\
  	\exp[C_2(n) \rho^{-2} \sup_{[0,T]}(\sup_{B_{\rho}(x_0)\times[0, T]} v -v(x_0, t))^2] 		
	\end{multline}
where $0 \leq t \leq T, B_{\rho}(x_0)$ is a ball in $\Omega$ and $n$ is the dimension of the graph of  $v$.
\end{theorem}

The first factor is estimated by
	\begin{equation}
	\label{eq:Dv0}
	 |Dv_0| = |Du_0| < \| Du_0 \|_{C^2(\bar\Omega)} =M_0.
	\end{equation}
To bound the second factor,  we recall that 
	$
	u_- (\xi) \leq u(\xi, \tau) \leq u_+ (\xi),
	$
therefore Corollary \ref{cor:bound-subsuper} gives us
	\begin{equation}
	\label{eq:boundv}
	|v(x,t)| \leq K_2 |x|.
	\end{equation}
The function $v(x, t)$ is defined on the domain $ \{ (x, t) \mid 0\leq t<t_0, |x| \geq \sqrt{2(1-t)} \ R\}$, in particular, it is defined for  $|x| > R \sqrt{2}$ at any time $0 \leq t < t_0$. Let $|x_0| \geq 2R \sqrt 2$ and $\rho= |x_0|/4$. The estimates \eqref{eq:Dv0} and \eqref{eq:boundv} yield, 
	\begin{align*}
	\sqrt{1+|Dv(x_0, t)|^2}& \leq C \text{ for $|x_0| \geq 2\sqrt 2 R$ and $t \in [0, t_0)$}.
	\end{align*}
Note that 
	$
	Du\bigl(\frac{x}{\sqrt{2(1-t)}}, -\frac{1}{2} \ln(1-t)\bigr) = Dv(x, t).
	$
With the change of variables $\tau = -\frac{1}{2} \ln(1-t)$ and $\xi =\frac{x}{\sqrt{2(1-t)}} = \frac{x e^{\tau}}{\sqrt{2}}$, we get
\begin{equation}
\label{eq:Duaway}
|Du(\xi, \tau)| \leq K_3, \quad \textrm{ for } |\xi| \geq 2 R e^{\tau}, \tau \in [0, \tau_0).
\end{equation}
Withouth loss of generality, we can assume that $K_3 \geq M_0 = \|  Du_0 \|_{C^2(\bar \Omega)}$.\\

\emph{Step 2: Up to boundary, using a Maximum Principle.} \\
A computation shows that the function $w=|Du|^2$ satisfies the parabolic equation
	\[
	\pd_{\tau}  w \leq  g^{ij}(Du) D_{ij} w + g^{ij}_{p_l}(Du) D_{ij} u\ D_l w- \xi \cdot D w,
	\]
where $ g^{ij}(p) = \delta_{ij}- \frac{p_ip_j}{1+|p|^2}$ and  $g^{ij}_{p_l} = \frac{\pd}{\pd p_l} g^{ij}$.
Denote by $	Z_l = g^{ij}_{p_l}(Du)D_{ij} u$, then 
	\begin{equation}
	\label{eq:parabolicDu}
	\pd_{\tau} w \leq g^{ij}(Du) D_{ij} w + Z_l D_l w - \xi \cdot D w.
	\end{equation}
Choose $0< \tau_1 < \tau_0$. 
It follows from Lemma \ref{lem:bound-compact-time} that
the coefficients in \eqref{eq:parabolicDu} are bounded  on $\Omega \times [0, \tau_1]$. Moreover, from \eqref{eq:Duaway}, we know that $w$ is bounded in the cylinder $ \Omega \times [0, \tau_1]$. A Maximum Principle for parabolic equations on unbounded domains (see Theorem 8.1.4 in \cite{krylov;lectures-elliptic-parabolic} for example) implies that the function $w$ in $\Omega \times [0, \tau_1)$ is smaller than its supremum on the parabolic boundary $[ \pd \Omega \times (0, \tau_1)] \cup [ \bar \Omega \times \{0\}]$. Since $\tau_1$ is arbitrary, the result is valid on the time interval $[0, \tau_0)$ as well,
	\[
	\sup_{\Omega \times (0, \tau_0)} w \leq \sup_{[ \pd \Omega \times (0, \tau_0)] \cup [ \bar \Omega \times \{0\}]} w \leq \max(K_2^2, M_0^2). \qedhere
	\]
	
 \end{proof}

\subsection{Bound on $|D^2u|$ and $|D^3 u|$ away from boundary}

\begin{lemma} 
\label{lem:D2uaway}
Let $u$ be a solution of the problem \eqref{eq:zoom-in-problem}  in $\Omega \times [0, \tau_0)$, then we have the following bounds for  the second and third derivatives of $u$:
\begin{equation}
\label{eq:D2uaway}
|D_{\xi\xi}^2 u(\xi, \tau)| \leq e^{-\delta_0} C+  \max_{\Omega \times [0, \delta_0]} |D_{\xi\xi}^2u|, |\xi| \geq e (R+1),  \tau \in [0, \tau_0)
\end{equation}
\begin{equation}
\label{eq:D3uaway}
|D_{\xi\xi}^3 u(\xi, \tau)| \leq e^{-2\delta_0} C+ \max_{\Omega \times [0, \delta_0]} |D_{\xi\xi}^3 u|, |\xi| \geq e (R+1),  \tau \in [0, \tau_0)
\end{equation}
where $\delta_0 = \min(\tau_0/2, 1)$ and $C$  denotes different constants depending only on $\delta_0$ , $\| Du_0 \|_{C^2(\bar \Omega)}$ and $\|f\|_{C^4}$.
\end{lemma}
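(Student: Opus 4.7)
For $\tau \leq \delta_0$ the claimed bounds are immediate since the right-hand sides already contain $\max_{\Omega\times[0,\delta_0]}|D^2_{\xi\xi} u|$ and $\max_{\Omega\times[0,\delta_0]}|D^3_{\xi\xi\xi} u|$, so I focus on the range $\tau > \delta_0$. The plan is to pass through the correspondence with the mean curvature flow $v$ defined by \eqref{eq:definition-v}, and apply the interior estimates of Ecker--Huisken \cite{ecker-huisken;interior-estimates} for the second fundamental form and its covariant derivative.

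A direct differentiation of \eqref{eq:definition-v} yields $D_\xi u(\xi,\tau) = D_x v(x,t)$, and
\[
|D^2_\xi u(\xi,\tau)| = \sqrt{2(1-t)}\,|D^2_x v(x,t)|, \qquad |D^3_\xi u(\xi,\tau)| = 2(1-t)\,|D^3_x v(x,t)|,
\]
with $t = 1-e^{-2\tau}$ and $x = \sqrt{2(1-t)}\,\xi$. Writing $\sqrt{2(1-t)} = \sqrt{2}\,e^{-\tau}$ and $2(1-t) = 2e^{-2\tau}$, the exponential factors $e^{-\delta_0}$ and $e^{-2\delta_0}$ in the lemma's conclusion come from these prefactors once $\tau \geq \delta_0$, provided I can control $|D^2_x v|$ and $|D^3_x v|$ pointwise at $(x_0, t_0) = (\sqrt{2(1-t_0)}\,\xi_0,\; 1 - e^{-2\tau_0})$ under the assumption $|\xi_0| \geq e(R+1)$.

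To bound $|D^2_x v|$ and $|D^3_x v|$ there, I apply Ecker--Huisken's interior estimates for $|A|$ and $|\nabla A|$ on a parabolic cylinder $B_\rho(x_0) \times [t_*, t_0]$ chosen so that the cylinder lies inside the moving domain $\{(x,s) : |x| > R\sqrt{2(1-s)}\}$. The hypothesis $|\xi_0| \geq e(R+1)$ gives enough room to take $\rho = c_R \sqrt{1-t_0}$ and $t_0 - t_* = c_R'(1-t_0)$ for constants $c_R, c_R'$ depending only on $R$. The ingredients that Ecker--Huisken require are available: the uniform gradient bound $|Dv| = |Du| \leq M_1$ from Lemma \ref{lem:boundDu}; the linear height bound $|v(x,t)| \leq K_2|x|$ from Corollary \ref{cor:bound-subsuper} combined with Lemma \ref{lem:bound-u}; and the initial $C^3$-control inherited from $\|Du_0\|_{C^2(\bar\Omega)} \leq M_0$ together with $\|f\|_{C^4} \leq \eps$. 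By the parabolic scale-invariance of MCF, the rescaling $\tilde x = x/\sqrt{1-t_0}$, $\tilde t = (t-t_0)/(1-t_0)$ converts this into a fixed-size cylinder on which the rescaled flow has uniformly bounded gradient and linearly bounded height, so Ecker--Huisken furnishes pointwise bounds on $|A|$ and $|\nabla A|$ that unrescale to $|D^2_x v(x_0, t_0)|$ and $|D^3_x v(x_0, t_0)|$ bounded by a constant depending only on $R$, $\delta_0$, $M_0$, and $\|f\|_{C^4}$; substitution into the scaling identities above completes the proof.

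The main obstacle is the cylinder selection: because the MCF domain shrinks as $s \to 1$, one is forced to take the cylinder of size comparable to $\sqrt{1-t_0}$, and naive application of interior estimates would give curvature bounds that blow up like $1/\sqrt{1-t_0}$. The parabolic scale-invariance of MCF is exactly what redeems the argument, since after rescaling the cylinder is fixed-size and the Ecker--Huisken estimates apply uniformly in $\tau_0$; the apparent blow-up $1/\sqrt{1-t_0}$ in $|D^2_x v|$ is then absorbed into the harmless factor $\sqrt{2(1-t_0)} = \sqrt{2}\,e^{-\tau_0} \leq \sqrt{2}\,e^{-\delta_0}$ when transferring back to $|D^2_\xi u|$, and likewise for $|D^3_\xi u|$.
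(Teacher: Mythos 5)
Your proposal is correct and relies on the same ingredients as the paper's proof --- the interior curvature estimates of Ecker--Huisken, the gradient bound from Lemma \ref{lem:boundDu}, and the scaling identities relating $D^k_\xi u$ to $D^k_x v$. The organization is different, though. The paper applies the Ecker--Huisken curvature bound on a \emph{fixed} ball $B_1(x_0)$ with $|x_0|\geq\sqrt{2}(R+1)$, starting the clock at $t=0$; after transferring back to $u$, this produces the estimate $|D^2_{\xi\xi}u(\xi,\tau)|\leq e^{-\tau}C(\delta_0)$, but only on the time-dependent region $|\xi|\geq e^\tau(R+1)$, which drifts to infinity. The paper then closes the gap by a separate step: since the equation $\pd_\tau u=\el u$ is autonomous, the time-shifted function $\tilde u(\xi,s)=u(\xi,s+\tau_1)$ is again a solution satisfying the same gradient bound, and applying the already-established inequality to $\tilde u$ at $s=\delta_0$ gives a bound on the fixed annulus $|\xi|\geq e^{\delta_0}(R+1)$ for every $\tau=\tau_1+\delta_0\geq\delta_0$. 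You instead apply Ecker--Huisken directly on a parabolic cylinder whose spatial radius and temporal extent both scale like $\sqrt{1-t_0}$, so the resulting $|A|\sim(1-t_0)^{-1/2}$ is absorbed at once by the prefactor $\sqrt{2(1-t_0)}$ in $|D^2_\xi u|=\sqrt{2(1-t_0)}|D^2_x v|$. These two routes are mathematically the same move in different coordinates --- a time translation in $\tau$ corresponds precisely to a parabolic rescaling in $(x,t)$ --- but your version merges the paper's two steps into one and makes the scale-invariance heuristic explicit, which is a tidy way to present it. One small imprecision: you state that $c_R,c_R'$ depend only on $R$, but ensuring $t_*\geq 0$ when $\tau$ is only slightly larger than $\delta_0$ forces $c_R'$ (or, if you instead set $t_*=\max(0,t_0-c_R'(1-t_0))$, the resulting constant) to depend on $\delta_0$ as well; since you do acknowledge $\delta_0$-dependence in the final constant, this is only a matter of where in the argument the dependence enters, not a genuine gap.
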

\begin{remark}
\label{rem:delta0}
The inequality \eqref{eq:D2uaway} does depend on $\tau_0$ through the constant $\delta_0$; however the dependence is very loose. In the proof, we will see that $\delta_0$ can be chosen to be $\min(1, s/2)$ for  any time $0<s< \tau_0$. Therefore, the estimate is uniform in time once we know that the solution exists past some small time $s$.  
\end{remark}
\begin{remark}
 $\max_{\Omega \times [0, \delta_0]} |D_{\xi\xi}^2u|$ and $\max_{\Omega \times [0, \delta_0]} |D_{\xi\xi}^3u|$ depend on $u$ on the right hand side, but we are primarily interested in finding a bound for $\tau$ approaching $\tau_0$ so there is no need for a better estimate near initial time.
\end{remark}

\begin{proof} Let us tackle the second derivative first. 
The estimate for $\tau\leq \delta_0$ is immediate. For $\tau>\delta_0$,  we use the change of variables $t = 1- e^{-2 \tau}$ and look at the function 
	\begin{equation}
	\tag{\ref{eq:definition-v}}
        	v(x,t) = \sqrt{2(1-t)} \; u\bigl(\frac{x}{\sqrt{2(1-t)}}, -\frac{1}{2}\ln(1-t)\bigr),
	\end{equation}
which satisfies the mean curvature flow \eqref{eq:MCF}. We are interested in time $t$ such that  $t>1-e^{-2\delta_0}$. We can use the following interior estimate on the curvature from Ecker-Huisken  \cite{ecker-huisken;interior-estimates} as long as $v$ exists in the domains mentioned. Define $t_0$ to be $1- e^{-2 \tau_0}$.

\begin{lemma}[Corollary 3.2 \cite{ecker-huisken;interior-estimates}]
Let  $0<t_1<t_0$ and $\eta>0$ and $0\leq\theta<1$. For $t\in [0, t_1]$, we have the estimate
\[
\sup_{B_{\theta \eta} (x_0)} |A|^2(t) \leq c(n) (1 -\theta^2) ^{-2} \left( \frac{1}{\eta^2}+\frac{1}{t}\right) \sup_{B_{\eta}(x_0)\times[0, t]} (1+|Dv|^2)^2,
\]
where $B_{\eta}(x_0)$ denotes the ball of radius $\eta$ centered at $x_0$ in the plane. 
\end{lemma}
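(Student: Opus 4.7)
For $\tau \in [0,\delta_0]$ both inequalities hold trivially, since their right-hand sides already contain $\max_{\Omega \times [0,\delta_0]} |D^2_{\xi\xi}u|$ and $\max_{\Omega \times [0,\delta_0]} |D^3_{\xi\xi}u|$. So suppose $\tau \in (\delta_0, \tau_0)$. The plan is to translate the question to the MCF graph $v(x,t)$ defined by \eqref{eq:definition-v} and invoke the quoted Ecker--Huisken interior curvature estimate.

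Differentiating the substitution $v(x,t) = \sqrt{2(1-t)}\, u(x/\sqrt{2(1-t)}, \tau)$ yields $Dv = Du$ together with the scaling identities
\[
D^2_\xi u(\xi,\tau) = \sqrt{2}\,e^{-\tau}\, D^2_x v(x,t), \qquad D^3_\xi u(\xi,\tau) = 2\,e^{-2\tau}\, D^3_x v(x,t).
\]
Since $\tau \geq \delta_0$, the prefactors satisfy $\sqrt 2\,e^{-\tau} \leq \sqrt 2\,e^{-\delta_0}$ and $2e^{-2\tau} \leq 2e^{-2\delta_0}$. Hence it suffices to show that $|D^2_x v|$ grows at most like $e^\tau$ and $|D^3_x v|$ grows at most like $e^{2\tau}$. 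By Lemma \ref{lem:boundDu}, $|Dv| = |Du| \leq M_1$ uniformly, so $|D^2_x v|$ is comparable to the length $|A|$ of the second fundamental form and $|D^3_x v|$ is comparable to $|\nabla A| + |A|^2$, with constants depending only on $M_1$.

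I will apply the quoted estimate at the MCF point $x_0 = \sqrt{2}\,e^{-\tau}\xi_0$ corresponding to $\xi_0$ with $|\xi_0| \geq e(R+1)$. Because the domain $\mathcal O_s$ shrinks as $s$ decreases, the estimate cannot be used from the initial time $0$ once $\tau$ is large; instead I shift the initial time. Define $t_s$ by $\sqrt{2(1-t_s)}\,R = \tfrac12|x_0|$ and set $\eta = \tfrac14|x_0|$. With these choices, $B_\eta(x_0) \subset \mathcal O_s$ for every $s \in [t_s,t]$, and a direct computation using $|\xi_0| \geq e(R+1)$ shows that the elapsed time $t - t_s$ is bounded below by a positive constant multiple of $e^{-2\tau}$. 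Applying Corollary 3.2 to the time-shifted flow, together with the uniform bound on $|Dv|$, yields $|A|^2(x_0,t) \leq C\bigl(1/\eta^2 + 1/(t-t_s)\bigr) \leq C\,e^{2\tau}$, whence $|D^2_x v(x_0,t)| \leq C\,e^\tau$. Combined with the scaling identity this gives the claimed bound for $|D^2_\xi u|$, with the constant written in the form $e^{-\delta_0}\tilde C$.

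The bound on $|D^3_\xi u|$ is obtained in the same way, replacing Corollary 3.2 with the analogous Ecker--Huisken interior estimate for $|\nabla A|$ (a higher-order companion to Corollary 3.2), which under the same geometric setup gives $|\nabla A|(x_0,t) \leq C\,e^{2\tau}$; the factor $2e^{-2\tau}$ from the scaling then produces the $e^{-2\delta_0}$ prefactor. The main technical obstacle is the coordinated choice of the shifted initial time $t_s$ and the ball radius $\eta$, both of which shrink like $e^{-\tau}$, so that the hypotheses of the interior estimates remain satisfied uniformly in $\tau \in (\delta_0, \tau_0)$ with constants depending only on $\delta_0$, $R$, $\|Du_0\|_{C^2(\bar\Omega)}$ and $\|f\|_{C^4}$.
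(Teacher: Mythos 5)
First, a mismatch you should be aware of: the statement you were handed is Ecker--Huisken's Corollary~3.2, an external interior curvature estimate that the paper simply quotes inside another proof and never proves. Your argument does not prove that estimate either --- it \emph{invokes} it. What you have actually written is a proof of Lemma~\ref{lem:D2uaway} (the bounds \eqref{eq:D2uaway}--\eqref{eq:D3uaway} on $|D^2_{\xi\xi}u|$ and $|D^3_{\xi\xi}u|$ away from the boundary), which is the result whose proof contains the quoted corollary; I review it as such.

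As a proof of Lemma~\ref{lem:D2uaway} your argument is correct and rests on the same mechanism as the paper's: apply the Ecker--Huisken interior estimates to the graph flow $v$, observing that both the spatial scale available around $x_0=\sqrt2\,e^{-\tau}\xi_0$ and the square root of the parabolic time available before $(x_0,t)$ are of order $e^{-\tau}$, so the resulting bound $|A|\lesssim e^{\tau}$ exactly cancels the factor $\sqrt2\,e^{-\tau}$ in $D^2_{\xi\xi}u=\sqrt2\,e^{-\tau}D^2_{xx}v$. The packaging, however, is genuinely different. The paper fixes $\eta=1$, applies the corollary from $t=0$ (legitimate for $|x_0|\ge\sqrt2(R+1)$, since $\mathcal O_0$ is the smallest of the domains $\mathcal O_s$), obtains the decay estimate \eqref{eq:D2u-first-step} valid only on the \emph{receding} region $|\xi|\ge e^{\tau}(R+1)$, and then converts it into a bound on a fixed annulus by time-translating the solution $u$ itself ($\tilde u(\xi,s)=u(\xi,s+\tau_1)$). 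You instead work in one shot at a point of the fixed region $|\xi_0|\ge e(R+1)$ --- where, as you correctly note, $x_0$ eventually leaves $\mathcal O_0$, so the estimate cannot be run from time $0$ --- by shifting the initial time of the interior estimate to $t_s$ with $1-t_s=|x_0|^2/(8R^2)$ and taking $\eta=|x_0|/4$; your computation $t-t_s=e^{-2\tau}\bigl(|\xi_0|^2/(4R^2)-1\bigr)\ge c_R\,e^{-2\tau}$ (using $e(R+1)>2R$) is right, and $B_\eta(x_0)\subset\mathcal O_s$ for $s\in[t_s,t]$ as claimed. The two devices are the same idea viewed in the two coordinate systems. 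Your route is more direct; the paper's buys the intermediate estimate $|D^2_{\xi\xi}u|\le e^{-\tau}C$ on $|\xi|\ge e^{\tau}(R+1)$, which is reused in the closing Remarks to get $|D^2u|\le C/|\xi|$ and the conical asymptotics, so you would need a small variant of your argument to recover that. One loose end: when $|x_0|>2\sqrt2 R$ your formula gives $t_s<0$; there you should simply take $t_s=0$, where $1/t\le(1-e^{-2\delta_0})^{-1}$ for $\tau\ge\delta_0$ and the estimate goes through unchanged.
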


First, we show that $v$ exists in $B_{\eta}(x_0)\times[0, t_1]$ for $\eta=1$, $\theta=1/2$ and $|x_0|\geq\sqrt{2}(R+1)$: if $(x, t)  \in B_{\eta}(x_0) \times [0,t_1]$, then 
$
|x| \geq |x_0| - \eta,
$
and the corresponding  $\xi=\frac{x}{\sqrt{2(1-t)}}$ satisfies
$ 
|\xi| \geq \frac{|x_0| - \eta}{\sqrt{2(1-t)}} \geq \frac{\sqrt{2}(R+1) - 1}{\sqrt{2(1-t)}} \geq R.
$ 
Hence, for $|x_0| >\sqrt{2}(R+1)$ and all $t\in [1-e^{-2\delta_0}, t_0)$, 
\[
|A|^2(x_0, t) \leq \sup_{B_{1/2}(x_0)} |A|^2(t) \leq C \frac{16}{9}(1+\frac{1}{1-e^{-2\delta_0}}) C \leq C(\delta_0).
\]
Bounds on the second fundamental form $A$ and on the first derivative $Dv$ yield a bound on the second derivative of $v$:
\[
|D^2_{xx} v (x_0, t)| \leq C(\delta_0), \quad |x_0| \geq \sqrt{2}(R+1), t\in [1-e^{-2\delta_0}, t_0),
\]
where $C(\delta_0)$ denotes a different constant, also dependent on $\delta_0$. Therefore
	\begin{equation}
	\label{eq:D2u-first-step}
	\begin{split}
	|D^2_{\xi\xi}u(\xi, \tau)|& = \sqrt{2}  e^{-\tau}|D^2_{xx}v( \sqrt{2} e^{-\tau} \xi, 1-e^{-2\tau})| \\
	& \leq e^{-	\tau}C(\delta_0), \quad |\xi|\geq e^{\tau}(R+1), \tau \in [\delta_0, \tau_0).
	\end{split}
	\end{equation}

In order to prove an estimate independent of time and outside a fixed annulus, note that for all $\tau_1 \geq0$, the function $\tilde u ( \xi, s) = u( \xi, s +\tau_1)$ is also a solution of $\pd_{s} \tilde u = \el \tilde u$. Moreover,  $|D_{\xi} \tilde u (\xi, s)| \leq M_1$  for $(\xi, s) \in \Omega \times [0, \tau_0 - \tau_1)$ so we have an estimate similar to \eqref{eq:D2u-first-step} in this case also
	\[
	|D^2_{\xi\xi} \tilde u(\xi, s) | \leq e^{-s}C(\delta_0), \quad |\xi|\geq e^{s}(R+1), s \in [\delta_0, \tau_0-\tau_1).
	\]
Let $s = \delta_0$ and $\tau = \tau_1 +s$, the inequality above yields
	\[
	|D^2_{\xi\xi} u(\xi, \tau) | \leq e^{- \delta_0}C(\delta_0), \quad |\xi|\geq e^{\delta_0}(R+1)
	\]
for all $\tau_1 \geq 0$, i.e. for all $\tau \geq \delta_0$.

The bound on the third derivative is proved in a similar fashion using 
	\[
	\sup_{B_{\theta \eta} (x_0)} |\nabla A|^2(t) \leq c  \left( \frac{1}{\eta^2}+\frac{1}{t}\right)^2,
	\]
with the constant $c$ depending on $\theta$ and $ \sup_{B_{\eta}(x_0)\times[0, t]} (1+|Dv|^2)$ from Theorem 3.4 in \cite{ecker-huisken;interior-estimates}.  
\end{proof}


\subsection{H\"older continuity of $D_{\xi} u$ near boundary} 

Let the time $\tau_1 <\tau_0$ and denote by 
\begin{gather*}
A_{R, R_1} \textrm{ the annulus } \{ \xi \R^2 \mid R< |\xi|<R_1\}, \\
Q_{R, R_1, \tau_1} \textrm{ the domain } A_{R, R_1} \times (0, \tau_1),\\
\Gamma_{R, R_1, \tau_1} \textrm{ the set } (A_{R, R_1} \times \{0\}) \cup( (\pd B_{R_1} \cup \pd B_{R})\times [0,\tau_1]),\\
M_1=\max_{\Omega\times [0, \tau_0)} |D_{\xi} u|
\end{gather*}
For a domain $Q \in  \R^2 \times [0, \infty)$, denote by $C^{2, 1}(\bar Q) $ the set of all continuous functions in $\bar Q$ having continuous derivatives $D_{\xi}$, $D^2_{\xi\xi} u, \pd_{\tau} u $ in $\bar Q$ and by $C^{\alpha, \alpha/2}(Q)$  the set of functions on $Q$ that are $\alpha$-H\"older continuous in $\xi$ and $\alpha/2$- H\"older continuous in $\tau$.

We use the following theorem by Lady{\v{z}}enskaja et al.

\begin{theorem} [Theorem 2.3, p533 \cite{ladyzhenskaja;linear-parabolic-type}]
\label{lem:Duholder}
Let $u$ be a solution of equation
	\begin{equation}
	\label{eq:parabolic-lady}
	\pd_{\tau} u -a_{ij}(\xi, \tau, u, D_{\xi}u) D_{\xi_i \xi_j} u + a(\xi, \tau, u, D_{\xi}u) =0 
	\end{equation} 
belonging to $C^{2, 1}(\bar Q \rrt)$. Suppose that equation \eqref{eq:parabolic-lady} is parabolic with respect to $u(\xi, \tau)$, i.e the inequality
\[
\eta |\zeta|^2 \leq a_{ij}(\xi, \tau, u(\xi, \tau), D_{\xi} u(\xi, \tau))\zeta_i\zeta_j \leq \mu |\zeta|^2, \quad \eta, \mu>0
\]
is fulfilled and the functions $a_{ij}(\xi, \tau, u, D_{\xi} u)$ and $a(\xi, \tau, u, D_{\xi} u)$ are continuous and continuously differentiable with respect to all of their arguments in the domain
\[
\left\{ (\xi, \tau)\in \bar{Q}\rrt, |u| \leq \max_{Q\rrt}|u(\xi, \tau)|, \ |p| \leq M_1=\max_{Q\rrt} |u_{\xi}(\xi, \tau)| \right\}.
\]
Suppose also that there is a constant $M_2$ so that
\begin{multline*}
\max_{i, j, k=1, 2} \max_{Q\rrt} \Bigl(|a_{ij}(\xi, \tau, u(\xi, \tau), D_\xi u(\xi, \tau))|; |a| ;\left|\frac{\pd a_{ij}}{\pd D_{\xi_k} u}\right|;\left|\frac{\pd a}{\pd D_{\xi_i} u}\right|;\\
\left| \frac{\pd a}{\pd \tau}\right|; \left|\frac{\pd a_{ij}}{\pd \tau}\right|;\left|\frac{\pd a_{ij}}{\pd \xi_k}\right|;\left|\frac{\pd a}{\pd \xi_k}\right|;\left|\frac{\pd a_{ij}}{\pd u}\right|;\left|\frac{\pd a}{\pd u}\right|\Bigr)\leq M_2,
\end{multline*}
Then for some $\alpha>0$, the quantity $\|D_{\xi} u\|_{C^{\alpha, \alpha/2}(Q_{R, R_1, \tau_1})}$  is estimated by a constant depending only on $M_1$, the quantities $\eta, \mu$ and $M_2$, the norm $\| u \|_{C^{2, 1}(\Gamma\rrt)}$ and the norm in $C^2$ of the functions defining the boundary of $A_{R, R_1}$. These same quantities also determine the exponent $\alpha$. 
\end{theorem}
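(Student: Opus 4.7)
The stated result is the classical up-to-boundary Hölder estimate for the spatial gradient of a quasilinear parabolic equation, cited verbatim from Ladyženskaja--Solonnikov--Uralceva, so the ``proof'' is really a pointer to that reference. Still, let me sketch how I would carry out the argument.

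The plan is to reduce the quasilinear equation \eqref{eq:parabolic-lady} to a linear problem for each spatial derivative. Fix a coordinate direction $k\in\{1,2\}$ and set $w=D_{\xi_k}u$. Differentiating \eqref{eq:parabolic-lady} in $\xi_k$ and using the chain rule, one obtains a linear non-divergence parabolic equation
\[
\pd_\tau w - a_{ij}(\xi,\tau,u,D_\xi u)\,D_{ij}w = b_l\,D_l w + c\,w + F,
\]
where $b_l,c,F$ are measurable functions built from the partial derivatives $\pd_{\xi_k}a_{ij}$, $\pd_u a_{ij}$, $\pd_{p_l}a_{ij}$, the analogous derivatives of $a$, and the unknown $D^2_{\xi\xi}u$. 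By the hypotheses on $M_1$ and $M_2$, together with the $C^{2,1}(\bar Q\rrt)$ assumption on $u$, all of $b_l,c,F$ are uniformly bounded on $Q\rrt$ by constants depending only on $M_1,M_2,\eta,\mu$, and the principal part $a_{ij}$ is uniformly elliptic with ellipticity constants $\eta,\mu$ and bounded measurable coefficients.

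For the interior estimate, I would apply the Krylov--Safonov Hölder bound for linear uniformly elliptic/parabolic equations in non-divergence form with bounded measurable coefficients (this is the tool used in the LSU book, though carried out there by De Giorgi--Moser iteration). This yields $w\in C^{\alpha,\alpha/2}$ on subcylinders of $Q\rrt$ at positive distance from the parabolic boundary $\Gamma\rrt$, with exponent $\alpha=\alpha(\eta/\mu)>0$ and norm controlled by $\|w\|_\infty\leq M_1$ together with $\eta,\mu,M_2$. To extend the estimate up to $\Gamma\rrt$, I would flatten the smooth lateral parts $\pd B_R\cup\pd B_{R_1}$ and decompose $D_\xi u$ near these arcs into tangential and normal components. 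The tangential derivatives are controlled directly by the $C^{2,1}$ data of $u|_{\Gamma\rrt}$, while \eqref{eq:parabolic-lady} determines the normal second derivative in terms of tangential derivatives and $\pd_\tau u$. Combined with the boundary Krylov-type Hölder estimate for the linear equation, this delivers the Hölder bound for $D_\xi u$ up to $\Gamma\rrt$, with exponent $\alpha$ depending on the boundary regularity.

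The main obstacle is the boundary portion of the argument: interior Hölder regularity of the gradient via linearization and Krylov--Safonov is essentially routine, but the up-to-boundary estimate requires systematic use of the $C^{2,1}$ compatibility data on $\Gamma\rrt$ and the construction of suitable barriers after flattening, which is precisely the content of the LSU machinery one is invoking.
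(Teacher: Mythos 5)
The paper itself gives no proof of this statement: it is quoted verbatim (with a page reference) from Ladyženskaja--Solonnikov--Ural'ceva, and the paper simply invokes it as a black box to obtain the interior-plus-boundary H\"older estimate \eqref{eq:Duholder}. You correctly recognize this, and your sketch --- differentiate \eqref{eq:parabolic-lady} in $\xi_k$ to get a linear uniformly parabolic equation for $w = D_{\xi_k} u$ with bounded measurable lower-order coefficients (uniformity coming from $M_1$, $M_2$, $\eta$, $\mu$), then apply De~Giorgi--Moser / Krylov--Safonov type H\"older regularity, and finally flatten the smooth lateral boundary and use the boundary data in $C^{2,1}(\Gamma\rrt)$ plus the equation to control normal derivatives --- is a faithful summary of how the LSU machinery actually proceeds. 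There is nothing to compare against in the paper, so the only substantive remark is that LSU (1967) uses a De~Giorgi-type iteration rather than Krylov--Safonov (1980), a point you already flag parenthetically; this does not affect the validity of the sketch.
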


The coefficients of the parabolic equation 
	\begin{equation}
	\pd_\tau u = g^{ij}(D u)D_{ij} u - \xi\cdot D u+u \tag{\ref{eq:zoom-in}}
	\end{equation}
in $Q\rrt$ satisfy the hypotheses of Theorem \ref{lem:Duholder}. We therefore have a bound on $\| D_{\xi} u\|_{C^{\alpha, \alpha/2}( \bar A\rrt \times [0, \tau_1])}$ that  depends on $\| f \|_{C^4}$, $R$, $M_1$ and $\| u \|_{C^{2, 1}(\Gamma \rrt)}$. From Lemma \ref{lem:D2uaway}, we can estimate this last quantity uniformly in time, therefore,
	\begin{equation}
	\label{eq:Duholder}
	\| D_{\xi} u\|_{C^{\alpha, \alpha/2}( \bar A_{R, R_1} \times [0, \tau_0))} \leq K_4<\infty,
	\end{equation}
where $K_4$ only depends on time through $\delta_0 = \min(1, \tau_0/2)$.

\subsection{H\"older continuity of $D_{\xi\xi} u$ near boundary}

The coefficients $g^{ij} (Du)$ of \eqref{eq:zoom-in} are  in $C^{\alpha, \alpha/2}(\bar A_{R, R_1} \times [0, \tau_0))$ hence we can apply standard theory  for linear parabolic equations with H\"older coefficients (Theorem 5.2 p320 \cite{ladyzhenskaja;linear-parabolic-type} for example) to obtain
	\begin{equation}
	\label{eq:D2uholdernear}
	\| u \|_{C^{\alpha+2, \alpha/2+1}(\bar A_{R, R_1} \times [0, \tau_0))} \leq K_5 < \infty.
	\end{equation}
The Remark  \ref{rem:delta0} explains the loose dependence of our bounds on $\delta_0$ and $\tau_0$. Let us emphasize it again: if the solution $u$ is known to exist past a small time $s>0$, we can choose  $\delta_0 = s/2$ and the constants in all the estimates  following \eqref{eq:D2uaway} do not depend on $\tau_0 > s$. 

\subsection{Continuation of the solution}

Combining \eqref{eq:D2uaway} and \eqref{eq:D2uholdernear}, we have
	\[
	\| D_{\xi\xi} u \|_{C^{\alpha, \alpha/2}(\bar \Omega \times [0, \tau_0))}+\| \pd_{\tau} u \|_{C^{\alpha, \alpha/2}(\bar \Omega \times [0, \tau_0))} \leq K_6 < \infty.
	\]
Moreover, Lemma \ref{lem:boundDu} gives us  $\| D_{\xi} u \| \leq M_1 < \infty $ on $\bar \Omega \times [0, \tau_0)$. These estimates can be transformed into uniform bounds on the first two derivatives of the corresponding solution $v$ to the mean curvature flow \eqref{eq:MCF} for time $t \in [0, t_0)$. Therefore the solution $v$ has a continuation existing for time $t \in [0, t_3)$ with  $t_3>t_0$. This contradicts the maximality of $t_0$, and proves that the solution $u$ to \eqref{eq:zoom-in-problem} exists for all time $\tau \in [0, \infty)$. Since none of the bounds on $u$, $Du$ or $D^2u$ depended on $\tau_0$, we have 
	\begin{gather}
	u_- (\xi) \leq u(\xi, \tau) \leq u_+(\xi), \quad (\xi, \tau) \in \bar \Omega \times [0, \infty) \label{eq:u-bound-all},\\
	|D_{\xi} u (\xi, \tau)| \leq M_1, \quad (\xi, \tau) \in \bar \Omega \times [0, \infty) \label{eq:Du-bound-all},\\
	\| D_{\xi\xi} u \|_{C^{\alpha, \alpha/2}(\bar \Omega \times [0, \infty))}+\| \pd_{\tau} u \|_{C^{\alpha, \alpha/2}(\bar \Omega \times [0, \infty))} \leq K_6 \label{eq:D2u-bound-all}.
	\end{gather}

\section{A Solution to the Elliptic Equation $\el u$=0}
\label{sec:elliptic}


Here, we show that there is a subsequence of $u(\cdot, \tau_n)$ that tends to a solution to the elliptic equation $\el u =0$ as $\tau_n \to \infty$.
  
Let us denote by $M_{\tau}$ the graph of $u(\cdot, \tau)$ over $\Omega$, by $X=(\xi, u(\xi, \tau))$  the position vector of a point on $M_{\tau}$. From the equation $\pd_{\tau}u = \el (u)$, the variation of $X$ with respect to time is 
	\[
	\pd_{\tau} X = (H+X \cdot \nu) \sqrt {1+|Du|^2}\ \mathbf{e}_3 =: Z,
	\]  
where $\mathbf{e}_3$ is the unit vector $(0, 0, 1)$ in $\R^3$. It will be useful for what follows to decompose $\mathbf e _3 $ and $Z$  into their tangent and normal components with respect to $M_{\tau}$:
	\begin{align*}
	\mathbf e_3 &= \mathbf v+ \frac{\nu}{\sqrt{1+|Du|^2}},\\
	Z &= Z^T + Z^{\perp} =  (H+ X \cdot \nu) \sqrt{1+|Du|^2}\ \mathbf{ v}  + (H+X\cdot \nu) \nu
	\end{align*}
where $\mathbf v = \frac{1}{1+|Du|^2}(D_1 u, D_2u, |Du|^2)$.  

Consider the functional 
	$
	J(u) = \int_{M_{\tau}} e^{-|X|^2/2} dH,
	$
where $dH$ is the Hausdorff measure on $M_{\tau}$. We have
	\begin{align*}
	\frac{d J(u)}{d\tau} 
		&=  \int_{M_{\tau}} \Big[(div_{M} Z^{\perp} + div_{M} Z^T)-  X \cdot \pd_{\tau} X\Big] e^{-|X|^2/2} \ dH  \\ 
	&= \int_{M_{\tau}} e^{-|X|^2/2} (-H (H+X\cdot \nu)) \ dH +\int_{M_{\tau} }e^{-|X|^2/2} div_M Z^T \ dH\\
	& \ \ \ \ \ -\int_{M_{\tau}} X \cdot \pd_{\tau} X e^{-|X|^2/2} dH\\
	&= -\int_{M_{\tau}} e^{-|X|^2/2}  (H+X\cdot \nu)^2\ dH +\int_{\pd M_{\tau} }e^{-|X|^2/2}  Z^T \cdot \eta \ ds,
\intertext{where $\eta$ is the outward conormal unit vector to $\pd M_{\tau}$. On the circle $r=R$,  $0=\pd_{\tau} u=  (H+X \cdot \nu) \sqrt {1+|Du|^2}$ therefore }
	\frac{d J(u)}{d\tau}
	&= -\int_{M_{\tau}} e^{-|X|^2/2}  (H+X\cdot \nu)^2\ dH \leq 0.	\end{align*}
Writing $\frac{d J(u)}{d\tau}$ in terms of $\pd_{\tau} u$ and as an integral over $\Omega$, we obtain
	\[
	\frac{d J(u)}{d\tau}=- \int_{\Omega} \frac{(\pd_{\tau} u)^2}{ \sqrt{1+|Du|^2}} e^{-\frac{|\xi|^2 +u^2}{2}}\ d\xi \leq 0.
	\]
By definition, $J(u(\cdot , \tau)) \geq 0$ for all time $\tau \in [0, \infty)$, so
	\[
	  \int_{0}^{\infty}\int_{\Omega} \frac{(\pd_{\tau} u)^2}{ \sqrt{1+|Du|^2}} e^{-\frac{|\xi|^2 +u^2}{2}}\ d\xi < \infty
	\]
and
	\begin{equation}
	\label{eq:dtau-un}
	\lim _{ n \to \infty}  \int_{n-1}^{n+1}\int_{\Omega} \frac{(\pd_{\tau} u)^2}{ \sqrt{1+|Du|^2}} e^{-\frac{|\xi|^2 +u^2}{2}}\ d\xi =0.
	\end{equation}
Define $u_n(\xi, \tau) = u(\xi, n+\tau)$ for $(\xi, \tau) \in \bar \Omega \times [-1, 1]$. For any constant $\rho>R $, the $u_n$'s are  uniformly bounded in $C^{\alpha+2, \alpha/2+1}(\overline{(B_{\rho} \cap \Omega)} \times [-1, 1])$. We can therefore extract a subsequence that converges in $C^{\beta+2, \beta/2+1}(\overline{(B_{\rho} \cap \Omega)} \times [-1, 1])$, $0<\beta<l$. A Cantor diagonal argument gives us a further subsequence $u_{n_i}$ and a function $u_{\infty}$ such that
	$
	u_{n_i} \to u_{\infty}$ as $i \to \infty$ in the norm $C^{\beta+2, \beta/2+1}(\overline{(B_{\rho} \cap \Omega)} \times [-1, 1])$ for every $ \infty >\rho>R$. 
Since every $u_n$ satisfies 
	$
	\pd_{\tau} u_n = \el u_n$ in $ \Omega \times (-1, 1)$ and 
	$ u_n(\xi, \tau) =f$ on $ \pd B_R \times [-1, 1],
	$
 the limit function $u_{\infty}$ inherits the same properties. 
On the one hand, equation \eqref{eq:dtau-un} and the uniform bounds \eqref{eq:u-bound-all} and \eqref{eq:Du-bound-all}    tell us that $\pd_{\tau} u_{n_i} \to 0$ pointwise. On the other hand, we know that $\pd_{\tau} u_{n_i} \to \pd_{\tau} u_{\infty}$ pointwise. Hence, $\pd_{\tau} u_{\infty} =0$ and $u_{\infty}$ satisfies $\el u_{\infty} =0$ in $\Omega$ and $u_{\infty} =f$, $\pd \Omega$. We have therefore proved
 
 \begin{lemma} 
 \label{lem:prelim}
 Let $2> R>\frac{1}{\sqrt 2} $ and $N\geq 5$. Define $\Omega= \{ \xi \in \R^2 \mid |\xi| >R\}$ to be the plane with a hole of radius $R$. There is a $\eps_1 >0$ depending on $R$ and $N$ such that for any $f: [0, 2 \pi] \to \R$ with $\| f \|_{C^4[0, 2 \pi]}=\eps \leq \eps_1$ and satisfying the symmetries $ f(\theta) =- f(- \theta) = f(\pi/N -\theta)$, there exists a smooth solution $u$ to the  Dirichlet problem 
 	\begin{gather}
	\tag{\ref{eq:quasilinear}}
	\el(u)=g^{ij}(Du(\xi))D_{ij}u(\xi) - \xi \cdot D u(\xi) + u(\xi) = 0, \quad \xi \in \Omega,\\
	u =f \textrm{ on } \pd B_R. \tag{\ref{eq:boundary-condition}}
	\end{gather} 
In addition, We can choose the constant $\eps_1=\eps_1(R_0)$ uniformly for all $R$ such that  $2>R\geq R_0>1/\sqrt 2$.
\end{lemma}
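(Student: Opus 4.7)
\medskip

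\noindent\textbf{Proof plan for Lemma \ref{lem:prelim}.} The proof is essentially an assembly of the results established in Sections \ref{sec:linear-operator}--\ref{sec:parabolic-equation} together with the monotonicity argument begun in Section \ref{sec:elliptic}. Given $f$ with the required symmetries and $\eps_1$ to be chosen at most as small as the threshold in Lemma \ref{lem:subsuper}, I first apply Lemma \ref{lem:subsuper} to produce a subsolution $u_-$ and supersolution $u_+$ to $\el(u)=0$ with boundary data $f$. I then pick an initial condition $u_0\in C^3(\bar\Omega)$ satisfying $u_-\le u_0\le u_+$, the symmetries \eqref{eq:symmetries}, the compatibility conditions \eqref{eq:comp-cond}, and $\|D u_0\|_{C^2(\bar\Omega)}<\infty$; such a $u_0$ exists because $u_-$ and $u_+$ agree on $\pd\Omega$ with $f$ and both inherit the symmetries, so one may take for instance a smooth cutoff interpolation between $u_1$ near $\pd\Omega$ and a symmetric smoothing of $\tfrac12(u_-+u_+)$ at infinity, and then adjust near $\pd\Omega$ to meet \eqref{eq:comp-cond}.

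Next, standard short-time parabolic theory yields a smooth solution $u(\xi,\tau)$ of \eqref{eq:zoom-in-problem} on a maximal interval $[0,\tau_0)$. The work of Section \ref{sec:parabolic-equation} then supplies, uniformly for $\tau\in[0,\tau_0)$, the squeezing \eqref{eq:u-bound-all}, the gradient bound \eqref{eq:Du-bound-all}, and the H\"older estimates \eqref{eq:D2u-bound-all} on $D^2_{\xi\xi}u$ and $\pd_\tau u$, with constants independent of $\tau_0$ once $\tau_0$ exceeds some fixed small positive time. These estimates transfer to uniform bounds on the first two derivatives of the associated MCF solution $v$ via \eqref{eq:definition-v}, which rules out finite-time singularity and forces $\tau_0=\infty$. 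The symmetries \eqref{eq:symmetries-u} are preserved throughout: the operator $\el$, the boundary data $f$, and $u_0$ all respect the reflections $\theta\mapsto-\theta$ and $\theta\mapsto\pi/N-\theta$, so by uniqueness of solutions of \eqref{eq:zoom-in-problem} with prescribed Lipschitz data, $u(\cdot,\tau)$ retains these symmetries for every $\tau$.

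For the passage $\tau\to\infty$, I use the monotonicity of $J(u(\cdot,\tau))=\int_{M_\tau}e^{-|X|^2/2}\,dH$ computed in Section \ref{sec:elliptic}: the boundary term vanishes because $\pd_\tau u=0$ on $\pd B_R$, and the bulk identity gives
\[
\int_0^\infty\!\int_{\Omega}\frac{(\pd_\tau u)^2}{\sqrt{1+|Du|^2}}\,e^{-(|\xi|^2+u^2)/2}\,d\xi\,d\tau<\infty,
\]
so the time-slice integrals over $[n-1,n+1]$ tend to zero. Setting $u_n(\xi,\tau)=u(\xi,n+\tau)$ on $\bar\Omega\times[-1,1]$, the uniform estimates \eqref{eq:Du-bound-all}--\eqref{eq:D2u-bound-all} together with a Cantor diagonal argument over an exhaustion $B_{\rho_k}\cap\Omega$ extract a subsequence $u_{n_i}\to u_\infty$ in $C^{\beta+2,\beta/2+1}$ on each compact piece. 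Since $\pd_\tau u_{n_i}\to 0$ pointwise a.e.\ by the integral bound while also $\pd_\tau u_{n_i}\to\pd_\tau u_\infty$ by $C^{\beta+2,\beta/2+1}$ convergence, one concludes $\pd_\tau u_\infty\equiv 0$, whence $\el u_\infty=0$ on $\Omega$ with $u_\infty=f$ on $\pd\Omega$ and the symmetries \eqref{eq:symmetries-u} inherited from the $u_{n_i}$. Smoothness then follows from elliptic regularity applied to \eqref{eq:quasilinear}.

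The main obstacle I anticipate is the very last reconciliation: in principle, the subsequence $u_{n_i}$ only converges on compact sets, so one must check that the limiting $u_\infty$ is a legitimate solution on all of $\Omega$ with the correct growth, and that $\pd_\tau u_{n_i}\to 0$ really holds in a strong enough sense to conclude $\pd_\tau u_\infty=0$ rather than merely $L^2$-averages tending to zero. This is handled by the $C^{\alpha,\alpha/2}$ H\"older bound on $\pd_\tau u$ in \eqref{eq:D2u-bound-all}, which upgrades the $L^2$ decay \eqref{eq:dtau-un} to uniform pointwise decay of $\pd_\tau u_{n_i}$ on every compact set, so the argument closes and the uniformity of $\eps_1=\eps_1(R_0)$ for $R\ge R_0>1/\sqrt 2$ comes from the uniformity of the constants in Lemmas \ref{lem:subsuper}, \ref{lem:U1-bounds} and the estimates in Section \ref{sec:parabolic-equation}.
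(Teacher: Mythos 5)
Your proposal follows essentially the same path as the paper: construct the sub- and supersolutions of Lemma \ref{lem:subsuper}, choose a $C^3$ initial datum $u_0$ squeezed between them and satisfying the symmetries and compatibility conditions, solve the parabolic problem \eqref{eq:zoom-in-problem}, invoke the uniform a priori estimates \eqref{eq:u-bound-all}--\eqref{eq:D2u-bound-all} of Section \ref{sec:parabolic-equation} to rule out finite-time breakdown, and then use the monotonicity of $J$ and the resulting integral decay \eqref{eq:dtau-un} to extract a subsequential limit $u_\infty$ solving $\el u_\infty=0$ with $u_\infty=f$ on $\pd\Omega$. Your remark that the H\"older control on $\pd_\tau u$ in \eqref{eq:D2u-bound-all} is what reconciles the $L^2$-in-time decay with the pointwise conclusion $\pd_\tau u_\infty\equiv 0$, and that symmetry is preserved by uniqueness of the parabolic flow, are precisely the points the paper uses implicitly, so the two arguments coincide.
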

%

\section{Uniqueness}

For the sake of clarity let us assume $R>\sqrt 3/2$ in this section. The same reasoning also works in the more general case $R> 1/\sqrt 2$ although the estimates are more involved.

Let $f$ be our boundary condition, and suppose that $\| f \|_{C^4(\pd \Omega)} \leq \eps_1$, where $\eps_1$ is given by Lemma \ref{lem:prelim}.
\begin{theorem}
\label{thm:uniqueness}
  There is an $\eps_2 \leq \eps_1$ so that, if $\| f \|_{C^4} \leq \eps_2$ and 
if $u$ and $\tilde u$ are  two solutions to
	\begin{subequations}
	\label{eq:elliptic-problem}
	\begin{align}
	\label{eq:ssim}
	\el u= g^{ij}(D u) D_{ij} u - \xi \cdot Du +u =0 \text{ in } \Omega \\
	\label{eq:bcond}
	u =f \text{ on } \pd \Omega
	\end{align}
	\end{subequations}
with $u_- \leq u, \tilde u \leq u_+$ then $u \equiv \tilde u$.
\end{theorem}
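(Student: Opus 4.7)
The plan is to apply a perturbed version of Theorem \ref{thm:max-principle} to the difference $w := u - \tilde u$, using the symmetries of the barriers $u_\pm$ to reduce the problem to the single sector $\OmRN$. Subtracting $\el u = \el \tilde u = 0$ and expanding
\[
g^{ij}(Du) - g^{ij}(D\tilde u) = \Bigl(\int_0^1 \pd_{p_k} g^{ij}\bigl(sDu + (1-s) D\tilde u\bigr)\, ds \Bigr) D_k w,
\]
one finds that $w$ solves the linear equation
\[
Lw := a^{ij}(\xi) D_{ij}w + b^{i}(\xi) D_i w - \xi \cdot Dw + w = 0 \text{ in } \Omega,
\]
with $a^{ij} := g^{ij}(Du)$ uniformly elliptic and $b^{i}$ the mean-value integral paired with $D^2\tilde u$. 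From the squeeze $u_-\leq u,\tilde u\leq u_+$ and Corollary \ref{cor:bound-subsuper} (with $K_2 = O(\eps)$), combined with interior elliptic estimates analogous to those used in Section \ref{sec:parabolic-equation}, one obtains pointwise bounds $|Du|,|D\tilde u|\leq C\eps$ and $|\xi||D^2u|,|\xi||D^2\tilde u|\leq C\eps$, hence $|a^{ij} - \delta^{ij}|\leq C\eps^2$ and $|b^{i}|\leq C\eps/|\xi|$; so $L$ is a small perturbation of $\lin$.

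The reduction to $\OmRN$ proceeds by symmetry. The barriers $u_\pm = u_1 \pm K_1^3\eps^3 k\bigl(r - R^2/r\bigr)$ inherit both symmetries in \eqref{eq:symmetries-f}: the linear solution $u_1$ is constructed symmetrically, and the radial correction is manifestly invariant. Thus $u_\pm(r,\theta) = -u_\pm(r,-\theta) = u_\pm(r,\pi/N-\theta)$. Taking $\theta=0$ yields $u_\pm(r,0)=0$, and then taking $\theta=\pi/N$ in the second relation gives $u_\pm(r,\pi/N) = u_\pm(r,0) = 0$. The squeeze forces $u$ and $\tilde u$ to vanish on the rays $\theta=\pm\pi/N$; since $u = \tilde u = f$ on $\pd B_R$, we obtain $w\equiv 0$ on $\pd\OmRN$. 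The linear-growth bound $|w|\leq u_+ - u_-\leq C\eps^3 |\xi|$ is trivially integrable against the Gaussian weight, so $w\in V_0(\OmRN)$.

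Now I mimic the proof of Theorem \ref{thm:max-principle} with $\psi(r,\theta)=r\cos\theta$. On $\OmRN$ one has $\psi\geq R\cos(\pi/N)>0$, $|D\psi|\leq 1$, and $\lin\psi = 0$. Setting $w = \psi\varphi$ (so $\varphi\in V_0(\OmRN)$), the weak form of $Lw = 0$ tested against $v\in V_0(\OmRN)$ and integrated by parts, using $\lin\psi = 0$, reads
\[
\int_{\OmRN}\psi\, a^{ij}D_i\varphi\, D_j v\, dm = \int_{\OmRN}v\, a^{ij}D_j\varphi\, D_i\psi\, dm + \mathcal{R}(v),
\]
where the remainder $\mathcal{R}(v)$ gathers contributions from $a^{ij}-\delta^{ij}$, $b^{i}$, $D_j a^{ij}$ (of size $O(\eps/|\xi|)$, via $D^2u$), and $L\psi = b^1$. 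Each of these is pointwise $O(\eps)$ times $|\varphi||Dv|$, $|D\varphi||v|$, or $|\varphi||v|$, so by Cauchy--Schwarz in $H$ one has $|\mathcal{R}(v)| \leq C\eps(\|D\varphi^+\|_H + \|\varphi^+\|_H)(\|Dv\|_H + \|v\|_H)$. Testing with $v = \varphi^+$, invoking the ellipticity $a^{ij}\zeta_i\zeta_j\geq (1-C\eps^2)|\zeta|^2$ and the Poincar\'e inequality \eqref{eq:poincare} ($\|\varphi^+\|_H\leq \|D\varphi^+\|_H/\sqrt N$), the dominant inequality becomes
\[
\bigl(R\cos\tfrac{\pi}{N} - \tfrac{1}{\sqrt N} - C\eps\bigr)\|D\varphi^+\|_H^2 \leq 0.
\]
For $R>\sqrt 3/2$ and $N\geq 5$ the bracket (with $C\eps$ removed) is strictly positive with margin, so choosing $\eps_2\leq \eps_1$ small enough makes it positive; hence $\|D\varphi^+\|_H = 0$, so $\varphi^+\equiv 0$ and $w\leq 0$. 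The symmetric argument on $-w$ gives $w\geq 0$, and $w\equiv 0$.

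The main obstacle is the careful tracking of the remainder $\mathcal{R}(v)$. The most delicate pieces are the integration-by-parts contribution $\int \psi D_j a^{ij}\, v\, D_i\varphi\, dm$, where $D_j a^{ij} = \pd_{p_k}g^{ij}(Du)\, D_{jk}u = O(\eps/|\xi|)$, and the cross-term $\int(a^{ik}-\delta^{ik})\xi_k\, \varphi\, D_iv\, dm$, which is pointwise $O(\eps^2 |\xi|)$ and relies on the Gaussian weight both for integrability and for absorption into $\|D\varphi^+\|_H^2$. None of these presents a deep obstruction; the essential mechanism throughout is the smallness of $\eps$.
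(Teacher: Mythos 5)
Your reduction to the sector $\OmRN$ contains a fatal error in the symmetry claim about the barriers. You assert that $u_\pm = u_1 \pm K_1^3\eps^3 k(r - R^2/r)$ ``inherit both symmetries in \eqref{eq:symmetries-f}'' because ``the radial correction is manifestly invariant.'' But the required symmetry $u_\pm(r,\theta) = -u_\pm(r,-\theta)$ is an \emph{odd} condition in $\theta$, not an invariance. The linear solution $u_1$ is indeed odd, but the radial correction $K_1^3\eps^3 k(r-R^2/r)$ is constant in $\theta$, hence \emph{even}, and therefore destroys the odd symmetry of $u_+$. Concretely, $u_+(r,0) = u_1(r,0) + K_1^3\eps^3 k(r-R^2/r) = K_1^3\eps^3 k(r-R^2/r) > 0$ for $r>R$, and likewise $u_+(r,\pm\pi/N) > 0$ and $u_-(r,\pm\pi/N) < 0$. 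What actually holds is $-u_+(r,-\theta) = u_-(r,\theta)$, i.e.\ the pair $(u_-,u_+)$ is exchanged under $\theta\mapsto-\theta$. So the squeeze $u_-\leq u\leq u_+$ only yields $|u(r,\pm\pi/N)| \leq K_1^3\eps^3 k(r-R^2/r)$, not $u=0$; the same for $\tilde u$. Consequently $w = u-\tilde u$ need not vanish on the rays $\theta=\pm\pi/N$, the claim $w\in V_0(\OmRN)$ is unjustified, and the weak-form maximum principle argument cannot proceed. Note also that you cannot repair this by simply assuming $u$, $\tilde u$ themselves odd: the paper's Remarks section \emph{derives} the symmetry \eqref{eq:symmetries-u} from uniqueness, so building symmetry of $u,\tilde u$ into the uniqueness proof would be circular.

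The paper's own proof bypasses the sector reduction entirely. It works on all of $\Omega$ with a classical (pointwise) maximum principle rather than the $V_0$ weak-form argument: it builds a \emph{radial} positive supersolution $\psi(r) = r^\alpha - br^{\alpha-2}$ (with $1 < \alpha < 9/8$, $b = 3/4$) to the perturbed operator $\bar\lin$, verifies $\bar\lin\psi\leq 0$ using the smallness of $\eps$ and $R>\sqrt3/2$, and then applies the maximum principle to $\tilde w = w/\psi$; the superlinear growth of $\psi$ forces $\tilde w\to 0$ at infinity, which closes the argument. Because $\psi$ is radial it is positive on the full annular domain, so no angular reduction (and hence no oddness of the barriers) is needed. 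If you want to rescue your sector-based argument you would need a supersolution that is positive on $\OmRN$ but not required to vanish on the rays, together with a mechanism to control $w$ on the rays; the paper's radial $\psi$ is a clean way to avoid both issues.
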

Before we start the proof, we need some a priori estimates on the solutions $u$ and $\tilde u$.

\subsection{A priori estimates} 
\label{ssec:apriori} 
Because the solutions $u$ and $\tilde u$ above are not dependent on time, they satisfy the parabolic equation $\pd_{\tau} u = \el u$ as well.  Estimates similar to the ones from Lemmas \ref{lem:boundDu} and \ref{lem:D2uaway}  therefore hold in this case also, with some small modifications. 
If $u$ is a solution to the elliptic equation \eqref{eq:ssim},  then the function 
	\[
	v(x,t) = \sqrt{2(1-t)} \; u\bigl(\frac{x}{\sqrt{2(1-t)}}\bigr)
	\]
is a solution to the mean curvature flow on $\mathcal Q_{1}=\{ (x, t) \mid 0<t<1, |x| > \sqrt{2(1-t) R} \}$. The estimate 
\begin{multline} 
		\tag{\ref{eq:ecker-huisken}}
	\sqrt{1+|Dv(x_0, t)|^2} \leq C_1(n)\sup_{B_{\rho}(x_0)} \sqrt{1+|Dv(x, 0)|^2}\\
  	\exp[C_2(n) \rho^{-2} \sup_{[0,T]}(\sup_{B_{\rho}(x_0)\times[0, T]} v -v(x_0, t))^2] 		
	\end{multline}
where $0 \leq t \leq T, B_{\rho}(x_0)$ is a ball in $\Omega$ and $n$ is the dimension of the graph of  $v$, is valid. The last factor of the right hand side can be bounded as in the proof of Lemma \ref{lem:boundDu}; however, we do not have an initial condition independent of $v$ in this case, so right now, our estimates depend on the value of $Dv( \cdot, 0)$ over a bounded set. Taking $|x_0| = \frac{3}{2} \sqrt 2 R$ and $\rho=|x_0|/3$, we obtain
	\[
	|Dv(x_0, t)| \leq C \sup_{2 \sqrt 2 R> |x| > 
	\sqrt 2 R} \sqrt{1+|Dv(x, 0)|^2}, \quad 1>t>0,
	\]
hence, by the change of variable $x =\xi \sqrt{2(1-t)}$, 
 	\[
	|Du(\xi)| \leq C \sup_{2R > |\zeta| > R}  \sqrt{1+|Du(\zeta)|^2}, \quad  |\xi| \geq R
	\]
An argument similar to the one in the  proof of Lemma \ref{lem:D2uaway} gives us bounds on higher derivatives of $u$ away from the boundary
	\begin{align*}
	|D_{\xi\xi}^2 u(\xi)| & \leq  C , \quad |\xi| \geq e (R+1),  \\
|D_{\xi\xi}^3 u(\xi)| &\leq C, \quad |\xi| \geq e (R+1), 
	\end{align*}
with the constant $C$ depending on $\sup_{2R \geq |\zeta| \geq R}  \sqrt{1+|Du(\zeta)|^2}$. Standard elliptic theory on quasilinear equations  (see Theorem 13.4 or 13.7 \cite{gilbarg-trudinger;elliptic-equations} for example)  assures the existence of  constants $C$ and $\alpha>0$ depending only on $f$, $R$ and  $\sup_{2R > |\zeta| > R}  \sqrt{1+|Du(\zeta)|^2}$ such that 
	\[
	\| Du\|_{C^{\alpha}(A_{R, 2R})} \leq C,
	\]
where $A_{R, 2R}$ is the annulus $\{ R<|\xi|<2R\}$. The $C^{\alpha}$ H\"older norm of the coefficient $g^{ij}(Du)$, seen as a function of $\xi$, is uniformly bounded over the whole domain $\Omega$. We have a  bound on $\| D^2_{\xi\xi} u \|_{C^{\alpha}(A_{R, 2R})}$ by elliptic theory; therefore $\| D^2_{\xi\xi} u \|_{C^{\alpha}(\Omega)}$ is also bounded.



\subsubsection{The gradient $Du$ achieves its maximum on the boundary $\pd \Omega$.}
\label{sssec:Du}
Equation \eqref{eq:parabolicDu} from section \ref{sec:parabolic-equation} implies that  $w=|Du|^2$ satisfies
	\begin{equation}
    	\label{eq:lin-for-w0}
    	0 \leq g^{ij}(Du) D_{ij} w + Z_kD_kw - \xi \cdot Dw = :\bar{\lin} w
	\end{equation}
where $ Z_k(x)  = g^{ij}_k(Du)D_{ij}u$ and $g^{ij}_k(p) = \frac{\pd g^{ij}}{\pd p_k}(p)$.
We will derive the following theorem as an application of the maximum principle:

\begin{theorem}
\label{thm:max-gradient-on-R}
Assume $u\in C^2(\Omega) \cap C^0(\bar \Omega)$ is a solution of \eqref{eq:elliptic-problem}, then $|D u|$ achieves its maximum on the circle $r=R$.
\end{theorem}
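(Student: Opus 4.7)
The plan is to treat $w=|Du|^2$ as a bounded subsolution of the linear operator $\bar{\lin}$ of \eqref{eq:lin-for-w0} on the unbounded domain $\Omega$, and to combine the weak and strong maximum principles on bounded annuli with a linear barrier argument at infinity that exploits the restoring drift $-\xi\cdot Dw$.

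The a priori estimates of Section~\ref{ssec:apriori} guarantee that $|Du|$ and $|D^2_{\xi\xi}u|$ are bounded on all of $\bar\Omega$, so that $w$ is bounded and, since $|g^{ij}_{p_k}|\leq 6$, the coefficient $Z_k=g^{ij}_{p_k}(Du)D_{ij}u$ satisfies $|Z|\leq C_0$ globally for some constant $C_0$. Because $\bar{\lin}$ has no zeroth-order term and is uniformly elliptic on bounded subdomains, the standard weak and strong maximum principles apply on any bounded piece of $\Omega$; the only real issue is the behavior of $w$ at infinity.

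Take $\phi(\xi)=|\xi|$ as a barrier. Using $D_{ij}r=\delta_{ij}/r-\xi_i\xi_j/r^3$ and $g^{ii}\leq 2$, a direct computation gives
\[
\bar{\lin}\,r\;\leq\;\frac{2}{r}+C_0-r,
\]
so one may choose $R_*>R$ depending only on $R$ and $C_0$ such that $\bar{\lin}(-r)\geq 1$ on $\{|\xi|\geq R_*\}$ (any $R_*\geq C_0+3$ works, and this is automatically larger than $R<2$). For each $\eps>0$, the function $w_\eps:=w-\eps|\xi|$ then satisfies $\bar{\lin}\,w_\eps\geq\eps$ on the bounded annulus $\{R_*<|\xi|<R'\}$, so the weak maximum principle forces $w_\eps$ to attain its supremum on the boundary of this annulus. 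Since $w$ is bounded, on the outer circle $w_\eps\leq\sup_\Omega w-\eps R'\to-\infty$ as $R'\to\infty$; hence for $R'$ large the maximum sits on $\{|\xi|=R_*\}$. Letting $R'\to\infty$ and then $\eps\downarrow 0$ yields
\[
\sup_{|\xi|\geq R_*}w\;=\;\max_{|\xi|=R_*}w.
\]

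To finish, apply the weak maximum principle to the subsolution $w$ on the bounded annulus $A_{R,R_*}=\{R<|\xi|<R_*\}$ to obtain $\max_{\bar A_{R,R_*}}w=\max(\max_{r=R}w,\max_{r=R_*}w)$, which combined with the display above gives $\sup_{\bar\Omega}w=\max(\max_{r=R}w,\max_{r=R_*}w)$. If the first term dominates the lemma follows. Otherwise the global supremum is attained at an interior point of $\Omega$ on the circle $\{r=R_*\}$, and the strong maximum principle forces $w$ to be constant on $\Omega$; this equalizes the two candidate maxima and contradicts the assumed strict inequality. The main obstacle is the unboundedness of $\Omega$, and the key insight that removes it is that the drift $-\xi$ in $\bar{\lin}$ points inward at infinity, so the linear barrier $\eps|\xi|$ dominates any bounded subsolution for large $|\xi|$.
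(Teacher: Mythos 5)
Your proof is correct and takes a genuinely different route from the paper's, so it is worth comparing the two.

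The paper constructs a global comparison function: Lemma~\ref{lem:psiexists} produces a radial $\psi(r)>0$ with $\psi(r)\to\infty$ and (claimed) $\bar{\lin}\psi\leq 0$ on all of $\Omega$, obtained by solving a first-order ODE for $\psi'$ whose solution grows like $e^{r^2/2}$. With such a $\psi$ in hand the argument is one-step: $w-\eta\psi$ is a subsolution tending to $-\infty$ at infinity, hence its max is on $\partial\Omega$, and one lets $\eta\downarrow 0$. You instead use the much simpler linear barrier $\phi(\xi)=|\xi|$, which is only a supersolution outside a fixed radius $R_*$ (where the radial drift $-r$ dominates $|Z|$), and then patch: a penalized weak maximum principle on $\{R_*<|\xi|<R'\}$ together with the boundedness of $w$ pushes the sup of $w$ on the exterior region onto the circle $r=R_*$; the weak maximum principle on the inner annulus $A_{R,R_*}$ then localizes the global sup on $\{r=R\}\cup\{r=R_*\}$; and the strong maximum principle rules out the interior circle $r=R_*$. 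The trade-off is a two-step geometric argument plus an appeal to the strong maximum principle in exchange for not having to verify a global differential inequality for a cleverly chosen $\psi$.

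This is more than a cosmetic saving. The inequality
\[
\bar{\lin}(\psi)\leq\Bigl(g^{ij}(Du)\tfrac{\xi_i\xi_j}{r^2}\Bigr)\Bigl(\psi''+\psi'\bigl(\tfrac{1}{r}-r+C\bigr)(1+b^2)\Bigr)
\]
used in the proof of Lemma~\ref{lem:psiexists} relies on $(\tfrac{1}{r}-r+C)$ being nonnegative (one needs $\beta\leq\alpha\beta(1+b^2)$ with $\alpha=g^{ij}\xi_i\xi_j/r^2\leq 1$, which flips sign when $\beta=\tfrac{1}{r}-r+C<0$), so as written it does not obviously give $\bar{\lin}\psi\leq 0$ for $r$ large. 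Your two-region structure sidesteps exactly this delicacy by only needing a supersolution outside $R_*$, where $-r$ has already absorbed the lower-order terms. One small stylistic remark: the final case distinction is slightly awkwardly phrased as a contradiction — if the supremum is attained at $r=R_*$, the strong maximum principle gives $w\equiv\text{const}$ and the maximum is then also attained on $r=R$, so the conclusion holds in both cases directly, no contradiction needed. Also, you implicitly use that $u$ is smooth enough in the interior (so that $w\in C^2(\Omega)$ and the maximum principles apply); this follows from the a priori Schauder estimates quoted in Section~\ref{ssec:apriori}, but it is worth saying explicitly since the hypothesis only states $u\in C^2(\Omega)\cap C^0(\bar\Omega)$.
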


From the discussion above, $|Z|$ is uniformly bounded and $\bar{\lin}$ is a uniformly elliptic operator on $\Omega$. Moreover, the coefficients of $\bar{\lin}$ are H\"older continuous with  bounded H\"older norm on any compact set of $\Omega$. 
\begin{lemma}
\label{lem:psiexists}
If $
|\nabla h| < b \textrm{  for some constant }b,
$ 
there is a function $\psi(r) >0$ for which $\bar{\lin}(\psi) \leq 0$ and $\psi(r) \to \infty$ as $r \to \infty$. 
\end{lemma}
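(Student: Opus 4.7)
The plan is to produce an explicit radial barrier. I will try $\psi(r)=r^{\alpha}$ for a small exponent $\alpha\in(0,1)$ to be chosen; then $\psi>0$ and $\psi(r)\to\infty$ as $r\to\infty$ are automatic, and the only task is to verify $\bar{\lin}\psi\leq 0$.

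First comes the direct computation. Using $D_i\psi=\alpha r^{\alpha-2}x_i$ and $D_{ij}\psi=\alpha r^{\alpha-2}\delta_{ij}+\alpha(\alpha-2)r^{\alpha-4}x_ix_j$, and decomposing $g^{ij}$ along the radial and tangential directions relative to $e_r=x/r$---setting $A:=g^{ij}x_ix_j/r^{2}$, $B:=\operatorname{tr}(g)-A$, and $Z_r:=Z\cdot x/r$---one obtains
\[
\bar{\lin}(r^{\alpha})=\alpha r^{\alpha-2}\bigl[(B-A)+\alpha A+rZ_r-r^{2}\bigr].
\]
From $g^{ij}(p)=\delta^{ij}-p_ip_j/(1+|p|^{2})$ one checks $|B-A|\leq |Du|^{2}/(1+|Du|^{2})\leq b^{2}$ and $0<A\leq 1$, so the bracket is bounded above by $b^{2}+\alpha+rM-r^{2}$, where $M:=\sup_{\Omega}|Z|$.

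Next, I would invoke the a priori estimates of Section~\ref{ssec:apriori}. The gradient bound $b$ is given by hypothesis, and together with the H\"older bound on $D^{2}u$ from that section it forces $M$ to be finite; moreover both $b$ and $M$ can be made arbitrarily small by shrinking $\|f\|_{C^{4}}$. The function $r\mapsto b^{2}+\alpha+rM-r^{2}$ is concave with vertex at $r=M/2$, so once $\varepsilon_{2}$ is small enough to guarantee $M\leq 2R$, its supremum over $[R,\infty)$ is attained at the inner boundary $r=R$ and equals $b^{2}+\alpha+RM-R^{2}$. Since $R>\sqrt{3}/2$ gives $R^{2}>3/4$, and $b,M\to 0$ as $\|f\|_{C^{4}}\to 0$, I can choose $\varepsilon_{2}$ and then $\alpha$ small enough so that $b^{2}+\alpha+RM<R^{2}$. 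For such choices the bracket is strictly negative, and hence $\bar{\lin}\psi\leq 0$ throughout $\Omega$.

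The main obstacle is the inner boundary $r=R$: this is the point where the outward drift $-r\psi'$ is weakest, and where the worst-case tangential second-derivative contribution $B$ (close to $1$) most nearly offsets it. The explicit restriction $R>\sqrt{3}/2$ is invoked precisely here to give $R^{2}>3/4$, providing just enough room to absorb the lower-order terms once $\|f\|_{C^{4}}$ is small. If the $r^{\alpha}$ computation proved too delicate to execute cleanly, the close variant $\psi(r)=1+\log r$ (which is positive on $\Omega$ because $\log R>\log(\sqrt{3}/2)>-1$) would yield an essentially identical analysis, with the same role played by $R^{2}>3/4$.
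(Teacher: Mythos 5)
Your computation of $\bar{\lin}(r^{\alpha})$ and the bounds $|B-A|\le b^{2}$, $0<A\le1$ are correct, but the ansatz $\psi=r^{\alpha}$ cannot prove the lemma as stated, and the way you patch it is circular. The lemma hypothesizes only that $|\nabla h|<b$ for \emph{some} constant $b$---no smallness---and the barrier must also tolerate whatever $M=\sup_{\Omega}|Z|$ happens to be. Your argument needs $b^{2}+\alpha+RM<R^{2}$ with $R<2$, so it fails once $b$ or $M$ is even moderately large. You appeal to Section~\ref{ssec:apriori} to make $b$ and $M$ small by shrinking $\|f\|_{C^{4}}$, but at the point where Lemma~\ref{lem:psiexists} is invoked the only gradient bound in hand is $|Du|\le C\sup_{A_{R,2R}}\sqrt{1+|Du|^{2}}$, a quantity depending on $u$ that is not yet known to be small; the uniform bound $|Du|\le C\|f\|_{C^{4}}$ (hence smallness of $\sup|Z|$) is derived only \emph{after} Theorem~\ref{thm:max-gradient-on-R}, whose proof uses precisely this lemma. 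You also misattribute $R>\sqrt3/2$: the paper's proof of Lemma~\ref{lem:psiexists} needs no lower bound on $R$ at all; that restriction enters later, in the Claim inside the proof of Theorem~\ref{thm:uniqueness}, where $\bar{\lin}$ carries an extra zero-order term $+w$ working against the supersolution.

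The paper avoids all of this by taking $\psi$ radial and increasing and solving an auxiliary ODE. After estimating
$\bar{\lin}\psi\le\bigl(g^{ij}\xi_i\xi_j/r^{2}\bigr)\bigl[\psi''+(1+b^{2})(\tfrac1r-r+C)\psi'\bigr]$
with $C=\sup|Z|$, one integrates $\psi''+(1+b^{2})(\tfrac1r-r+C)\psi'=0$ explicitly, obtaining $\psi'(r)=\psi'(R)(R/r)^{1+b^{2}}e^{(1+b^{2})(-C(r-R)+\frac12(r^{2}-R^{2}))}$, which is positive and makes $\psi\to\infty$ since $\psi'\sim e^{(1+b^{2})r^{2}/2}$ for large $r$. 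This works for arbitrary finite $b$ and $C$ and any $R>0$, with no smallness and no circularity. Your polynomial ansatz has no free parameter able to absorb a large $b$ or $M$; to build a direct barrier without the circular appeal you would need superpolynomial growth, essentially the paper's exponential.
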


\begin{proof}
We consider positive increasing functions $\psi(r)$. The operator $\bar{\lin}(\psi)$  is estimated by
	\begin{align*}
	\bar{\lin} (\psi) &= g^{ij}(Du) \Big(\big(\frac{\delta_{ij}}{r} - \frac{\xi_i \xi_j}{r^3} \big) \psi' +\frac{\xi_i \xi_j}{r^2} \psi'' \Big) + \frac{1}{r} \psi' Z \cdot \xi - r \psi' \\
	& \leq g^{ij}(Du) \frac{\xi_i \xi_j}{r^2} \psi '' +\Big( \frac{1}{r} -r + |Z| \Big) \psi'.
	\end{align*}
We have uniform bounds on $|Du|$ and $|Z|$, therefore
	\begin{align*}
	\bar{\lin} (\psi) \leq \Big(g^{ij}(Du) \frac{\xi_i \xi_j}{r^2}\Big) \Big( \psi '' + \psi' ( \frac{1}{r} -r + C ) (1+b^2)\Big).
	\end{align*}
The solution to the differential equation 
	$
	\psi''+\psi'\left(\frac{1}{r}+C-r\right)(1+b^2)=0
	$ is given by 
	\begin{gather*}
	\ln(\psi'(r))-\ln(\psi'(R))=(1+b^2)\left(-\ln(\frac{r}{R})-C(r-R)+\frac{1}{2}(r^2-R^2)\right)\\
	\psi(r)-\psi(R)=\psi'(R) \int_R^r \left(\frac{R}{s}\right)^{1+b^2}e^{(-C(s-R)+\frac{1}{2}(s^2-R^2))(1+b^2)}ds.
	\end{gather*}
For the choice of boundary conditions $\psi(R) = \psi'(R)= \eps>0$, the function $\psi$ and its derivative $\psi'$ are positive for $r>R$. The last thing to verify is that $\psi$ tends to infinity as $r$ approaches infinity. This is true since for large $r$, the dominant term in the expression for $\psi'$ is of order $e^{r^2}$. 
\end{proof}

\begin{proof} [Proof of Theorem \ref{thm:max-gradient-on-R}]
Let us consider the function $w-\eta\psi$, with $\psi$ as in Lemma \ref{lem:psiexists} and $w=|Dh|^2$. This function is negative for large values of $r$. From equation \eqref{eq:lin-for-w0}, we have
	\[
	\bar{\lin}(w-\eta\psi) \geq 0,
	\]
for any positive constant $\eta$. By the maximum principle, $w-\eta \psi$ does not achieve an interior maximum. Since the function $w-\eta \psi$ goes to $-\infty$ as $r$ goes to infinity, it has to achieve its maximum on the boundary $\pd \Omega$, 
	\[
	w(\xi) -\eta \psi(|\xi|)  \leq \max_{\pd \Omega} w - \eta \eps, \quad \xi \in \Omega, \eta>0
	\]
Letting $\eta$ tend to $0$, we obtain
	$
	w(\xi) \leq \max_{\pd \Omega} w$ for $ \xi \in \Omega$.
\end{proof}
Our subsolution and supersolution provide barriers for the function $u$, therefore
	\[
	|Du| \leq C \| f \|_{C^4} 
	\]
with the constant $C$ independent of $f$ and $u$. With this bound, we can run through the arguments in the beginning of this section again and obtain estimates for the second derivative $D^2 u$ that are independent of the solution $u$.

\subsection{Proof of uniqueness}
%
\begin{proof}[ Proof of Theorem \ref{thm:uniqueness}] 
Let $u$ and $\tilde u$ be two solutions as described in the hypotheses. We substract equation \eqref{eq:ssim} for $\tilde u$ from the equation for $u$ and use  the notation $w =u-\tilde u$ to obtain
	\[ 
	g^{ij} (Du) D_{ij} w + [g^{ij}(Du) - g^{ij} (D\tilde u)] D_{ij} \tilde u - \xi \cdot Dw+w =0.
	\]
The Mean Value Theorem implies that
	\[ 
	g^{ij} (Du) D_{ij} w + [g^{ij}_{k}(D\tilde u+ \theta (Du-D\tilde u))] D_{ij} \tilde u D_k w - \xi \cdot Dw+w =0,
	\]
for some $\theta \in (0, 1)$ and where $g^{ij}_k (p) = \frac{\pd g^{ij}}{\pd p_k}$ for $p \in \R^2$. Denoting by $\bar a$ the vector with components $a_k =  [g^{ij}_{k}(D\tilde u+ \theta(Du- D\tilde u))] D_{ij} \tilde u$,  $k=1, 2$, we have
	\begin{equation}
	\label{eq:lin-for-w}
	\bar{\lin}(w)=g^{ij} (Du) D_{ij} w +\bar a \cdot Dw - \xi \cdot Dw+w =0.
	\end{equation}
A computation shows that 
	$
	g^{ij}_k(p) \leq 6 |p| \text{ if } |p| <1 $. From Section \ref{ssec:apriori}, we know that there exists a constant $C$ so that $
	|D_{ij} \tilde u| \leq C \text{ in } \Omega.$
Moreover, the maximum of the first derivative of any solution to \eqref{eq:ssim} is achieved on the boundary $\pd \Omega$ 
hence $ \max_{\Omega} |Du|$ and $\max_{\Omega} |D\tilde u|$ are controlled by $\eps=\| f \|_{C^4}$ and  go to zero as $\eps$ tends to $0$. The same fact is true for $\eta = \sup |\bar a|$ by the definition of $\bar a$.

The maximum principle does not apply immediately to \eqref{eq:lin-for-w} because of the positive coefficient in front of $w$. To circumvent this, we use a positive supersolution $\psi$ to \eqref{eq:lin-for-w} which grows faster than linearly at infinity. The existence of such a supersolution $\psi$ is given below and is the main difficulty in this proof.

Assuming such a positive supersolution $\psi$ exists,  we define $\tilde w = w/\psi$ and look at
	\[
	\tilde \lin (\tilde w) :=\bar{ \lin} (\psi \tilde w) = \bar{\lin} w = 0.
	\]
The coefficient in front of $\tilde w$ in the explicit expression of $\tilde \lin (\tilde w)$ is now $\lin(\psi)$, which is non positive, so the maximum principle applies to $\tilde \lin$. It implies that for any subdomain $\Omega'$ of $\Omega$, 
	\[ 
	\sup_{ \Omega'} \tilde w \leq \sup_{\pd \Omega'} \tilde w.
	\]
From the hypotheses, $w=0$ on $\pd \Omega$ and from the growth of $\psi$ at infinity, $|\tilde w(\xi)|$ tends to $0$ as $|\xi|$ tends to infinity. Choosing the domains $\Omega'$ to be increasingly large annuli, we obtain that $\sup_{\Omega} \tilde w =0$. Since $\tilde w = w/\psi$ and $\psi>0$, $\sup_{\Omega} w =0$, therefore $u \leq \tilde u$ in the entire domain $\Omega$. Switching the roles of $u$ and $\tilde u$ we show that $\tilde u \leq u$ and conclude that  $u \equiv \tilde u$ in $\Omega$.

We are left to prove the existence of the function $\psi$.
\begin{claim} For $R>\sqrt 3/2$ and $\eps_2$ small enough, there exists a  function $\psi $ on $\Omega$ satisfying 
	\[
	\psi >0, \quad \bar{\lin} \psi \leq 0, \quad \text{ and } \lim_{|\xi| \to \infty} \frac{ |\xi|}{\psi (\xi)} =0.
	\]
\end{claim}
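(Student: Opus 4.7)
I would construct $\psi$ as a radial function of the form
\[
\psi(r) = B r^2 + r - \frac{A}{r},
\]
with constants $A, B > 0$ to be chosen. Using the easy identities $\lin(r^2) = 4 - r^2$, $\lin(r) = 1/r$, and $\lin(1/r) = 1/r^3 + 2/r$, direct computation gives
\[
\lin \psi = 4B - B r^2 + \frac{1-2A}{r} - \frac{A}{r^3}.
\]
The positive term $4B$ is the only obstruction; all of $-Br^2$, $(1-2A)/r$ (as soon as $A > 1/2$) and $-A/r^3$ are strictly negative. The $-Br^2$ term supplies the supersolution slack at infinity, while simultaneously the corresponding leading behavior $\psi \sim B r^2$ delivers the required super-linear growth $|\xi|/\psi(\xi) \to 0$.

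To ensure $\psi > 0$ on $\Omega$, I require $A < R^2$, so that $Br^3 + r^2 - A > 0$ for $r \geq R$; combined with the requirement $A > 1/2$ coming from the sign of $(1-2A)/r$, this already forces $R^2 > 1/2$. The hypothesis $R > \sqrt{3}/2$ just provides a comfortably wide interval $(1/2, R^2)$ for $A$ and keeps the quantitative estimates clean. For the supersolution inequality $\lin \psi \leq 0$, the requirement becomes
\[
f(r) := B r^2 + \frac{2A-1}{r} + \frac{A}{r^3} \geq 4B \quad \text{for all } r \geq R.
\]
The function $f$ blows up at both $r\to 0^+$ and $r\to\infty$, so it has a unique interior minimum at some $r^*$. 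A short scaling argument shows that for $B$ small, $r^* \approx ((2A-1)/(2B))^{1/3}$ and $f(r^*)$ is of order $(2A-1)^{2/3}B^{1/3}$, which dominates $4B$ as $B\to 0^+$. One must also check that $r^* > R$, which holds whenever $B < (2A-1)/(2R^3)$, and this too is achieved by shrinking $B$.

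The passage from $\lin$ to $\bar\lin$ is routine once $\lin\psi$ is negative with definite slack. The perturbation satisfies
\[
\bar\lin\psi - \lin\psi = (g^{ij}(Du) - \delta^{ij}) D_{ij}\psi + \bar a \cdot D\psi = O(\eps^2)\, |D^2\psi| + O(\eps)\,|D\psi|,
\]
using the a priori bounds of Section~\ref{ssec:apriori} (namely $|Du|,|D\tilde u| \leq C\eps$ and $|D^2\tilde u| \leq C$). For $\psi$ radial, $|\psi'| = O(Br)$ and $|\psi''| = O(B)$ at infinity, while $\lin \psi \leq -c\, Br^2$ there; so the perturbation is dominated for $r$ large. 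Near $r=R$, $\lin \psi$ is bounded above by a strictly negative constant depending on $A$, $B$, $R$, so choosing $\eps_2$ small (depending on these) yields $\bar\lin \psi \leq 0$ on all of $\Omega$.

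The main obstacle is the global supersolution check in step two: since $\lin \psi$ is not monotone in $r$, it is not enough to verify $\lin \psi(R) \leq 0$; one must locate the minimum $r^*$ of $f$, confirm $r^* \geq R$, and compare $f(r^*)$ with $4B$. This is exactly where the assumption $R > \sqrt{3}/2$ is invoked, as it widens the admissible range of $A$ enough that the resulting smallness condition on $B$ becomes explicit and straightforward to verify.
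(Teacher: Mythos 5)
Your proposal is correct, but it takes a genuinely different route from the paper's. You choose the integer-power ansatz $\psi = Br^2 + r - A/r$ with $A \in (1/2, R^2)$ and $B>0$ small, whereas the paper uses $\psi = r^\alpha - br^{\alpha-2}$ with a fractional exponent $\alpha \in (1, 9/8)$ (so $\psi$ grows just barely superlinearly) and $b = 3/4$. Your clean identities $\lin(r^2)=4-r^2$, $\lin(r)=1/r$, $\lin(1/r)=1/r^3+2/r$ make the computation elementary, and the $-Br^2$ term provides supersolution slack that actually grows quadratically at infinity. Your two-step strategy (first show $\lin\psi \le 0$ with quantifiable slack, then absorb the $O(\eps^2)|D^2\psi| + O(\eps)|D\psi|$ perturbation taking $\lin$ to $\bar\lin$) also differs from the paper, which folds the perturbation term $\eta = \sup|\bar a|$ into the computation from the outset and then constrains $1 + 4\eta/\sqrt{3} < \alpha < 9/8$. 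One small remark on your positivity and sign constraints: since they amount to $1/2 < A < R^2$ (with $B$ small), your construction in fact works for all $R > 1/\sqrt{2}$, not just $R > \sqrt{3}/2$ --- which is consistent with the paper's own remark at the start of the section that $R>\sqrt{3}/2$ is assumed purely for clarity. As a minor simplification to your argument: the check $r^* > R$ is not needed, since the global minimum of $f(r) = Br^2 + (2A-1)/r + A/r^3$ over $r>0$ already satisfies $\min_{r>0} f(r) \ge \min_{r>0}\bigl(Br^2 + (2A-1)/r\bigr) = 3\cdot 2^{-2/3}(2A-1)^{2/3}B^{1/3}$, which exceeds $4B$ once $B$ is small; a fortiori $f(r) \ge 4B$ on $[R,\infty)$.
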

	 The supersolution is found among  functions  depending on the radius $r$ only, $\psi = \psi(r)$, that are increasing $\psi'(r)>0$. The operator $\lin$ applied to a radial function is estimated by 
	\begin{align*}
	\bar{\lin} (\psi) &= g^{ij}(Du) \Big[\big(\frac{\delta_{ij}}{r} - \frac{\xi_i \xi_j}{r^3} \big) \psi' +\frac{\xi_i \xi_j}{r^2} \psi'' \Big] + r^{-1} \psi' \bar a \cdot \xi - r \psi'+ \psi \\
		& \leq g^{ij}(Du) \frac{\xi_i \xi_j}{r^2} \psi '' +\Big[ \frac{1}{r} -r + \eta \Big] \psi'+\psi.
	\end{align*}
 Let us consider functions of the form 
	\[
	\psi (r) = r^{\alpha} - b r^{\alpha-2}
	\]
with constants $2>\alpha>1$  to be chosen later  and $b=3/4$. This function is positive on the set $r>R$ provided $b< R^2$. To simplify the notations, we work with the case $b=3/4$ and imposed $R^2>3/4$; however, the following argument is also valid in general for $R^2>b>1/2$ with slightly more subtle estimates. The first and second derivatives are given by
	$
	\psi'=\alpha r^{\alpha-1} -b (\alpha-2) r^{\alpha-3}>0$
and $
	\psi''= \alpha (\alpha-1) r^{\alpha-2} - b (\alpha -2)(\alpha-3) r^{\alpha-4}$. We then have
	\[
	g^{ij}(Du)\frac{\xi_i \xi_j}{r^2} \psi'' \leq \alpha (\alpha-1) r^{\alpha-2} - \frac{1}{1+|Du|^2} b (\alpha-2)(\alpha-3) r^{\alpha-4}.
	\]
By choosing $\eps$ small enough, we can assume that $|Du|^2 \leq 1/8$. A computation gives
	\begin{multline*}
	\bar{\lin} (\psi) \leq (1-\alpha) r^{\alpha} + \eta \alpha r^{\alpha-1} + (\alpha^2 + b\alpha -3b) r^{\alpha-2} \\+ \eta b (2-\alpha) r^{\alpha-3} + b (2-\alpha) (1-\frac{8}{9}(3-\alpha))r^{\alpha-4}
	\end{multline*}
The coefficient of $r^{\alpha-2}$ is negative if $1<\alpha\leq 9/8 <\frac{-3+\sqrt{153}}{8}$ and $b=3/4$. Such a choice of $\alpha$ also guarantees that the coefficient of $r^{\alpha-4}$ is negative. Therefore	\begin{align}
	\bar{\lin} (\psi) &\leq r^{\alpha-4}\Big((1-\alpha) r^4 + \eta \alpha r^3 +  \eta \frac{3}{4} (2-\alpha) r \Big) \notag\\
		&\leq r^{\alpha-3}\Big((1-\alpha) r + 2\eta  \Big) \label{eq:linpsi}
	\end{align}
since $ \frac{3}{4} r \leq r^3$. The right hand side of \eqref{eq:linpsi} is negative if we choose $\eps$ small enough and $\alpha$ so that 
	\[
	1+\frac{4 \eta}{\sqrt 3} < \alpha <\frac{9}{8}. \qedhere
	\]
\end{proof}

\section{Remarks}
Let  $\|f \|_{C^4} \leq \eps$ and let $u$ be the unique solution to \eqref{eq:elliptic-problem} such that $u_- \leq u\leq u_+$. 

\begin{remark}
Because of uniqueness, the function $u$ has to satisfy the symmetries 
	\begin{equation}
	u(r, \theta)=-u(r, -\theta) = u(r, \pi/N -\theta) \text{ for } r >R, \theta \in [0, 2\pi)\tag{\ref{eq:symmetries-u}}.
	\end{equation}
Taking $\eps_0= \eps_2$, Theorem \ref{thm:main-1} from the introduction is then a corollary of Lemma \ref{lem:prelim} and Theorem \ref{thm:uniqueness}.
\end{remark}
\begin{remark}
From the explicit formulas in Lemma \ref{lem:subsuper}, we have
	\[
	u_- \sim u_1 - \eps^3 v_C \leq u \leq u_+ \sim u_1 + \eps^3 v_C,
	\]
where  the constant $C$ and the function $v_C$ do not depend on $\eps$. Recall that $u_1$ is the constructed solution to the linear problem $\lin u_1 =0$, $u_1|_{\pd \Omega} =f$. This implies that  $D_{r} u = D_r u_1 + O(\eps^3)$ on the boundary circle $r=R$, in other words, the radial derivative of $u$ on the circle $r=R$ is the radial derivative of the solution  to the linear problem with an error of order $\eps^3$. 
\end{remark}
\begin{remark}
Considering equation \eqref{eq:D2u-first-step} with  $|\xi| = e^{\tau} (R+1)$ gives us
	\[
	|D^2_{\xi\xi} u(\xi)| \leq \frac{C}{|\xi|}, \quad |\xi| \geq  e (R+1).
	\]
Since $|Du|$ is uniformly bounded in $\Omega$, the above estimate implies a bound on the mean curvature of the graph, and therefore on $X \cdot \nu$:
	\[
	| X \cdot \nu| =| H | \leq \frac{C}{|\xi|}, \quad |\xi| \geq  e (R+1).
	\]
The graph of the solution $u$ is therefore asymptotic to a cone at infinity. 
\end{remark}

\bibliographystyle{siam}
\bibliography{thesis}

\end{document}